\providecommand{\U}[1]{\protect\rule{.1in}{.1in}}
\newcommand{\R}{\mathbb{R}}
\DeclareMathOperator*{\argmin}{argmin}
\DeclareMathOperator*{\argmax}{argmax}
\newtheorem{thm}{Theorem}[section]
\newtheorem{prop}[thm]{Proposition}
\newtheorem{lem}[thm]{Lemma}
\newtheorem{cor}[thm]{Corollary}
\theoremstyle{definition}
\newtheorem{ass}{Assumption}
\newtheorem{ass2}{Assumption}
\newtheorem{defn}[thm]{Definition}
\theoremstyle{remark}
\newtheorem{rem}[thm]{Remark}
\numberwithin{equation}{section}
\title{Error estimates of a theta-scheme for second-order mean field games}
\date{\today}
\author{J.\@ Frédéric Bonnans$^1$, Kang Liu$^{1,2}$ and Laurent Pfeiffer$^1$}
\address{$^1$Universit{\'e} Paris-Saclay, CNRS, CentraleSup{\'e}lec, Inria, Laboratoire des signaux et syst{\`e}mes, 91190, Gif-sur-Yvette, France.}
\address{$^2$Institut Polytechnique de Paris, CNRS, Ecole Polytechnique, CMAP, 91120 Palaiseau, France.}
\email{frederic.bonnans@inria.fr, kang.liu@polytechnique.edu, laurent.pfeiffer@inria.fr}
\begin{document}


\begin{abstract}  We introduce and analyze a new finite-difference scheme, relying on the theta-method, for solving monotone second-order mean field games. These games consist of a coupled system of the Fokker-Planck and the Hamilton-Jacobi-Bellman equation. The theta-method is used for discretizing the diffusion terms: we approximate them with a convex combination of an implicit and an explicit term. On contrast, we use an explicit centered scheme for the first-order terms. Assuming that the running cost is strongly convex and regular, we first prove the monotonicity and the stability of our theta-scheme, under a CFL condition. Taking advantage of the regularity of the solution of the continuous problem, we estimate the consistency error of the theta-scheme. Our main result is a convergence rate of order $\mathcal{O}(h^r)$ for the theta-scheme, where $h$ is the step length of the space variable and $r \in (0,1)$ is related to the H\"older continuity of the solution  of the continuous problem and some of its derivatives. \end{abstract}

\maketitle

\section{Introduction}

Mean field games (MFGs), introduced in 2006 independently by J.-M.\@ Lasry and P.-L.\@ Lions in \cite{lasry2007mean} and M.\@ Huang et al. in \cite{huang2006large}, describe the asymptotic behavior of Nash equilibria in stochastic differential games, as the number of players goes to infinity. In this type of games, players have symmetric dynamics and payoff function. The latter function depends on the own strategy of a given player and on an interaction cost depending on the distribution of all players. Mean field games have important applications in various domains, like crowd motion \cite{lachapelle2010computation}, sociology, biology, macroeconomics \cite{achdou2014partial}, trade crowding \cite{cardaliaguet2018mean}, and finance.

Second-order MFGs (see \cite{lasry2007mean,achdou2010mean,cardaliaguet2010notes}) are coupled systems, including a backward Hamilton-Jacobi-Bellman (HJB) equation and a forward Fokker-Planck (FP) equation. The source term of the HJB equation depends on the solution $m$ of the FP equation while the velocity (the optimal control) $v$ in the transport term of the FP equation depends on the solution $u$ of the HJB equation. 
Under appropriate hypotheses, we can express 
$v$ as a function of $\nabla u$ at each time.
Let $\mathbb{T}^d$ be the $d$-dimensional torus and let $Q =[0,1]\times \mathbb{T}^d $.
We consider the following second-order MFG: 
\begin{equation}\label{eq:mfg}\tag{MFG}
\left\{\begin{array}{cll}
\mathrm{(i)} \ & -\partial_{t} u-\sigma \Delta u+H^c\left(t, x, \nabla u(t, x)\right) = f^c(t,x , m(t)) \quad & (t,x) \in Q, \\
\mathrm{(ii)} \ & v( t,x )=-H^c_{p}\left( t, x, \nabla u(t, x)\right) & (t, x) \in Q, \\
\mathrm{(iii)} \ & \partial_{t} m-\sigma \Delta m+\operatorname{div}(v m)=0 & (t, x) \in Q, \\
\ \mathrm{(iv)} \ \ & m( 0,x)=m^c_{0}(x), \quad u(1,x)=g^c(x) & x \in \mathbb{T}^d.
\end{array}\right.
\end{equation}
The Hamiltonian $H^c$ is related to the Fenchel conjugate of a running cost $\ell^c$:
\begin{equation}\label{eq:Hc}
H^c(t,x,p) = \sup_{v\in \mathbb{R}^d} \langle -p, v\rangle - \ell^c(t,x,v).
\end{equation}
 
We introduce in this article a theta-scheme for the discretization of \eqref{eq:mfg}; our main result states that, under suitable assumptions, the solution of the theta-scheme converges to the unique solution of \eqref{eq:mfg}. To the best of the authors' knowledge, this article is the first one, in the context of MFGs, to give a precise convergence order for a fully discrete numerical scheme, namely $\mathcal{O}(h^r)$, where $h$ is the step size of the space variable and $r \in (0,1)$ is related to regularity properties of the solution of \eqref{eq:mfg}.

Let us describe more in detail the theta-scheme which we propose.
Let us denote by $\nabla_h$, $\text{div}_h$ and $\Delta_h$ the discrete gradient, divergence and Laplace operators of the centered finite-difference scheme (precise definitions are in Section 2). Let $\theta \in [0,1]$. At any time $t$, the theta-scheme of the FP equation consists of two steps:
\begin{enumerate}
\item An explicit scheme for an intermediate FP equation, with a weight $(1-\theta)$ for the Laplacian term:
\begin{equation}\label{eq:S1} \tag{S1}
\frac{m(t+1/2)-m(t)}{\Delta t} - (1-\theta) \sigma \Delta_h m(t) + \text{div}_h( mv(t) )= 0.
\end{equation}
\item An implicit scheme for an intermediate heat equation (without divergence term):
\begin{equation}\label{eq:S2} \tag{S2}
\frac{m(t+1) - m(t+1/2)}{\Delta t} - \theta \sigma \Delta_h(m(t+1)) =0.
\end{equation}
\end{enumerate}
Notice that when there is no divergence term ($v=0$), the above scheme \eqref{eq:S1}-\eqref{eq:S2} coincides with the classical theta-scheme for the heat equation \cite{allaire2007numerical}.
For the HJB equation, we propose an adjoint scheme; at each time $t$, two steps are performed: (1) an implicit scheme for an intermediate heat equation (without the Hamiltonian term) and (2) an explicit scheme for an intermediate HJB equation. The adjoint structure of the coupled system \eqref{eq:mfg} is preserved in the resulting discretized system, which is an important property for the analysis.

\textit{Motivations of the theta-scheme}. Let us describe the main properties of the theta-scheme, which justify our interest for it. If $\theta=0$, our scheme is an explicit scheme which has a natural interpretation as a discrete mean field game. However, it is not clear whether the explicit scheme for the FP equation, when $\theta=0$, enjoys stability properties for some $\ell^2$-norm.
 To ensure stability, a natural idea consists in taking an implicit scheme for the second-order term, i.e.\@ $\theta = 1$. This yields a mixed scheme (implicit for the Laplacian term and explicit for the divergence term). We emphasize that the divergence term should remain explicit, in order to guarantee that the discrete system has a structure of a discrete MFG. When $\theta=1$, we see that \eqref{eq:S1} is an explicit scheme of a continuity equation (without diffusion term). To ensure the monotonicity of \eqref{eq:S1}, an upwind discretization for the divergence term should be employed, instead of centered scheme.
 In comparison with a centered discretization, the upwind discretization has the following disadvantages: (1) the consistency error is of a lower order, (2) we need then to construct a numerical Hamiltonian (see \cite{achdou2010mean,achdou2013mean}) to preserve the adjoint structure. Finally, we propose to take $\theta \in (1/2,1)$ in  \eqref{eq:S1}-\eqref{eq:S2} and to keep the centered scheme for the first-order term. The $\ell^2$-stability is proved in Proposition \ref{prop:energy} for the case when $\theta > 1/2$. The monotonicity property is obtained under a CFL condition \eqref{cond:CFL3}, for all $\theta<1$, see Theorem \ref{thm:equivalence}.
 We end up with a discrete system which has a structure of a discrete MFG, has a higher order for the consistency error, and which does not require the construction of a numerical Hamiltonian.

Under suitable assumptions, MFGs have a potential structure (see \cite[Def.\@ 1.1]{cardaliaguet2017learning}), i.e.\@ the system \eqref{eq:mfg} can be interpreted as the first order optimality condition of an optimal control problem of the FP equation, see \cite{lasry2007mean,lachapelle2010computation,lavigne2021}. Then some optimization algorithms can be applied to solve this optimal control problem, such as the fictitious play \cite{cardaliaguet2017learning}, the generalized conditional gradient algorithm \cite{lavigne2021}, ADMM and Chambolle-Pock's algorithm \cite{achdou2020mean,lavigne2023discrete}, etc. The last important feature of our theta-scheme is that it preserves the potential structure (when it exists), which allows the application of the previously mentioned methods directly on the discrete system. 
{These methods avoid solving a large discrete nonlinear forward-backward system. For instance, the fictitious play \cite{cardaliaguet2017learning} and the generalized conditional gradient algorithm \cite{lavigne2021} require to solve the discrete HJB and FP equations iteratively.
One significant difference between the theta-scheme and the implicit scheme proposed in \cite{achdou2013mean} is that the first-order terms in the discrete HJB and FP equations of the former are explicit. 
Thanks to this, at each time step of the discrete HJB equation, the difficulty of our method lies in solving a linear equation associated with the implicit part of the theta-scheme, which is much cheaper than solving a nonlinear algebraic equation in the totally implicit scheme \cite{achdou2013mean}.
We mention that the aforementioned linear equation to be solved is an implicit scheme of a heat equation. Consequently, in high-dimensional cases, we can consider splitting methods \cite[Sec.\@ 4.4]{thomas1995numerical} to decompose the discrete Laplace operator and reduce computational complexity.}

\textit{Related works}. In 2010, a first result concerning the convergence of a finite-difference scheme for stationary MFGs was obtained in \cite{achdou2010mean}. In this paper, the authors also proposed an implicit scheme for time-dependent MFGs and proved the existence and uniqueness of the solution of this scheme. In 2013, a convergence result was obtained for the same implicit scheme in \cite{achdou2013mean} when the Hamiltonian has a monomial form, i.e.\@ $H^c(x,p) = \mathcal{H}(x)+|p|^{\beta}$, with $\beta\in(1,+\infty)$. The two cited works assume the existence of a classical solution for \eqref{eq:mfg}. In 2016, in the absence of this existence assumption, \cite{achdou2016} proved that the solution of the implicit scheme converges to a weak solution of \eqref{eq:mfg} when the grid steps tend to zero.  No assumption on the Hamiltonian is made in \cite{achdou2016}, but a technical assumption, Assumption (g5), is required for the numerical Hamiltonian (the discrete counterpart of the Hamiltonian). An example of a numerical Hamiltonian satisfying (g5) is only presented for a Hamiltonian with a monomial form (as above), with $\beta\in (1,2]$.

Other discretization techniques have been considered in the literature.
{We mention the articles \cite{carlini2014fully,carlini2015semi} in which a semi-Lagrangian discretization is proposed for first-order and  second-order MFGs, respectiveley. The well-posedness of the resulting discrete system is established for both cases. In \cite{carlini2015semi}, the scheme's convergence is proven for non-degenerate second-order MFGs in any dimension and for degenerate second-order and first-order cases in dimension one.}
A sort of semi-Lagrangian discretization is proposed in \cite{hadikhanloo2019finite} for first-order MFGs and convergence is established in general dimension. In \cite{bertucci2022} a semi-discretization in space, with finite differences, is investigated.
It is shown that the solution of the semi-discrete master equation converges to the solution of the continuous master equation, with an explicit rate of convergence.
Finally, we cite the article \cite{achdou2020mean}, which gives a good summary of the numerical methods for MFGs.

\textit{Numerical analysis}.
In this paper, we assume that  the running cost $\ell^c$ is strongly convex with respect to the control variable. This is equivalent to the Lipschitz continuity of $\nabla H^c$ with respect to its third variable. This assumption plays a key role in the stability analysis.
We assume that the coupling function $f^c$ is Lipschitz continuous w.r.t.\@ $x$ and with respect to $m$, for the $\mathbb{L}^2$-norm. Note that our regularity assumptions on $f^c$ are stronger than those of \cite{achdou2013mean}.
We also make a monotonicity assumption for $f^c$, in Lasry and Lions' sense, see \cite[Thm.\@ 2.4]{lasry2007mean}.
This assumption ensures the uniqueness of the solution of \eqref{eq:mfg}.
For the consistency analysis, we assume that the exact solution of \eqref{eq:mfg} lies in the H\"older space $\mathcal{C}^{1+r/2,2+r}(Q)$ (see \cite[Ch.\@ 8.5]{krylov1996lectures} for the definition). 
In Appendix \ref{Appendix:B}, we provide sufficient conditions on the data for this regularity assumption to hold, for an exponent $r$ which is explicit.
We also make use of assumptions dealing with the regularity of $\ell^c$, $m_0^c$ and $g^c$.
Our convergence analysis relies on a consistency analysis and a stability analysis, the latter relies on a fundamental inequality and an energy estimate for the discrete FP equation.

\textit{Consistency analysis.} We prove that the discrete HJB equation has a consistency error of order $\mathcal{O}(\Delta t h^r)$ at each time step. For the discrete FP equation, the consistency error is the sum of two terms: one is in the form of the discrete divergence of a term of order $\mathcal{O}(\Delta th^{2r+d})$ (which can be dealt with by a discrete integration by parts formula in the convergence proof), the other one is of order $\mathcal{O}(\Delta t h^{r+d})$. In comparison with \cite{achdou2010mean,achdou2013mean}, there is no numerical Hamiltonian in our scheme. This simplifies the consistency analysis and avoids the treatment of an additional error term.

\textit{Fundamental inequality.} The fundamental inequality (Proposition \ref{prop:error_fund}) is established for a general class of discrete MFGs, for which the existence and uniqueness of a solution is easily obtained with a standard fixpoint approach.
The fundamental inequality allows us to quantify the variation of the control variable $v$ when a discrete MFG is subject to perturbations.
It is deduced from equality \eqref{eq:fund10}, which is similar to the fundamental equality proved 
in \cite[Eq.\@ 3.20]{achdou2013mean} for an implicit scheme. Our proof of the fundamental inequality also relies on the following technical lemma, given in \cite[Thm.\@ 2.1.5]{nesterov2018lectures}: If $F$ is a convex function with $L$-Lipschitz gradient, then for any $p, q$, it follows that
\begin{equation}\label{eq:nesterov}
\frac{1}{2L} \|\nabla F(p) - \nabla F(q)\|^2 \leq F(p) - F(q) - \langle \nabla F(q), p-q\rangle .
\end{equation}
We give a second proof of the fundamental inequality, which does not rely on the fundamental equality \eqref{eq:fund10}. Instead we define a ``relative'' potential function, and deduce the fundamental inequality from upper and lower bounds of this relative potential function.

\textit{Energy estimate.} We provide in Proposition \ref{prop:energy} an upper bound of the $\ell^2$-norm of the solution of the discrete FP equation under some perturbations. The proof of the energy inequality is inspired by the one for parabolic PDEs, see \cite{Liions1971,ladyvzenskaja1988linear}, and the one for the implicit scheme, see \cite{achdou2013mean}.

\textit{Numerical Hamiltonian}.
As we mentioned earlier, it is assumed in \cite{achdou2016} that the numerical Hamiltonian satisfies a specific assumption, Assumption (g5). It turns out that when the numerical Hamiltonian is convex and has a Lipschitz gradient, then (g5) can be easily deduced from inequality \eqref{eq:nesterov}, as we show in Lemma \ref{lm:numericalH}.
Using this technical result, we provide an example of a numerical Hamiltonian which satisfies all the assumptions of \cite{achdou2016}, for the case of a running cost which is strongly convex with respect to the control variable, uniformly in time and space. See Theorem \ref{thm:numericalH}. This result is of independent interest since our theta-scheme does not require the construction of a numerical Hamiltonian.

\textit{Organization of the paper}. In Section 2, we present the theta-scheme and state our main result. Section 3 is dedicated to a general class of discrete MFGs (covering the theta-scheme). We prove in this section the fundamental inequality. In Section 4, some properties of the theta-scheme are demonstrated, in particular, we prove the announced energy estimate for the FP equation. The consistency analysis and the proof of the main result are given in Section 5.

\section{The theta-scheme and the convergence result}

\subsection{Preliminaries}

The set of functions from some finite set $A$ to $\mathbb{R}$ (resp.\@ $\mathbb{R}^d$) is denoted by $\mathbb{R}(A)$ (resp.\@ $\mathbb{R}^d(A)$):
\begin{equation*}
   \mathbb{R}(A) = \{ m \colon A \rightarrow \mathbb{R} \}, \qquad \mathbb{R}^d(A) = \{ m \colon A \rightarrow \mathbb{R}^d \}.
\end{equation*}
{
Let us introduce the set of probability measures on $A$, defined by
\begin{equation*}
    \mathcal{P}(A)  = \Big\{m\in \mathbb{R}(A) \, \Big| \, \forall x\in A, \, m(x)\geq 0, \, \sum_{y\in A} m(y) =1 \Big\}.
\end{equation*}}
We denote by $\|\cdot\|$ and $\langle \cdot , \cdot
\rangle$ the Euclidean norm and the scalar product in $\mathbb{R}^n$.
We define below a scalar product and a norm for functions defined on a finite set.

\begin{defn}
Let $n \in \mathbb{N}_{+}$ and let $A_1$ and $A_2$ be two finite sets. For any $\mu, \nu\in \mathbb{R}^n(A_1)$ and $p\in[1,\infty)$, we define
\begin{equation*}
 \langle \mu, \nu \rangle = \sum_{x\in A_1} \langle \mu(x), \nu(x) \rangle ; \qquad  \|\mu\|_p = \Big(\sum_{x\in A_1} \|\mu(x)\|^p \Big)^{1/p} ;\qquad\|\mu\|_{\infty} = \max_{x\in A_1} \|\mu(x)\|.
\end{equation*}
For any $\mu \in \mathbb{R}^n(A_1\times A_2)$ and $p_1, p_2 \in [1,\infty]$, we define
\begin{equation*}
\|\mu\|_{p_1,p_2} = \Big\| \big( \|\mu(x,\cdot)\|_{p_2} \big)_{x \in A_1} \Big\|_{p_1} = \begin{cases}
        \big(\sum_{x\in A_1} \|\mu(x,\cdot)\|_{p_2}^{p_1} \big)^{1/p_1}, \quad &\text{if } p_1\in [1,\infty),\\[0.5em]
        \max_{x\in A_1} \|\mu(x,\cdot)\|_{p_2}, & \text{if } p_1=\infty.
\end{cases}
\end{equation*}
\end{defn} 

\begin{lem}[H\"{o}lder's inequality]
Let $\mu, \nu  \in \mathbb{R}^n(A_1\times A_2)$. Then,
\begin{equation*}
    \sum_{x_1\in A_1} \sum_{x_2\in A_2}\Big|\langle \mu(x_1,x_2) , \nu(x_1,x_2) \rangle \Big| \leq \|\mu\|_{p_1,p_2} \|\nu\|_{p_1^{*},p_2^{*}},
\end{equation*}
 where $p_i \in [1,\infty]$ and $1/p_i + 1/ p_i^{*} = 1$,  for $i=1,2$.
\end{lem}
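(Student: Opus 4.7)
The plan is to reduce the statement to the scalar (one-variable) H\"older inequality, which I will assume as known, applied twice --- once in each coordinate. First I would pass to the pointwise bound $|\langle \mu(x_1,x_2),\nu(x_1,x_2)\rangle| \leq \|\mu(x_1,x_2)\|\,\|\nu(x_1,x_2)\|$, which is just Cauchy--Schwarz in $\mathbb{R}^n$. After this step all remaining quantities are non-negative scalars and the vector dimension $n$ plays no role.

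Next, for each fixed $x_1\in A_1$, I would apply the scalar H\"older inequality in the variable $x_2$, with conjugate exponents $p_2$ and $p_2^*$, to the non-negative functions $x_2\mapsto \|\mu(x_1,x_2)\|$ and $x_2\mapsto \|\nu(x_1,x_2)\|$. This produces the intermediate bound
\begin{equation*}
\sum_{x_2\in A_2} \|\mu(x_1,x_2)\|\,\|\nu(x_1,x_2)\| \leq \|\mu(x_1,\cdot)\|_{p_2}\,\|\nu(x_1,\cdot)\|_{p_2^*}.
\end{equation*}
Summing over $x_1$ and applying the scalar H\"older inequality a second time, this time in $x_1$ with conjugate exponents $p_1$ and $p_1^*$ to the two non-negative functions $x_1\mapsto \|\mu(x_1,\cdot)\|_{p_2}$ and $x_1\mapsto \|\nu(x_1,\cdot)\|_{p_2^*}$, the right-hand side becomes exactly $\|\mu\|_{p_1,p_2}\,\|\nu\|_{p_1^*,p_2^*}$ by the very definition of the mixed norm given just above the lemma.

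The argument is straightforward and finite (no convergence issues since $A_1,A_2$ are finite); the only minor obstacle is bookkeeping at the boundary exponents. One has to verify the four cases where $p_1$ and/or $p_2$ equals $1$ or $\infty$, using the convention $1/\infty=0$ together with the endpoint version of the scalar H\"older inequality, namely $\sum_y a(y)b(y)\leq (\max_y a(y))\sum_y b(y)$ when one of the exponents is $\infty$. With these conventions the two applications above remain valid, and the stated inequality follows.
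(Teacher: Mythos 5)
Your argument is correct: Cauchy--Schwarz pointwise in $\mathbb{R}^n$, then the scalar H\"older inequality applied first in $x_2$ (for each fixed $x_1$) and then in $x_1$, with the usual conventions at the endpoint exponents, gives exactly the stated bound with the mixed norms as defined just above the lemma. The paper omits the proof entirely, treating this as a standard fact, and your iterated-H\"older argument is precisely the standard way to establish it.
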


We make use of Nemytskii operators, in order to alleviate some notations.

\begin{defn}[Nemytskii operators]
Let $\zeta \colon \mathcal{X}\times \mathcal{Y} \rightarrow \mathcal{Z}$ and let $u\colon \mathcal{X} \rightarrow \mathcal{Y}$. Then, the associated Nemytskii operator is the mapping $\zeta[u]$, defined from $\mathcal{X}$ to $\mathcal{Z}$ by
\begin{equation*}
    \zeta[u](x) = \zeta(x,u(x)).
\end{equation*}
\end{defn}

\subsection{Notations for the finite-difference scheme}

The time step is $\Delta t= 1/T$,  for $T\in \mathbb{N}_{+}$. We assume that $T>1$. The set of time indices is denoted by $\mathcal{T}$ ($\tilde{\mathcal{T}}$ when the final time $T$ is included):
\begin{equation}\label{T}
\mathcal{T} = \{0,1,\ldots,T-1\} ; \qquad \mathcal{\tilde{T}} = \{0,1,\ldots, T\}.
\end{equation}
Let $S$ be the uniform discretization of the torus $\mathbb{T}^d$ with step size $h = 1/N$, for $N\in \mathbb{N}_{+}$, defined by
\begin{equation}\label{S}
    S = \big\{ (i_1,i_2,\ldots,i_d) h \; \mid \;  i_1,\ldots,i_d \in \mathbb{Z}/N\mathbb{Z} \big\}.
\end{equation}
 Let $(e_i)_{i=1,\ldots,d}$ be the natural canonical basis of $\mathbb{R}^d$. The discrete Laplace, gradient, and divergence operators for the centered finite-difference scheme are defined as follows:
\begin{align*}
    & \Delta_h \mu(x) = \sum_{i=1}^d \frac{  \mu(x+he_i) + \mu(x-he_i)  -2 \mu(x)}{h^2}, & \forall \mu\in \mathbb{R}(S), \; \forall \; x\in S,\\
    & \nabla_h \mu(x) = \Big( \frac{\mu(x+he_i)- \mu(x-he_i)}{2h} \Big)_{i=1}^d, & \forall \mu\in \mathbb{R}(S), \; \forall \; x\in S,\\
    & \text{div}_h \omega (x) = \sum_{i=1}^d \frac{\omega_i(x+he_i) - \omega_i(x-he_i)}{2h}, & \forall \; \omega\in \mathbb{R}^d(S), \; \forall \; x \in S, 
\end{align*}
where $\omega_i$ is the $i^{\text{th}}$ coordinate of $\omega$. The forward discrete gradient is defined by
\begin{equation} \label{eq:forward_grad}
    \nabla_h^{+} \mu(x) = \Big( \frac{\mu(x+he_i)- \mu(x)}{h} \Big)_{i=1}^d, \qquad \forall \mu\in \mathbb{R}(S), \; \forall  x\in S.
\end{equation}

\begin{lem}[Integration by parts formula]\label{lm:int_by_part}
For any $\omega\in \mathbb{R}^d(S)$ and for any $\mu, \nu \in \mathbb{R}(S)$, it holds that
\begin{align} \label{eq:int_by_part1}
         & - \sum_{x\in S} \mu(x) \textnormal{div}_h \omega (x)   = \sum_{x\in S}  \left\langle  \nabla_h {\mu}(x)  , {\omega}(x)\right\rangle ; \\
      \label{eq:int_by_part2}
        & - \sum_{x\in S} \nu (x) \Delta_h \mu (x) = \sum_{x \in S} \left\langle \nabla^{+}_h \nu(x), \nabla^{+}_h \mu(x)  \right\rangle .
\end{align}
\end{lem}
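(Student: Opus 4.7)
The plan is to establish both identities by summation-by-parts on the discrete torus, exploiting the fact that, because $S$ is identified with $(\mathbb{Z}/N\mathbb{Z})^d$, each translation $x \mapsto x \pm h e_i$ is a bijection of $S$. This means every sum of the form $\sum_{x \in S} \varphi(x \pm h e_i)$ can be reindexed to $\sum_{x \in S} \varphi(x)$ with no boundary contribution, which is the only ingredient needed.

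For identity \eqref{eq:int_by_part1}, I would first plug in the definition of $\mathrm{div}_h$ and exchange the sums over $x$ and over the coordinate index $i$. For each $i$, I would split the double sum into two pieces, one involving $\omega_i(x+he_i)$ and one involving $\omega_i(x-he_i)$, and reindex so that the argument of $\omega_i$ becomes $x$. Concretely, $\sum_{x} \mu(x) \omega_i(x + h e_i) = \sum_x \mu(x - h e_i) \omega_i(x)$ and similarly with the opposite sign. After regrouping, the bracket $[\mu(x+he_i) - \mu(x-he_i)]/(2h)$ appears as a factor of $\omega_i(x)$, which is exactly the $i$-th component of $\nabla_h \mu(x)$. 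Summing back over $i$ yields the inner product $\langle \nabla_h \mu(x), \omega(x) \rangle$ on the right-hand side.

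For identity \eqref{eq:int_by_part2}, the cleanest route is either (a) the direct approach, expanding $\Delta_h \mu$ coordinate-wise and reindexing both the $\mu(x+he_i)$ and $\mu(x-he_i)$ terms so that $\mu$ sits at argument $x$; this produces $\sum_x \mu(x)\,[2\nu(x) - \nu(x + h e_i) - \nu(x - h e_i)]/h^2$, and an identical expansion of the right-hand side (after using $\sum_x \nu(x+he_i)\mu(x+he_i) = \sum_x \nu(x)\mu(x)$) gives the same expression. Or (b) the structural approach: introduce the backward gradient $\nabla_h^- \mu(x)_i = (\mu(x) - \mu(x-he_i))/h$ and the associated backward divergence, observe that $\Delta_h = \mathrm{div}_h^{-} \nabla_h^{+}$, and then apply the analogous periodic summation-by-parts (which pairs $\nabla_h^+$ with $\mathrm{div}_h^-$) with $\omega = \nabla_h^+ \mu$. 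I would likely write approach (a) since it avoids introducing new operators that are not used elsewhere in the statement.

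There is no real obstacle here; the lemma is a routine consequence of periodicity, and the only thing to be careful about is bookkeeping the sign: both the $+he_i$ and $-he_i$ shifts must be reindexed in the same pass, and one should verify that the resulting combination is indeed $(\nabla_h\mu)_i$ (for \eqref{eq:int_by_part1}) or $-h^2(\nabla_h^+ \nu)_i (\nabla_h^+ \mu)_i$ up to the expected sign (for \eqref{eq:int_by_part2}). Once the reindexing is done for one coordinate, the full identity follows by summing over $i = 1,\dots,d$.
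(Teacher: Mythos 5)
Your proposal is correct and follows essentially the same route as the paper: both identities are obtained by reindexing the sums using the translation invariance of $S \cong (\mathbb{Z}/N\mathbb{Z})^d$, which is indeed the only ingredient needed. The only cosmetic difference is in \eqref{eq:int_by_part2}, where the paper regroups the left-hand side directly into forward differences rather than symmetrizing onto $\nu$ and expanding both sides to a common expression.
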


The proof is given in the Appendix \ref{Appendix:A}. 

\begin{lem} \label{lem:easy_ineq}
For any $\mu\in \mathbb{R}(S)$, the following inequality holds:
\begin{equation}\label{eq:dmu+}
 \|\nabla_h \mu\|_2 ^2 \leq  \|\nabla_h^{+} \mu\|_2 ^2 .
\end{equation}
\end{lem}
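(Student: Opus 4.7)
The plan is to relate each component of the centered gradient at a point $x$ to the average of two adjacent values of the forward gradient, and then apply a Jensen-type inequality combined with translation invariance of the sum over the torus.

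First I would rewrite, for each $i\in\{1,\dots,d\}$ and each $x\in S$, the centered difference as
\begin{equation*}
\frac{\mu(x+he_i)-\mu(x-he_i)}{2h} = \frac{1}{2}\Bigl(\frac{\mu(x+he_i)-\mu(x)}{h} + \frac{\mu(x)-\mu(x-he_i)}{h}\Bigr),
\end{equation*}
i.e., denoting $a_i(x) = (\mu(x+he_i)-\mu(x))/h$ the $i$-th component of $\nabla_h^{+}\mu(x)$, the $i$-th component of $\nabla_h \mu(x)$ equals $\tfrac12\bigl(a_i(x)+a_i(x-he_i)\bigr)$.

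Next, I would square and apply the elementary inequality $\left(\tfrac{a+b}{2}\right)^2 \leq \tfrac{a^2+b^2}{2}$ (a direct consequence of the convexity of $t\mapsto t^2$) to obtain, after summing over $i$ and $x$,
\begin{equation*}
\|\nabla_h \mu\|_2^2 = \sum_{x\in S}\sum_{i=1}^d \Bigl(\frac{a_i(x)+a_i(x-he_i)}{2}\Bigr)^2 \leq \sum_{x\in S}\sum_{i=1}^d \frac{a_i(x)^2 + a_i(x-he_i)^2}{2}.
\end{equation*}

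Finally, since $S$ is the discrete torus, the shift $x \mapsto x-he_i$ is a bijection of $S$, so $\sum_{x\in S} a_i(x-he_i)^2 = \sum_{x\in S} a_i(x)^2$ for each $i$. Therefore the right-hand side collapses to $\sum_{x,i} a_i(x)^2 = \|\nabla_h^{+}\mu\|_2^2$, which yields \eqref{eq:dmu+}. There is no real obstacle here: the only thing to be careful about is invoking the periodicity of $S$ to justify the index shift, which is why the statement is specific to the torus setting defined in \eqref{S}.
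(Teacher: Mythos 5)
Your proof is correct and is essentially identical to the paper's: both decompose the centered difference into the sum of the two adjacent forward differences, apply the convexity inequality $(a+b)^2 \leq 2a^2+2b^2$ (equivalently $\bigl(\tfrac{a+b}{2}\bigr)^2 \leq \tfrac{a^2+b^2}{2}$), and use the periodicity of $S$ to re-index the shifted sum. You simply make the final index-shift step more explicit than the paper does.
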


The proof is given in the Appendix \ref{Appendix:A}. 
The following lemma shows some general properties of the implicit scheme associated with the heat equation $    \frac{\partial m}{\partial t} - c \Delta m = 0$, used in our theta-scheme.

\begin{lem} \label{lm:implicit}
Let $X\in \mathbb{R}^{|S|}$. Consider the scheme
\begin{equation}\label{eq:implicit}
    \frac{Y(x) - X(x)}{\Delta t} - c \Delta_h Y(x) =0 , \qquad \forall \; x \in S,
\end{equation}
with unknown $Y \in \R^{|S|}$. The following holds true.
\begin{enumerate}
    \item (Existence and uniqueness) The scheme \eqref{eq:implicit} has a unique solution $Y$.
    \item (Monotonicity) If $X\geq 0$, then $Y\geq 0$. Moreover, if $X\in\mathcal{P}(S)$, then $ Y\in \mathcal{P}(S)$.
    \item (Lipschitz constant) If $X$ is $L$-Lipschitz, then $Y$ has the same Lipschitz constant $L$.
    \item (Continuity of the discrete gradient and Laplacian) Suppose that $\Delta_h X$ is $\alpha$-H\"older continuous with constant $L'$, where $0< \alpha \leq 1$. Then there exists a constant $C$, independent of $\Delta t$ and $h$, such that
\begin{equation*}
        \|\nabla_h X - \nabla_h Y\|_{\infty} \leq C \Delta t h^{ \alpha -1}, \qquad \|\Delta_h X - \Delta_h Y\|_{\infty} \leq C \Delta t h^{\alpha -2}.
\end{equation*}
\end{enumerate}
\end{lem}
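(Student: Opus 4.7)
My approach is to reformulate the scheme in operator form $\mathcal{A} Y = X$ with $\mathcal{A} := I - c\Delta t\,\Delta_h$ and derive all four claims from three ingredients: the positive semidefiniteness of $-\Delta_h$ (a consequence of \eqref{eq:int_by_part2}), a discrete maximum principle for $\mathcal{A}$, and the translation invariance of the scheme. The maximum principle I will invoke repeatedly is: if $\mathcal{A} Z = F$, then at a maximizer $x_0$ of $Z$ one has $\Delta_h Z(x_0) \leq 0$, so $Z(x_0) \leq F(x_0)$; applied at a minimizer as well, this gives $\|Z\|_\infty \leq \|F\|_\infty$.

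For item~(1), integration by parts yields $\langle -\Delta_h \mu,\mu\rangle = \|\nabla_h^+ \mu\|_2^2 \geq 0$, so $\mathcal{A}$ is symmetric positive definite, hence invertible. For item~(2), the min-principle variant applied to \eqref{eq:implicit} itself (at a minimizer $x_0$ of $Y$, $Y(x_0) \geq X(x_0) \geq 0$) gives nonnegativity; summing \eqref{eq:implicit} over $S$ and noting $\sum_{x\in S}\Delta_h Y(x) = 0$ (via \eqref{eq:int_by_part2} with $\nu \equiv 1$) gives mass preservation. For item~(3), translation invariance means that $Y_\tau(x) := Y(x+\tau)$ solves $\mathcal{A} Y_\tau = X_\tau$; by linearity $Z := Y - Y_\tau$ satisfies $\mathcal{A} Z = X - X_\tau$, and the maximum principle yields $\|Y - Y_\tau\|_\infty \leq \|X - X_\tau\|_\infty \leq L|\tau|$, so $Y$ inherits the Lipschitz constant $L$.

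Item~(4) is the main obstacle. The plan is to set $W := Y - X$, which satisfies $\mathcal{A} W = c\Delta t\,\Delta_h X$, and to apply the maximum principle to discrete differentiations of this identity, exploiting the fact that $\Delta_h$ commutes with translations and with itself. For the gradient bound I introduce $D_i W(x) := W(x + h e_i) - W(x - h e_i)$; since $D_i$ commutes with $\mathcal{A}$, one has $\mathcal{A}(D_i W) = c\Delta t\,D_i \Delta_h X$, and the $\alpha$-H\"older hypothesis gives $\|D_i \Delta_h X\|_\infty \leq L' (2h)^\alpha$. The maximum principle then yields $\|D_i W\|_\infty \leq c\Delta t\, L'(2h)^\alpha$, and after dividing by $2h$ (the normalization in $\nabla_h$) the bound $\|\nabla_h W\|_\infty \leq C\Delta t\, h^{\alpha-1}$ follows. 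For the Laplacian bound I apply $\mathcal{A}$ to $\Delta_h W$ directly: $\mathcal{A}(\Delta_h W) = c\Delta t\,\Delta_h(\Delta_h X)$. Each second finite difference $\mu(x+he_i)+\mu(x-he_i)-2\mu(x)$ with $\mu := \Delta_h X$ is bounded by $2L' h^\alpha$ by H\"older continuity, giving $\|\Delta_h^2 X\|_\infty \leq 2dL' h^{\alpha-2}$ and hence $\|\Delta_h W\|_\infty \leq C\Delta t\, h^{\alpha-2}$.

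The delicate point throughout~(4) is to propagate the H\"older regularity of $\Delta_h X$ through the two further discrete derivatives while retaining the compensating factor $\Delta t$ coming from the right-hand side of the equation for $W$; everything else reduces to elementary maximum-principle and integration-by-parts manipulations that are already available from Lemma~\ref{lm:int_by_part}.
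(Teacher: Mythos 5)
Your proof is correct, but it follows a genuinely different route from the paper's. The paper rewrites \eqref{eq:implicit} as a fixed-point equation $Y=\mathbb{S}_X(Y)$ for the averaging map $\mathbb{S}_X(\mu)(x)=\tfrac{1}{1+2dr}\bigl(r\sum_j \mu(x+he_j)+r\sum_j\mu(x-he_j)+X(x)\bigr)$ with $r=c\Delta t/h^2$, proves that $\mathbb{S}_X$ is a $\tfrac{2dr}{1+2dr}$-contraction for $\|\cdot\|_\infty$, and obtains items (1)--(3) by checking that the iteration preserves nonnegativity, total mass, and the Lipschitz constant; item (4) is then obtained by applying the same contraction to $\omega=(\nabla_h X)_i$ and summing the geometric series, $\|\bar\omega-\omega\|_\infty\le\tfrac{1}{1-\gamma}\|\mathbb{S}_\omega(\omega)-\omega\|_\infty$. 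You instead work directly with $\mathcal{A}=I-c\Delta t\,\Delta_h$: invertibility from positive definiteness via \eqref{eq:int_by_part2}, items (2)--(4) from the discrete maximum principle combined with translation invariance, and in item (4) the key identity $\mathcal{A}W=c\Delta t\,\Delta_h X$ for $W=Y-X$, to which you apply $D_i$ and $\Delta_h$ using commutation with $\mathcal{A}$. The two arguments are close in spirit --- the contraction estimate is essentially an iterated form of your maximum principle, and both extract the factor $\Delta t$ from the same source (the residual $\mathbb{S}_\omega(\omega)-\omega$ in the paper, the right-hand side $c\Delta t\,\Delta_h X$ in yours) --- but your version avoids the iteration entirely and isolates the structural facts (positivity of $-\Delta_h$, maximum principle, commutation with translations) more transparently, at the cost of not being constructive. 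All steps check out, including the constants: $\|D_i\Delta_h X\|_\infty\le L'(2h)^\alpha$ and $\|\Delta_h^2X\|_\infty\le 2dL'h^{\alpha-2}$ give bounds uniform in $\Delta t$ and $h$, matching the paper's.
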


The proof is given in the Appendix \ref{Appendix:A}. 

\subsection{The theta-scheme and the main result}

We describe the MFG system of interest.
Let us fix a running cost $\ell^c$, a coupling cost $f^c$, an initial condition $m_0^c$ and a terminal cost $g$, where
\begin{align*}
\ell^c \colon Q \times \R^d \rightarrow \R,
\qquad
f^c \colon Q \times \mathcal{D} \rightarrow \R,
\qquad
m_0^c \in \mathcal{D},
\qquad
g^c \colon \mathbb{T}^d \rightarrow \R,
\end{align*}
and where the set $\mathcal{D}$ is defined by
$\mathcal{D} =  \big\{ \mu \in \mathbb{L}^2(\mathbb{T}^d) \, \vert \, \mu\geq 0, \int_{\mathbb{T}^d} \mu(x) dx =1  \big\}$.
Recall the formulation of the continuous mean field game:
\begin{equation*}
\left\{\begin{array}{cll}
\mathrm{(i)} \ & -\partial_{t} u-\sigma \Delta u+H^c\left(t, x, \nabla u(x, t)\right) = f^c(t,x , m(t)) & (t,x) \in Q, \\
\mathrm{(ii)} \ & v( t,x )=-H^c_{p}\left( t, x, \nabla u(x, t)\right) & (t, x) \in Q, \\
\mathrm{(iii)} \ & \partial_{t} m-\sigma \Delta m+\operatorname{div}(v m)=0 & (t, x) \in Q, \\
\ \mathrm{(iv)} \ \ & m( 0,x)=m^c_{0}(x), \quad u(1,x)=g^c(x) & x \in \mathbb{T}^d,
\end{array}\right.
\end{equation*}
where $H^c(t,x,p) = \sup_{v\in \mathbb{R}^d} \langle -p, v\rangle - \ell^c(t,x,v)$. 
We make the following assumptions on the data functions.

\begin{ass}\label{ass:continuous}
\begin{enumerate}
\item \emph{Regularity}. The running cost $\ell^c$ is continuously differentiable with respect to $v$. There exist positive constants $L_{\ell}^c$, $L_g^c$, and $L_f^c$ such that for any $(t,x) \in Q$, for any $v \in \R^d$, and for any $m \in \mathcal{D}$,
\begin{itemize}
\item $\ell^c(\cdot,x,v)$, $\ell^c(t,\cdot,v)$, and $\ell_v^c(\cdot,x,v)$ are $L_{\ell}^c$-Lipschitz continuous
\item $g^c$ is $L_g^c$-Lipschitz continuous
\item $f^c(\cdot,x,m)$, $f^c(t,\cdot,m)$, and $f^c(t,x,\cdot)$ are $L_f^c$-Lipschitz continuous (with respect to the $\| \cdot \|_{\mathbb{L}^2}$-norm for the third variable).
\end{itemize}
\item \emph{Strong convexity}. There exists $\alpha > 0$ such that for any $(t,x) \in Q$, $\ell^c(t,x,\cdot)$ is strongly convex with modulus $\alpha^c$, i.e.
\begin{equation*}
 \ell^c(t,x,v_2) \geq \ell^c(t,x,v_1) + \langle \ell_v^c(t,x,v_1) , v_2-v_1\rangle + \frac{\alpha^c}{2}\|v_2-v_1\|^2,
 \quad
 \forall v_1, v_2 \in \R^d.
\end{equation*}
\item \emph{Monotonicity.} The global cost $f^c$ is monotone, i.e., for any $t \in [0,T]$, for any $m_1$ and $m_2 \in \mathcal{D}$,
\begin{equation*}
        \int_{ \mathbb{T}^d} \Big( f^c(t,x',m_1) - f^c(t,x',m_2)\Big) \big(m_1(x') - m_2(x')\big) dx' \geq 0.
\end{equation*}
\end{enumerate}
\end{ass}

\begin{lem} \label{lm:l}
Let Assumption \ref{ass:continuous} hold true. Then  $H^c$ is continuously differentiable with respect to $p$ and $H^c_p$ is $(1/\alpha)$-Lipschitz continuous with respect to $p$.
Moreover, $H^c$ and $H^c_p$ are respectively $L_{\ell}^c$- and $(L_{\ell}^c/\alpha)$-Lipschitz continuous with respect to $t$.
\end{lem}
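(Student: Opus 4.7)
The plan is to use convex duality: for fixed $(t,x)$, the map $v\mapsto \ell^c(t,x,v)$ is continuously differentiable and strongly convex with modulus $\alpha$, hence coercive. Therefore the supremum in the definition of $H^c(t,x,p)$ is attained at a unique maximizer $v^*(t,x,p)$, characterized by the first-order optimality condition $\ell_v^c(t,x,v^*(t,x,p)) = -p$. Danskin's theorem then yields that $H^c(t,x,\cdot)$ is continuously differentiable with $H^c_p(t,x,p) = -v^*(t,x,p)$, so the claimed regularity of $H^c$ in $p$ will follow from regularity of $v^*$ in $p$.

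For the $(1/\alpha)$-Lipschitz estimate in $p$, I would fix $p_1,p_2$, set $v_i := v^*(t,x,p_i)$, and apply the cross-monotonicity form of strong convexity, $\langle \ell_v^c(t,x,v_1)-\ell_v^c(t,x,v_2),\, v_1-v_2\rangle \geq \alpha\|v_1-v_2\|^2$. The first-order condition rewrites the left-hand side as $\langle p_2-p_1,\, v_1-v_2\rangle$, which Cauchy--Schwarz bounds by $\|p_1-p_2\|\,\|v_1-v_2\|$, so $\|v_1-v_2\|\leq (1/\alpha)\|p_1-p_2\|$ and hence $\|H^c_p(t,x,p_1)-H^c_p(t,x,p_2)\|\leq (1/\alpha)\|p_1-p_2\|$.

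For Lipschitz continuity in $t$: given $t_1,t_2$ and the corresponding maximizers $v_i^* := v^*(t_i,x,p)$, the function $v_1^*$ is feasible in the $t_2$-problem, giving
\begin{equation*}
H^c(t_2,x,p) \geq \langle -p, v_1^*\rangle - \ell^c(t_2,x,v_1^*) = H^c(t_1,x,p) + \ell^c(t_1,x,v_1^*) - \ell^c(t_2,x,v_1^*),
\end{equation*}
so the $L^c_\ell$-Lipschitz continuity of $\ell^c$ in $t$ (together with the symmetric inequality) implies $|H^c(t_1,x,p)-H^c(t_2,x,p)|\leq L^c_\ell |t_1-t_2|$. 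For $H^c_p$, the first-order conditions yield the identity $\ell_v^c(t_1,x,v_1^*) = \ell_v^c(t_2,x,v_2^*) = -p$; applying strong convexity of $\ell^c(t_2,x,\cdot)$ and substituting this identity on the right,
\begin{equation*}
\alpha\|v_1^*-v_2^*\|^2 \leq \langle \ell_v^c(t_2,x,v_1^*)-\ell_v^c(t_2,x,v_2^*),\, v_1^*-v_2^*\rangle = \langle \ell_v^c(t_2,x,v_1^*)-\ell_v^c(t_1,x,v_1^*),\, v_1^*-v_2^*\rangle,
\end{equation*}
and then the $L^c_\ell$-Lipschitz continuity of $\ell_v^c$ in $t$ bounds the right-hand side by $L^c_\ell|t_1-t_2|\,\|v_1^*-v_2^*\|$, whence $\|H^c_p(t_1,x,p)-H^c_p(t_2,x,p)\|=\|v_1^*-v_2^*\|\leq (L^c_\ell/\alpha)|t_1-t_2|$.

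There is no substantial obstacle here: the only point deserving care is justifying that $v^*$ is well-defined (coercivity from strong convexity plus continuity of $\ell^c$) and invoking Danskin correctly to obtain the identity $H^c_p = -v^*$; the four Lipschitz bounds then reduce to the strong-convexity inequality combined with the Lipschitz hypotheses on $\ell^c$ and $\ell_v^c$ furnished by Assumption \ref{ass:continuous}.
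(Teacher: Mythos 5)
Your proposal is correct and follows essentially the same route as the paper: the $t$-Lipschitz bound on $H^c$ via comparing suprema, and the $(L_\ell^c/\alpha)$-bound on $H^c_p$ via the first-order condition $\ell_v^c(t,x,v^*)=-p$ combined with strong convexity and the Lipschitz continuity of $\ell_v^c$ in $t$, are exactly the paper's arguments. The only difference is that the paper delegates the differentiability of $H^c$ in $p$ and the $(1/\alpha)$-Lipschitz continuity of $H^c_p$ in $p$ to a citation of Hiriart-Urruty--Lemar\'echal, whereas you prove these directly via Danskin and the cross-monotonicity form of strong convexity; both steps are sound.
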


The proof is given in Appendix \ref{Appendix:A}. 
Following \cite[page 117]{krylov1996lectures}, we introduce the following spaces. Given $r \in (0,1)$, $\mathcal{C}^{r/2, r}(Q)$ denotes the set of real-valued functions over $Q$ which are H\"older continuous with exponent $r$ (resp.\@ $r/2$) with respect to $x$ (resp.\@ $t$). We denote by $\mathcal{C}^{1+r/2, 2+r}(Q)$ the set of real-values functions $Q$ which are such that $m$, $\partial_t m$, $\partial_{x_i} m$, $\partial_{x_i x_j} m$ lie in $\mathcal{C}^{r/2, r}(Q)$, for any $i,j=1,\ldots d$.

We make the following assumption on the solution of \eqref{eq:mfg}.

\begin{ass}\label{ass:sol+}
The continuous mean field game \eqref{eq:mfg} has a unique solution $(u^{*}, v^{*}, m^{*})$, with $u^{*}, m^{*} \in \mathcal{C}^{1+r/2,2+r}(Q) $ and $v^{*}\in \mathcal{C}^r(Q) \cap \mathbb{L}^{\infty}([0,1]; \mathcal{C}^{1+r}(\mathbb{T}^d))$, where $r\in (0,1)$.
\end{ass}

In Appendix \ref{Appendix:B}, we propose a set of regularity assumptions on $\ell^c$, $f^c$, $m_0^c$ and $g^c$ (Assumption \ref{ass:regular}). We show in Theorem \ref{thm:regular} that Assumptions \ref{ass:continuous} and \ref{ass:regular} together imply the Assumption \ref{ass:sol+}, for an explicit value of $r$.

\smallskip

\textbf{Assumptions \ref{ass:continuous} and
\ref{ass:sol+} are supposed to be satisfied throughout the article}.

\smallskip

Let us now discretize the data functions.
Let us define $B_h(x) = \prod_{i=1}^d [x-he_i/2,\, x +he_i/2)$.
We introduce two operators $\mathcal{I}_h \colon \mathbb{R}(\mathbb{T}^d) \rightarrow \mathbb{R}(S)$ and $\mathcal{R}_h \colon \mathbb{R}(S) \rightarrow \mathbb{R}(\mathbb{T}^d)$, defined as follows: For any $m^c\in \mathbb{R}(\mathbb{T}^d)$ and for any $ m \in \mathbb{R}(S)$, 
\begin{equation}
\label{eq:operatorIh}
\begin{array}{rll}
    \mathcal{I}_h (m^c) (x) &\! \! \! \! = {\displaystyle \int_{B_h(x)}} m^c(y)dy,  \quad &\forall 
    x\in S;\\[1.5em]
    \mathcal{R}_h (m) (y) & \! \! \! \! = {\displaystyle \frac{m(x)}{h^d} }, & \forall x\in S, \; y\in B_h(x).
    \end{array}
\end{equation}
The discrete counterparts of the data functions $\ell^c$, $ H^c$, $m_0^c$, and $g^c$ are the functions defined as follows: For any $t\in \tilde{\mathcal{T}}$, $x\in S$ and $p\in \mathbb{R}^d$,
\begin{equation}\label{eq:grid1}
\begin{array}{ll}
   \ell(t,x,p) = \ell^c(t\Delta t, x,p), \qquad &  H (t,x, p) = H^c(t\Delta t,x,p), \\[0.5em]
   m_0(x) = \mathcal{I}_h (m_0^c)(x), \quad & g(x) = g^c(x).
\end{array}
\end{equation}
The discrete counterpart of $f^c$ is the function $f \colon \mathcal{T}\times S\times \mathbb{R}(S)$ to $\mathbb{R}$ defined by
\begin{equation}\label{eq:grid2}
    f(t,x,m) = \frac{1}{h^d}\int_{y\in B_h(x)} f^c\Big(t\Delta t,y,\mathcal{R}_h(m) \Big) dy.
\end{equation}
Taking any $\theta\in [0,1]$, we introduce the theta-scheme of \eqref{eq:mfg}: find $(u,v,m) \in  \R(\bar{\mathcal{T}} \times S) \times \R^d(\mathcal{T} \times S) \times \R(\bar{\mathcal{T}} \times S)$ such that $\forall (t,x)\in \mathcal{T}\times S$,
\begin{equation}\label{eq:theta_mfg2}\tag{$\theta$-MFG}
\left\{\begin{array}{cll}
\mathrm{(i)}  & \begin{cases}
        -\frac{u(t+1,x) - u(t+1/2,x)}{\Delta t} - \theta \sigma \Delta_h u(t+1/2,x) = 0 , \\[0.5em]
         -\frac{u(t+1/2,x) - u(t,x)}{\Delta t} - (1-\theta)\sigma \Delta_h u(t+1/2, x) +  H [\nabla_h u ( \cdot +1/2 , \cdot ) ](t,x)= f(t,x,m(t)); 
\end{cases}\\
~\\
\mathrm{(ii)}  & v(t,x) = -H_p[\nabla_h u(\cdot+1/2,\cdot)](t,x);\\
~\\
\mathrm{(iii)} &
\begin{cases}
        \frac{m(t+1/2.x) - m(t,x)}{\Delta t} -  (1-\theta)\Delta_h m(t,x) + \text{div}_h (vm)(t,x) = 0 ,  \\[0.5em]
            \frac{m(t+1,x) - m(t+1/2,x)}{\Delta t} - \theta \sigma \Delta_h m(t+1,x) = 0;
    \end{cases} \\
~\\
\mathrm{(iv)} \ & m(0,x)=m_{0}(x), \quad u(T,x)=g(x).
\end{array}\right.
\end{equation}
Denoting  $B_1 = \text{Id} - \theta \sigma\Delta t \Delta_h$, the first equation in the dynamic programming equation can be rewritten as follows: $B_1 u(t+1/2,\cdot)= u(t+1,\cdot)$. By Lemma \ref{lm:implicit}, $B_1$ is invertible.
This allows us to consider $u(t+1/2,\cdot)$ as an auxiliary variable, uniquely determined by $u(t+1,\cdot)$, and thus to regard the unknown value function $u$ of the theta-scheme as an element of $\R(\bar{\mathcal{T}}\times S)$. The same argument also holds for the other auxiliary variable $m(t+1/2,\cdot)$.

We fix now a constant $M$, defined as follows:
\begin{equation} \label{eq:cons_M}
M = \frac{1}{\alpha^c}
\Big(
2 \max_{(t,x) \in Q}
\| \ell^c_v(t,x,0) \|
+ \sqrt{d} (L_{\ell}^c + L_f^c + L_g^c)
\Big).
\end{equation}
The constant $M$ is an upper bound of $\| v \|_{\infty,\infty}$, as will be seen in Theorem \ref{thm:equivalence}. We consider the following condition on $(\Delta t, h)$:
\begin{equation}\label{cond:CFL3}\tag{CFL}
\Delta t \leq \frac{h^2}{2d (1-\theta)\sigma} ,\qquad h \leq \frac{2(1-\theta)\sigma}{M}.
\end{equation}

\begin{rem}\label{rem:fp}
  Let us reformulate the explicit part of \eqref{eq:theta_mfg2}(iii) by isolating $m(t+1/2,x)$:
      \begin{equation}\label{eq:fp_exp}
      \begin{split}
          m(t+1/2 ,x) = \Big(1-\frac{2d(1-\theta )\sigma\Delta t}{h^2}\Big) m(t,x)  + \Delta t\sum_{i=1}^d\Big(\frac{(1-\theta )\sigma}{h^2} - \frac{v_i(t,x + he_i)}{2h} \Big) m(t,x+he_i) \\
          + \Delta t\sum_{i=1}^d\Big(\frac{(1-\theta )\sigma}{h^2} + \frac{v_i(t,x - he_i)}{2h} \Big) m(t,x-he_i) .
      \end{split}
      \end{equation}
The coefficients preceding $m(t,x)$ and $m(t,x\pm he_i)$ in \eqref{eq:fp_exp} are affine functions with respect to $v(t,x)$ and $v(t,x\pm he_i)$ respectively, and these coefficients are positive under the condition \eqref{cond:CFL3} since $M$ is an upper bound of $\|v\|_{\infty,\infty}$.
Moreover, summing \eqref{eq:fp_exp} over $x$ yields that $\sum_{x\in S} m(t+1/2,x) = \sum_{x\in S}m(t,x)$. Therefore, under the condition \eqref{cond:CFL3}, if $m(t)\in \mathcal{P}(S)$, then $m(t+1/2) \in \mathcal{P}(S)$. Since $m(t+1)$ is the solution of an implicit scheme for the heat equation (with source term $m(t+1/2,x)$), we have that $m(t+1)\in \mathcal{P}(S)$ if $m(t+1/2)\in \mathcal{P}(S)$, by Lemma \ref{lm:implicit}. In other words, probability distributions on $S$ are preserved by the discrete Fokker-Planck equation under the condition \eqref{cond:CFL3}.
\end{rem}

\begin{rem}\label{rem:theta}
 Let us discuss the choice of $\theta$ in the theta-scheme \eqref{eq:theta_mfg2}. If we set $\theta =1$, we cannot guarantee the positivity of the coefficients preceding $m(t,x\pm he_i)$ in \eqref{eq:fp_exp}. As a result, we cannot use the same argument presented in Remark \ref{rem:fp} to ensure the preservation of probability distributions of the discrete FP equation. On the other hand, to obtain an energy estimate ($\ell^2$-stability) of the discrete FP equation, we require that $\theta > 1/2$, as demonstrated in Proposition \ref{prop:energy}.
\end{rem}

\begin{thm}\label{thm:main}
Let Assumptions \ref{ass:continuous} and \ref{ass:sol+} hold true. Let $\theta \in (1/2,1)$ and let $(\Delta t, h)$ satisfy the condition \eqref{cond:CFL3}. Then, the theta-scheme \eqref{eq:theta_mfg2} has a unique solution $(u_h, v_h, m_h)$. Moreover, there exists a constant $C>0$, independent of $\Delta t$ and $h$, such that
\begin{equation*}
\|u_h - u^*_h \|_{\infty,\infty}  + \|m_h - m^{*}_h \|_{\infty,1} \leq C h^r,
\end{equation*}
{ where  $u^{*}_h, m^{*}_h \in \R(\tilde{\mathcal{T}} \times S)$ are defined by $u^{*}_h(t,x)= u^{*}(t \Delta t, x)$ and $m^{*}_h(t)= \mathcal{I}_h (m^{*}(t\Delta t))$.}
\end{thm}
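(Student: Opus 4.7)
The strategy is to treat the projection $(u_h^*, v_h^*, m_h^*)$ of the continuous solution as a perturbed solution of the theta-scheme, with perturbation equal to the consistency residual, and then to apply the stability machinery of Sections 3--4. Existence and uniqueness of $(u_h, v_h, m_h)$, together with the a priori bound $\|v_h\|_{\infty,\infty}\leq M$ and the preservation $m_h(t)\in \mathcal{P}(S)$, would follow from the general discrete MFG framework of Section 3 combined with Remark \ref{rem:fp}, all made possible by the CFL condition \eqref{cond:CFL3}.

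The first substantive step is the consistency analysis. Inserting $(u_h^*, v_h^*, m_h^*)$ into \eqref{eq:theta_mfg2} and Taylor-expanding using the H\"older regularity of Assumption \ref{ass:sol+}, one expects a pointwise HJB residual of order $\mathcal{O}(\Delta t\, h^r)$ and an FP residual that decomposes as $r_1 + \operatorname{div}_h r_2$ with sums in $x$ of size $\mathcal{O}(\Delta t\, h^r)$ and $\mathcal{O}(\Delta t\, h^{2r})$ respectively. The divergence structure of the second piece is essential: it is intended to be absorbed after a discrete integration by parts (Lemma \ref{lm:int_by_part}), trading a lost derivative against the regularity already available on the test function in the stability estimate.

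The core of the argument is to apply the fundamental inequality (Proposition \ref{prop:error_fund}) to the pair consisting of the genuine discrete MFG solved by $(u_h, v_h, m_h)$ and the perturbed system solved by $(u_h^*, v_h^*, m_h^*)$. The Lasry--Lions cross term $\langle f[m_h]-f[m_h^*],\, m_h-m_h^*\rangle$ is nonnegative by Assumption \ref{ass:continuous}(3), which transfers to the discrete $f$ through the averaging in \eqref{eq:grid2}, and so may be discarded, leaving after Cauchy--Schwarz on the residuals an estimate of the form
\begin{equation*}
\sum_{t\in\mathcal{T}}\sum_{x\in S}\bigl(m_h(t,x)+m_h^*(t,x)\bigr)\,\|v_h(t,x)-v_h^*(t,x)\|^2 \leq C\, h^{2r}.
\end{equation*}
Feeding this density-weighted bound into the energy estimate of Proposition \ref{prop:energy} for the perturbed discrete FP equation satisfied by $m_h-m_h^*$ (this is where $\theta > 1/2$ is used to close the energy inequality) should yield $\|m_h-m_h^*\|_{\infty,2}\leq C\, h^r$, hence $\|m_h-m_h^*\|_{\infty,1}\leq C\, h^r$ by H\"older's inequality. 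Finally, $u_h-u_h^*$ solves a backward linear discrete heat equation with source $f[m_h]-f[m_h^*]+\mathcal{O}(h^r)$, whose $\ell^2$-size is $\mathcal{O}(h^r)$ by the Lipschitz property of $f$; a discrete maximum-principle argument, using that the implicit heat step is $\ell^\infty$-nonexpansive (Lemma \ref{lm:implicit}) and the explicit centered step is monotone under \eqref{cond:CFL3} (Theorem \ref{thm:equivalence}), should propagate this to $\|u_h-u_h^*\|_{\infty,\infty}\leq Ch^r$.

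The main difficulty I anticipate is in coordinating the consistency orders with the functional frameworks: matching the density-weighted $\ell^2$ control on $v_h-v_h^*$ produced by the fundamental inequality with the norms required by the energy estimate, absorbing the divergence-form part of the FP residual without losing a factor of $h^{-1}$, and balancing the $h^d$ normalization of the discrete probability measures against the residual magnitudes so that the final exponent is exactly $r$ rather than something worse.
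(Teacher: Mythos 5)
Your overall architecture --- consistency residuals, fundamental inequality, energy estimate, stability of the discrete HJB equation --- is exactly the paper's, but the way you chain the pieces together does not close, and the two places where it fails are precisely the two difficulties you flag at the end without resolving. First, the fundamental inequality does not directly yield $\sum_{t,x}(m_h+m_h^*)\|v_h-v_h^*\|^2\leq C h^{2r}$: its right-hand side is $\sum_{t,x}(u_h^*-u_h)(t+1,x)\delta(t,x)+(m_h-m_h^*)(t,x)\eta(t,x)$, which involves the unknown errors themselves. After integrating the divergence part of $\delta$ by parts (using only the uniform Lipschitz bounds on $u_h$ and $u_h^*$) and applying H\"older to the remaining terms, what one actually obtains is $\epsilon\leq C\big(h^{2r}+\|m_h^*-m_h\|_{\infty,2}\,h^{r-d/2}\big)$, where $\epsilon=\Delta t\,\big\|\|v_h^*-v_h\|^2 m_h^*\big\|_{1,1}$. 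This must then be fed into the energy estimate, which returns $\|m_h^*-m_h\|_{\infty,2}^2\leq C h^{d}(\epsilon+h^{2r})$, and the resulting coupled pair of inequalities is closed by absorbing the cross term $\|m_h^*-m_h\|_{\infty,2}\,h^{r+d/2}$ with Young's inequality. This bootstrap is the heart of the proof and is absent from your linear chain.

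Second, your $h^d$ bookkeeping gives the wrong exponent. The correct output of the bootstrap is $\|m_h^*-m_h\|_{\infty,2}\leq C h^{r+d/2}$, not $C h^{r}$, and the extra factor $h^{d/2}$ is exactly what is consumed afterwards: on $S$ one has $\|\mu\|_1\leq h^{-d/2}\|\mu\|_2$, so H\"older goes the opposite way from what you wrote, and by Lemma \ref{lm:ass} the discrete coupling $f$ is only $L_f^c h^{-d/2}$-Lipschitz in $m$ for the $\ell^2$ norm, so that $\|f[m_h]-f[m_h^*]\|_{\infty}\leq L_f^c h^{-d/2}\|m_h-m_h^*\|_{\infty,2}$. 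With your claimed $\ell^2$ bound of order $h^r$ you would only obtain $\|m_h-m_h^*\|_{\infty,1}=\mathcal{O}(h^{r-d/2})$ and $\|u_h-u_h^*\|_{\infty,\infty}=\mathcal{O}(h^{r-d/2})$. The compensating powers of $h$ live in the pointwise consistency residuals ($\delta=\mathcal{O}(\Delta t\,h^{r+d})$ plus a divergence of $\mathcal{O}(\Delta t\, h^{2r+d})$, together with $m_h^*=\mathcal{O}(h^d)$); quoting the residual sizes only after summation in $x$, as you do, hides exactly the factors needed to land on the exponent $r$.
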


The proof of Theorem \ref{thm:main} is given in Section \ref{sec:proof}.

\section{General properties of discrete mean field games}

We consider in this section a general class of discrete time and finite state space mean field games, for which we establish the existence and uniqueness of a solution as well as a fundamental inequality. We will show in Section \ref{sec:stability} that the theta-scheme falls into this class of problems.

\subsection{Notations and assumptions}

In this section, the state space $S$ is an arbitrary discrete set in $\mathbb{R}^d$, not necessarily a discretization of $\mathbb{T}^d$.
Let us introduce the set of discrete curves of probability measures and the set of transition processes, defined by
\begin{align*}
     \mathcal{P}(\tilde{\mathcal{T}},S)  &=  \left\{  m \in \mathbb{R}( \tilde{\mathcal{T}}\times S) \, \Big|     \, \forall t \in \tilde{\mathcal{T}} , \, m(t,\cdot) \in \mathcal{P}(S)  \right \},
     \\
     \Pi(\mathcal{T}, S) & = \left\{ \pi \in \R(\mathcal{T} \times S \times S) \, \Big| \, \forall (t,x) \in \mathcal{T}\times S, \, \pi(t,x,\cdot) \in \mathcal{P}(S) \right\}.
\end{align*}


\begin{rem} \label{rem:charac_process}
Any $\pi \in \R(\mathcal{T} \times S \times S)$ is a transition process if and only if for any $m \in \mathcal{P}(S)$ and for any $t \in \mathcal{T}$, we have $m' \in \mathcal{P}(S)$, for
$m'(y)= \sum_{x \in S} \pi(t,x,y) m(x)$, for all $y \in S$.
\end{rem}

We introduce now a running cost $\ell$, a coupling cost $f$, an initial condition $m_0$ and a terminal cost $g$, where
\begin{align*}
\ell \colon \mathcal{T}\times S \times \mathbb{R}^d \rightarrow \mathbb{R} ,
\qquad
f \colon \mathcal{T}\times S \times \mathbb{R}(S) \rightarrow \mathbb{R},
\qquad
m_0 \in \mathcal{P}(S),
\qquad
g \in \mathbb{R}(S).
\end{align*}
In this section, $\ell$, $f$, $m_0$, and $g$ are considered independently of the definition \eqref{eq:grid1}. We will consider again definition \eqref{eq:grid1} in the next section when we interpret the theta-scheme as a discrete MFG.

To formulate the discrete MFG system, we need a control bound $\bar{D}>0$. The admissible control space, denoted by $\R^d_{\bar{D}}(\mathcal{T}\times S)$, is the set of all elements $v\in \R^d(\mathcal{T}\times S)$ such that $\|v\|_{\infty,\infty}\leq \bar{D}$. The probability of the motion from one state $x\in S$ to another state $y\in S$ at a time $t\in \mathcal{T}$ under some control $v\in \R^d_{\bar{D}}(\mathcal{T}\times S)$ is given by
\begin{equation*}
  \pi[v](t,x,y) \coloneqq  \pi(t,x,y, v(t,x)),
\end{equation*}
where $\pi$ is a function from $\mathcal{T}\times S\times S\times \R^d$ to $\R$. We assume that $\pi[v]$ is a transition process for any admissible control $v$, i.e.,
\begin{equation}\label{eq:ass}
    \pi[v] \in \Pi(\mathcal{T}, S), \qquad \forall v\in \R^d_{\bar{D}}(\mathcal{T}\times S).
\end{equation}

For any $D \in (0,\infty]$, we denote by $\ell^D \colon \mathcal{T} \times S \times \R^d \rightarrow \R \cup \{ \infty \}$ the function defined by
\begin{equation} \label{eq:lD}
\ell^D(t,x,v)
=
\begin{cases}
\begin{array}{cl}
\ell(t,x,v), & \text{ if } \| v \| \leq D, \\
\infty, & \text{ otherwise.}
\end{array}
\end{cases}
\end{equation}
When $D= \infty$, $\ell^D= \ell$.
The Hamiltonian $H^D$ is defined as follows:
\begin{equation}\label{eq:HD}
    H^D(t,x,p) = \sup_{v\in \mathbb{R}^d} \ \langle -p, v\rangle - \ell^D(t,x,v)
    =
    \sup_{ v\in \mathbb{R}^d, \, \| v \| \leq D} \ \langle -p, v\rangle - \ell(t,x,v).
\end{equation}
We consider the following assumptions on the previous data.

\begin{ass2} \label{ass1}
\begin{enumerate}
\item \emph{Regularity.} There exist positive constants $L_{\ell}$, $L_g$, $L_f$, and $L_{f'}$ such that for any $t \in \mathcal{T}$, for any $v \in R^d$, and for any $m \in \mathcal{P}(S)$, the functions
$\ell(t,\cdot,v)$, $g(\cdot)$, and $f(t,\cdot,m)$ are resp.\@ $L_{\ell}$, $L_g$, and $L_f$-Lipschitz continuous, i.e.
\begin{equation*}
\begin{split}
  |\ell(t,x_1,v) - \ell(t,x_2,v)| \leq {} & L_{\ell} \|x_1 - x_2\|, \\
    |g(x_1) - g(x_2)| \leq {} & L_g\|x_1-x_2\|, \\
    |f(t,x_1,m) - f(t,x_2,m)| \leq {} & L_f  \|x_1 - x_2\|,
    \end{split}
\end{equation*}
for all $x_1$ and $x_2$ in $S$. Moreover, the function $f(t,x,\cdot)$ is $L_f'$-Lipschitz w.r.t.\@ $m$ for the $\|\cdot\|_2$ norm , i.e., for all $m_1$ and $m_2$ in $\mathcal{P}(S)$,
    \begin{equation*}
       |f(t,x,m_1) - f(t,x,m_2)| \leq  L_f' \|m_1 - m_2\|_2.
    \end{equation*}
\item \emph{Strong convexity.} There exist $\alpha > 0$ such that for any $t \in \mathcal{T}$ and for any $x \in S$, the function $\ell(t,x,\cdot)$ is $\alpha$-strongly convex, i.e.,
   \begin{equation*}
        \ell(t,x,v_2) \geq \ell(t,x,v_1) + \langle p, v_2-v_1\rangle + \frac{\alpha}{2}\|v_2-v_1\|^2,
    \end{equation*}
    for all $v_1$ and $v_2$ in $\R^n$ and for all $p \in  \partial_p \ell(t,x,v_1)$.
\item \emph{Monotonicity.} For any $t \in \mathcal{T}$, for any $m_1$ and $m_2$ in $\mathcal{P}(S)$,
    \begin{equation*}
        \sum_{x\in S} \Big( f(t,x,m_1)-f(t,x,m_2)\Big) (m_1(x)-m_2(x) ) \geq 0.
    \end{equation*}
\end{enumerate}
\end{ass2}

\begin{lem} \label{lem:hD_diff}
Let $D \in (0,\infty]$. The following holds true.
\begin{enumerate}
\item The Hamiltonian $H^D$ is continuously differentiable with respect to $p$.
\item For any $t \in \mathcal{T}$, for any $x \in S$, and for any $v \in \R^d$, we have $H^D(t,x,p)= -\langle p,v\rangle - \ell^D(t,x,v)$ if and only if $v= - H_p^D(t,x,p)$.
\item The partial derivative $H_p^D$ is $\frac{1}{\alpha}$-Lipschitz continuous with respect to $p$.
\item For any $t \in \mathcal{T}$, for any $x \in S$, for any $v \in \R^d$, and for any $p_0 \in \partial_v \ell(t,x,0)$,
\begin{equation} \label{eq:bound:Hp}
\big\| H_p^D(t,x,p) \big\| \leq \frac{1}{\alpha} \Big( 2 \|p_0 \| + \|  p \| \Big).
\end{equation}
\end{enumerate}
\end{lem}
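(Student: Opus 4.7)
The plan is as follows. I fix $(t,x) \in \mathcal{T}\times S$ throughout and write $B_D := \{v \in \R^d:\|v\|\le D\}$. Since $\ell(t,x,\cdot)$ is $\alpha$-strongly convex on $\R^d$, the augmented function $\ell^D(t,x,\cdot) = \ell(t,x,\cdot) + I_{B_D}$ is proper, lower semicontinuous, and $\alpha$-strongly convex. The objective $v \mapsto \langle -p,v\rangle - \ell^D(t,x,v)$ is thus $\alpha$-strongly concave with compact nonempty effective domain $B_D$, so its supremum $H^D(t,x,p)$ is finite and attained at a unique point $v^{*}(t,x,p)$. From here one can view $H^D(t,x,\cdot) = (\ell^D(t,x,\cdot))^{*}(-\,\cdot\,)$ and invoke the classical Fenchel duality result for strongly convex functions, which gives (1), (3), and $H_p^D(t,x,p) = -v^{*}(t,x,p)$ in one stroke; (2) then follows from uniqueness. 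To keep the argument elementary, I outline a direct proof below.

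For (1) and (2), I apply Danskin's theorem to the smooth, jointly continuous map $(p,v)\mapsto \langle -p,v\rangle - \ell(t,x,v)$, whose partial gradient in $p$ is $-v$. Since the maximizer over the compact set $B_D$ is unique, Danskin's theorem yields differentiability with $H_p^D(t,x,p) = -v^{*}(t,x,p)$; continuity in $p$ will follow from the Lipschitz estimate established in (3). Part (2) then records the uniqueness of the maximizer: the equality $H^D(t,x,p) = -\langle p,v\rangle - \ell^D(t,x,v)$ forces $\ell^D(t,x,v) < \infty$ (hence $v \in B_D$) and $v = v^{*}(t,x,p) = -H_p^D(t,x,p)$, and conversely.

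For (3), I compare the optima at $p_1,p_2$. Setting $v_i := v^{*}(t,x,p_i)$, each $v_i$ minimizes the $\alpha$-strongly convex function $\phi_i(v) := \langle p_i,v\rangle + \ell(t,x,v)$ over $B_D$, so by the standard minimizer inequality for strongly convex functions,
\[
\phi_i(v) \geq \phi_i(v_i) + \tfrac{\alpha}{2}\|v - v_i\|^2, \qquad \forall v \in B_D.
\]
Evaluating at $(i,v) = (1,v_2)$ and $(2,v_1)$ and adding, the $\ell$-terms cancel and I obtain $\langle p_1 - p_2,\, v_2 - v_1\rangle \geq \alpha \|v_1 - v_2\|^2$. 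Cauchy--Schwarz then yields $\|v_1 - v_2\| \leq \tfrac{1}{\alpha}\|p_1 - p_2\|$, which is the required Lipschitz estimate for $H_p^D = -v^{*}$.

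For (4), I combine (3) at $p_1 = p$ and $p_2 = 0$ with an estimate of $\|v^{*}(t,x,0)\|$. For any $p_0 \in \partial_v \ell(t,x,0)$, strong convexity at $v=0$ gives $\ell(t,x,v) \geq \ell(t,x,0) + \langle p_0, v\rangle + \tfrac{\alpha}{2}\|v\|^2$ for every $v \in \R^d$. Since $v^{*}(t,x,0)$ minimizes $\ell(t,x,\cdot)$ over $B_D$ and $0 \in B_D$, one has $\ell(t,x,v^{*}(t,x,0)) \leq \ell(t,x,0)$; substituting and using Cauchy--Schwarz yields $\tfrac{\alpha}{2}\|v^{*}(t,x,0)\|^2 \leq -\langle p_0, v^{*}(t,x,0)\rangle \leq \|p_0\|\,\|v^{*}(t,x,0)\|$, hence $\|v^{*}(t,x,0)\| \leq \tfrac{2}{\alpha}\|p_0\|$. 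Combined with $\|v^{*}(t,x,p) - v^{*}(t,x,0)\| \leq \tfrac{1}{\alpha}\|p\|$ from (3), this gives $\|H_p^D(t,x,p)\| = \|v^{*}(t,x,p)\| \leq \tfrac{1}{\alpha}(2\|p_0\| + \|p\|)$, as claimed. The only real subtlety throughout is justifying the differentiability in (1), which is automatic from strong convexity; everything else reduces to standard inequalities for strongly convex functions.
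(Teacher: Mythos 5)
Your proof is correct and follows essentially the same route as the paper: parts (1)--(3) come from the standard fact that the conjugate of an $\alpha$-strongly convex function is $\mathcal{C}^1$ with $\frac{1}{\alpha}$-Lipschitz gradient (the paper simply cites Hiriart-Urruty--Lemar\'echal for this, while you spell out the Danskin/two-point monotonicity argument), and part (4) is word-for-word the paper's argument: reduce to $p=0$ via the Lipschitz estimate, then bound the minimizer of $\ell(t,x,\cdot)$ over the ball using strong convexity at $0$ and Cauchy--Schwarz. No gaps.
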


The proof is given in the Appendix \ref{Appendix:A}. 
A direct consequence of Lemma \ref{lem:hD_diff} is the following.

\begin{cor} \label{coro:equi_H}
Let $(t,x,p) \in  \mathcal{T} \times S \times \R^d$. Let $p_0 \in \partial_v \ell (t,x,0)$. Let $D_1$ and $D_2 \in (0,\infty]$ be such that $D_i \geq \frac{1}{\alpha} \big( 2 \| p_0 \| + \| p \| \big)$, for $i=1,2$. Then
\begin{equation*}
H^{D_1}(t,x,p)= H^{D_2}(t,x,p)
\quad \text{and} \quad
H_p^{D_1}(t,x,p)= H_p^{D_2}(t,x,p).
\end{equation*}
\end{cor}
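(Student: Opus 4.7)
The plan is to compare both $H^{D_1}$ and $H^{D_2}$ to the unconstrained Hamiltonian $H^{\infty}$, and show that under the bound assumed on $D_i$, the unconstrained maximizer is admissible in the constrained problem, so the two suprema coincide.

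First I would apply Lemma \ref{lem:hD_diff}(4) with $D= \infty$, which yields a maximizer $v^{*} := -H_p^{\infty}(t,x,p)$ for the unconstrained supremum defining $H^{\infty}(t,x,p)$, together with the bound
\begin{equation*}
\| v^{*} \| = \| H_p^{\infty}(t,x,p) \| \leq \frac{1}{\alpha}\bigl( 2\| p_0 \| + \| p \|\bigr) \leq D_i, \qquad i=1,2.
\end{equation*}
By Lemma \ref{lem:hD_diff}(2) (applied with $D=\infty$), we have $H^{\infty}(t,x,p) = -\langle p, v^{*}\rangle - \ell(t,x,v^{*})$.

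Next, since $\| v^{*} \| \leq D_i$, $v^{*}$ is feasible in the supremum defining $H^{D_i}(t,x,p)$, so $H^{D_i}(t,x,p) \geq -\langle p, v^{*}\rangle - \ell(t,x,v^{*}) = H^{\infty}(t,x,p)$. The reverse inequality $H^{D_i}(t,x,p) \leq H^{\infty}(t,x,p)$ is immediate from the definitions \eqref{eq:lD} and \eqref{eq:HD}, since $\ell^{D_i} \geq \ell = \ell^{\infty}$. Therefore $H^{D_1}(t,x,p) = H^{\infty}(t,x,p) = H^{D_2}(t,x,p)$, which proves the first equality.

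For the gradients, once the above equality of values is established, $v^{*}$ attains the supremum in the definition of $H^{D_i}(t,x,p)$. Applying Lemma \ref{lem:hD_diff}(2) now with $D = D_i$ gives $H_p^{D_i}(t,x,p) = -v^{*} = H_p^{\infty}(t,x,p)$ for $i=1,2$, hence the second equality. There is no genuine obstacle here; the only step requiring care is the direction $v^{*}$ attains the sup of the constrained problem, which is the content of Lemma \ref{lem:hD_diff}(2) applied to $H^{D_i}$, and this relies precisely on the bound $\| v^{*} \| \leq D_i$ derived from the assumption on $D_i$.
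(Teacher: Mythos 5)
Your proof is correct and follows exactly the route the paper intends: the paper gives no explicit proof, stating only that the corollary is a direct consequence of Lemma \ref{lem:hD_diff}, and your argument (the bound \eqref{eq:bound:Hp} shows the unconstrained maximizer $v^{*}=-H_p^{\infty}(t,x,p)$ satisfies $\|v^{*}\|\leq D_i$, so the constraint is inactive and the values and gradients coincide via the Fenchel relation of Lemma \ref{lem:hD_diff}(2)) is precisely that direct consequence spelled out.
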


\begin{lem} \label{lem:hD_quad}
Let $D \in (0,\infty]$, let $t \in \mathcal{T}$ and let $x \in S$. For any $v$, for any $\bar{p} \in \R^d$, for any $m \geq 0$ and for any $\bar{m} \in \R$, it holds that
\begin{equation}\label{eq:d_lm}
    \ell^D(t,x,v)m - \ell^D(t,x,\bar{v}) \bar{m} \geq - H^D(t,x,\bar{p})(m-\bar{m}) - \langle \bar{p}, m v - \bar{m}\bar{v} \rangle + \frac{\alpha}{2} \| v -\bar{v}\|^2 m,
\end{equation}
where $\bar{v}= -H_p^D(t,x,\bar{p})$.
\end{lem}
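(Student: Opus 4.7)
The plan is to derive the inequality from two ingredients: the Fenchel--Young equality at the pair $(\bar p,\bar v)$, and the strong convexity of $\ell^D(t,x,\cdot)$ treated as an extended-real-valued function on $\R^d$. Since $\ell(t,x,\cdot)$ is $\alpha$-strongly convex and $\ell^D$ equals $\ell$ on the closed ball $\{\|v\|\le D\}$ and $+\infty$ elsewhere, $\ell^D(t,x,\cdot)$ is $\alpha$-strongly convex on $\R^d$. I will fix $(t,x)$ and drop it from the notation in what follows.

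First I would use Lemma \ref{lem:hD_diff}(2). Applied to the choice $\bar v = -H_p^D(t,x,\bar p)$, it yields the equality $H^D(\bar p) = -\langle \bar p,\bar v\rangle - \ell^D(\bar v)$, which in particular forces $\ell^D(\bar v)$ to be finite, hence $\|\bar v\|\le D$. Rewriting this as $\ell^D(\bar v) + H^D(\bar p) = \langle -\bar p,\bar v\rangle$ is exactly the Fenchel--Young equality for the conjugate pair $\ell^D$ and $H^D(\cdot) = (\ell^D)^*(-\,\cdot)$; together with the standard characterization of equality in Fenchel--Young, this gives $-\bar p \in \partial \ell^D(\bar v)$. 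Inserting this subgradient into the strong-convexity inequality for $\ell^D$ produces
\begin{equation*}
\ell^D(v) \;\geq\; \ell^D(\bar v) - \langle \bar p,\, v-\bar v\rangle + \frac{\alpha}{2}\|v-\bar v\|^2, \qquad \forall v \in \R^d.
\end{equation*}

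Next I would multiply this inequality by $m \ge 0$ (preserving the inequality with the convention $0 \cdot \infty = 0$ when $\|v\|>D$), then subtract $\ell^D(\bar v)\bar m$ from both sides to obtain
\begin{equation*}
\ell^D(v) m - \ell^D(\bar v)\bar m \;\ge\; \ell^D(\bar v)(m-\bar m) - \langle \bar p,\, v-\bar v\rangle m + \frac{\alpha}{2}\|v-\bar v\|^2 m.
\end{equation*}
To finish I would substitute the Fenchel--Young equality in the form $\ell^D(\bar v) = -H^D(\bar p) - \langle \bar p,\bar v\rangle$ and use the algebraic identity $\bar v(m-\bar m) + (v-\bar v)m = vm - \bar v\bar m$ to collapse the two inner products into $\langle \bar p,\, vm - \bar m\bar v\rangle$, which yields exactly \eqref{eq:d_lm}.

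The only delicate step is the subgradient extraction $-\bar p \in \partial \ell^D(\bar v)$ from the Fenchel--Young equality; this is standard convex analysis but needs to be invoked for the extended-real-valued $\ell^D$ rather than for $\ell$ itself (so that the possibility $\|\bar v\|=D$, where $\partial \ell(\bar v)$ and $\partial \ell^D(\bar v)$ differ by a normal-cone contribution, is handled transparently). The case $\|v\|>D$ in the final inequality is trivial, since the left-hand side is then $+\infty$ whenever $m>0$, and both sides equal $-\ell^D(\bar v)\bar m$ when $m=0$.
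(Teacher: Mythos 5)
Your proof is correct and follows essentially the same route as the paper's: Fenchel's relation at $(\bar p,\bar v)$ yields both the equality $H^D(t,x,\bar p)=-\langle\bar p,\bar v\rangle-\ell^D(t,x,\bar v)$ and the inclusion $-\bar p\in\partial_v\ell^D(t,x,\bar v)$, after which the strong-convexity inequality is multiplied by $m\geq 0$ and rearranged. Your extra care with the extended-real-valued $\ell^D$ (the normal-cone contribution at $\|\bar v\|=D$ and the trivial case $\|v\|>D$) is a welcome refinement of the same argument rather than a different one.
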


The proof is given in the Appendix \ref{Appendix:A}. 

\subsection{The discrete MFG model}

The discrete MFG model of interest in this section is a coupled system of three variables: a value function $u \in \R(\tilde{\mathcal{T}} \times S)$, a policy $v \in \R_{\bar{D}}^d(\mathcal{T} \times S)$, and a curve of probability distributions $m \in \R(\tilde{\mathcal{T}} \times S)$. It consists of a Kolmogorov equation, a dynamic programming equation, and a feedback relation.

\begin{itemize}
\item Given $v \in \R_{\bar{D}}^d(\mathcal{T} \times S)$, denote by $\textbf{FP}(v) \in \R(\mathcal{T} \times S)$ the solution $m$ to the Kolmogorov equation
\begin{equation}\label{eq:kolmogorov}
\begin{cases}
\begin{array}{rll}
m(t+1,y) = & \!\!\! \sum_{x\in S} \pi[v](t,x,y)m(t,x),  \quad & \forall (t,y) \in \mathcal{T}\times S, \\
m(0,x)  = & \!\!\! m_0(x), & \forall x \in S.
\end{array}
\end{cases}
\end{equation}
\item Given $\mu \in \mathcal{P}(\tilde{\mathcal{T}},S)$, denote by $\textbf{HJB}(\mu) \in \R(\tilde{\mathcal{T}} \times S)$ the solution $u$ to the dynamic programming equation
\begin{equation}\label{eq:DP}
\begin{cases}
\begin{array}{rll}
      u(t,x) = & \! \! \!{\displaystyle \inf_{\omega\in \mathbb{R}^d}} \Big( \tilde{\ell}_{\mu}^{\bar{D}}(t,x,\omega) \Delta t + \sum_{y\in S} \pi(t,x,y,\omega )u(t+1, y) \Big) ,\quad &\forall(t,x)\in \mathcal{T}\times S; \\
      u(T,x) = & \! \! \! g(x) , &\forall x\in S,
\end{array}
\end{cases} 
\end{equation}
where $\tilde{\ell}_{\mu}^{\bar{D}}(t,x,\omega) = \ell^{\bar{D}}(t, x, \omega) + f(t, x,\mu(t))$. 
\item Given $u \in \R(\tilde{\mathcal{T}} \times S)$, denote by $\textbf{V}(u)$ the policy $v$ defined by
\begin{equation}\label{eq:OS}
     v(t,x)  = \argmin_{\omega\in \mathbb{R}^d}
     \Big( \ell^{\bar{D}}(t,x,\omega)\Delta t + \sum_{y\in S} \pi(t,x,y,\omega )u(t+1, y) \Big), \quad \forall(t,x)\in \mathcal{T}\times S.
\end{equation}
The uniqueness of the minimizer in the above definition is a consequence of Lemma \ref{lem:hD_diff}.
\end{itemize}

The discrete MFG consists in finding a triplet $(u,v,m)$ such that $u= \textbf{HJB}(m)$, $v= \textbf{V}(u)$, and $m = \textbf{FP}(v)$. This is equivalent to find a fixpoint to the map $\phi$, defined by
\begin{equation*} \label{def:phi}
\phi \colon m \in \mathcal{P}(\tilde{\mathcal{T}}, S) \mapsto \textbf{FP} \circ \textbf{V} \circ \textbf{HJB} (m) \in \mathcal{P}(\tilde{\mathcal{T}},S).
\end{equation*}
It is easy to verify that $\phi$ is indeed valued in $\mathcal{P}(\tilde{\mathcal{T}} \times S)$. Let $m \in \mathcal{P}(\tilde{\mathcal{T}} \times S)$ and let $v=\textbf{V} \circ \textbf{HJB} (m) $. By definition, $\| v \|_{\infty,\infty} \leq {\bar{D}}$. Therefore, by assumption \eqref{eq:ass}, $\pi[v]$ is a transition process. Then $\textbf{FP}(v) \in \mathcal{P}(\tilde{\mathcal{T}} , S)$, by Remark \ref{rem:charac_process}.

The discrete MFG can be formulated as the following coupled system: for all $(t,x) \in \mathcal{T} \times S$,
\begin{equation}\label{eq:mfgt}
\left\{\begin{array}{cl}
\mathrm{(i)} \ & u(t,x) = \inf_{\omega\in \mathbb{R}^d}\tilde{\ell}_{m}^{\bar{D}}(t,x,\omega) \Delta t + \sum_{y\in S} \pi(t,x,y,\omega )u(t+1, y); \\[0.7em]
\mathrm{(ii)} \ & v(t,x) = \argmin_{\omega\in \mathbb{R}^d}\ell^{\bar{D}}(t,x,\omega)\Delta t + \sum_{y\in S} \pi(t,x,y,\omega )u(t+1, y);
\\[0.7em]
\mathrm{(iii)} \ & m(t+1,x) = \sum_{y\in S} \pi[v](t,y,x)m(t,y);\\[0.7em]
\ \mathrm{(iv)} \ \ & m(0,x)=m_{0}(x), \quad u(T,x)=g(x).
\end{array}\right.
\end{equation}

As mentioned in Remark \ref{rem:fp}, the coefficients preceding $m(t,x\pm he_i)$ in \eqref{eq:fp_exp} are affine functions with respect to $v(t,x\pm he_i)$. Furthermore, $m(t+1)$ can be seen as a linear function of $m(t+1/2)$ independent of $v$ from the implicit part of \eqref{eq:theta_mfg2}(iii). Therefore, in the theta-scheme \eqref{eq:theta_mfg2}, we can express $m(t+1,x)$ as a linear combination of $m(t,y)$ for $y\in S$, where the coefficients preceding $m(t,y)$ are affine functions with respect to $v(t,y)$.
Comparing this with the coefficients $\pi[v](t,y,x) = \pi(t,y,x,v(t,y))$ in \eqref{eq:mfgt}(iii), in order to study \eqref{eq:theta_mfg2} as a particular case of \eqref{eq:mfgt}, we find it convenient to consider $\pi(t,x,y,\omega)$ in an affine form of $\omega$, i.e., 
\begin{equation}\label{eq:pi}
    \pi(t,x,y,\omega) = \pi_0(t,x,y) + \Delta t \langle \pi_1(t,x,y), \omega \rangle, \qquad \forall (t,x,y,\omega) \in \mathcal{T}\times S^2\times \R^d,
\end{equation}
where $\pi_0 \in \mathbb{R}(\mathcal{T}\times S\times S)$ and $\pi_1 \in \mathbb{R}^d(\mathcal{T}\times S\times S)$. 
The exact formulas for $\pi_0$ and $\pi_1$ associated with \eqref{eq:theta_mfg2} are given in \eqref{eq:Pi0}-\eqref{eq:Pi1}.

In the sequel of this section, we consider $\pi$ given by \eqref{eq:pi}. We make the following assumption on $\pi_0$ and $\pi_1$.

\begin{ass2} \label{ass3}
The elements $\pi_0$ and $\pi_1$ satisfy the following condition:
 \begin{equation*} 
\begin{cases}
\begin{array}{ll}
      \pi_0(t,x,\cdot) \in \mathcal{P}(S), \quad  &\forall (t,x) \in \mathcal{T}\times S, \\[0.4em]
 \sum_{y\in S}\pi_1 (t,x,y) = 0 , \quad  &\forall (t,x) \in \mathcal{T}\times S, \\[0.4em]
      \pi_0(t,x,y) \geq  \Delta t \bar{D} \| \pi_1(t,x,y)\| , \qquad &\forall (t,x,y) \in \mathcal{T}\times S\times S.
\end{array}
\end{cases}
\end{equation*}
\end{ass2}
\begin{lem}\label{lem:trivial}
For $\pi$ given by \eqref{eq:pi}, Assumption \ref{ass3} is equivalent to \eqref{eq:ass}.
\end{lem}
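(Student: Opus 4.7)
The plan is to prove the two implications directly from the affine form \eqref{eq:pi}.

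\medskip

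\textbf{Forward direction (Assumption \ref{ass3} $\Rightarrow$ \eqref{eq:ass}).} Fix $v \in \R_{\bar D}^d(\mathcal{T}\times S)$ and $(t,x) \in \mathcal{T}\times S$. Using \eqref{eq:pi} and summing over $y \in S$, the first two bullets of Assumption \ref{ass3} give
\begin{equation*}
\sum_{y \in S} \pi[v](t,x,y) = \sum_{y \in S} \pi_0(t,x,y) + \Delta t \Big\langle \sum_{y \in S} \pi_1(t,x,y),\, v(t,x) \Big\rangle = 1 + 0 = 1.
\end{equation*}
For each $y$, the Cauchy--Schwarz inequality and the third bullet of Assumption \ref{ass3} yield
\begin{equation*}
\pi[v](t,x,y) \geq \pi_0(t,x,y) - \Delta t \, \|\pi_1(t,x,y)\| \, \|v(t,x)\| \geq \pi_0(t,x,y) - \Delta t \, \bar{D} \, \|\pi_1(t,x,y)\| \geq 0,
\end{equation*}
so $\pi[v](t,x,\cdot) \in \mathcal{P}(S)$, i.e.\@ $\pi[v] \in \Pi(\mathcal{T},S)$.

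\medskip

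\textbf{Backward direction (\eqref{eq:ass} $\Rightarrow$ Assumption \ref{ass3}).} I recover the three conditions by testing against well-chosen controls. Taking $v \equiv 0$, which is admissible, gives $\pi[0] = \pi_0$; hence $\pi_0(t,x,\cdot) \in \mathcal{P}(S)$. Next, for any $w \in \R^d$ with $\|w\| \leq \bar{D}$, the constant control $v \equiv w$ is admissible, and comparing $\sum_y \pi[v](t,x,y) = 1$ with $\sum_y \pi_0(t,x,y) = 1$ shows
\begin{equation*}
\Big\langle \sum_{y \in S} \pi_1(t,x,y),\, w \Big\rangle = 0 \qquad \text{for all } \|w\|\leq \bar{D};
\end{equation*}
taking $w = \pm \bar{D} e_i$ for $i=1,\dots,d$ yields $\sum_y \pi_1(t,x,y) = 0$. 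Finally, fix $(t,x,y) \in \mathcal{T}\times S \times S$ with $\pi_1(t,x,y) \neq 0$ (the case $\pi_1(t,x,y) = 0$ is trivial) and choose the admissible control $v$ defined by $v(t,x) = -\bar{D}\, \pi_1(t,x,y)/\|\pi_1(t,x,y)\|$ (and, say, zero elsewhere). The non-negativity $\pi[v](t,x,y) \geq 0$ then reads $\pi_0(t,x,y) - \Delta t\, \bar{D}\, \|\pi_1(t,x,y)\| \geq 0$, which is the third bullet of Assumption \ref{ass3}.

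\medskip

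No part is delicate: both directions are routine consequences of the linearity of $\omega \mapsto \pi(t,x,y,\omega)$. The only minor care needed is in the backward direction, where one must test against sufficiently many controls (including sign-reversed and coordinate-wise probes) to extract the pointwise inequalities and the vanishing sum from the integral identities; this is straightforward because the admissible set $\R^d_{\bar D}(\mathcal{T}\times S)$ is rich enough to separate directions.
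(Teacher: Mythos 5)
Your proof is correct and is exactly the routine argument intended here (the paper itself leaves this proof to the reader): Cauchy--Schwarz gives the forward direction, and testing against $v\equiv 0$, constant controls $\pm\bar{D}e_i$, and the extremal control $-\bar{D}\,\pi_1(t,x,y)/\|\pi_1(t,x,y)\|$ recovers the three conditions of Assumption \ref{ass3}.
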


The proof of the previous lemma is left to the reader.

Thanks to \eqref{eq:pi}, we can simplify \eqref{eq:mfgt} (i)-(ii) with the help of $H^{\bar{D}}$ (defined by \eqref{eq:HD}). Let us define $p_0$, $p_1$, $q_0$, and $q_1$ as follows: for all $(t,x) \in \mathcal{T}\times S$,
\begin{align}
     p_0(t,x) = {} &\sum_{s\in S} \pi_0(t,x,s) u(t+1,s), & p_1(t,x) = {} & \sum_{s\in S} \pi_1 (t,x,s) u(t+1,s);  \label{p}\\
     q_0(t,x) = {} & \sum_{s\in S} \pi_0(t,s,x) m(t,s),  & q_1[v](t,x) = {} & \sum_{s\in S} \langle\pi_1 (t,s,x), v(t,s)  m(t,s)\rangle. \label{q}
\end{align}
Observe that the dependence of $p_0$ and $p_1$ with respect to $u$ is not explicitly mentioned, similarly, the dependence of $q_0$ and $q_1$ with respect to $m$ and $v$ is not explicitly mentioned and will be clear from the context.
Then system \eqref{eq:mfgt} equivalently writes: for all $(t,x) \in \mathcal{T} \times S$,
\begin{equation}\label{eq:dmfg}\tag{DMFG}
\left\{\begin{array}{cll}
\mathrm{(i)} \ & u(t,x) =  \big(-H^{\bar{D}}[p_1](t,x)+f(t,x,m(t))\big) \Delta t + p_0(t,x) ;\\[0.8em]
\mathrm{(ii)} \  & v(t,x) = -H_p^{\bar{D}}[p_1](t,x);\\[0.8em]
\mathrm{(iii)} \ & m(t+1,x) = q_0(t,x) + \Delta t q_1[v](t,x);\\[0.8em]
\ \mathrm{(iv)} \ \ & m(0,x)=m_{0}(x), \quad u(T,x)=g(x).
\end{array}\right.
\end{equation}

\begin{thm}[Existence]\label{thm1}
 Under Assumptions \ref{ass1} and \ref{ass3}, \eqref{eq:dmfg} has at least one solution. Furthermore, if $(\bar{u},\bar{v},\bar{m})$ is a solution of \eqref{eq:dmfg}, then $\bar{m}\in\mathcal{P}(\tilde{\mathcal{T}},S)$.
\end{thm}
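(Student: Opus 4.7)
The plan is to apply Brouwer's fixed-point theorem to the map $\phi = \textbf{FP} \circ \textbf{V} \circ \textbf{HJB}$ introduced just before the theorem, whose fixed points in $\mathcal{P}(\tilde{\mathcal{T}},S)$ correspond exactly to the $m$-components of solutions of \eqref{eq:dmfg}. First I would verify the domain: $\mathcal{P}(\tilde{\mathcal{T}},S)$ is a non-empty convex compact subset of the finite-dimensional space $\R(\tilde{\mathcal{T}} \times S)$, being the $(T+1)$-fold Cartesian product of the probability simplex on $S$. The paper has already recorded that $\phi$ sends this set into itself, using that $\textbf{V}(u)\in \R^d_{\bar{D}}(\mathcal{T}\times S)$ by construction of $\ell^{\bar{D}}$, together with assumption \eqref{eq:ass} and Remark \ref{rem:charac_process}.

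The core task is then to show that $\phi$ is continuous. For $\textbf{HJB}$, the value function is computed by backward induction from $u(T,\cdot)=g$: using \eqref{eq:pi} and Lemma \ref{lem:hD_diff}(2), the dynamic programming equation \eqref{eq:DP} rewrites as $u(t,x)=(-H^{\bar{D}}[p_1](t,x)+f(t,x,\mu(t)))\Delta t+p_0(t,x)$, where $p_0$ and $p_1$ are linear in $u(t+1,\cdot)$; since $H^{\bar{D}}$ is continuous in $p$ (Lemma \ref{lem:hD_diff}(1)) and $f(t,x,\cdot)$ is Lipschitz on $\mathcal{P}(S)$ (Assumption \ref{ass1}(1)), an induction on $t$ yields continuity of $\textbf{HJB}$ in $\mu$. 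For $\textbf{V}$, we have $v(t,x)=-H_p^{\bar{D}}[p_1](t,x)$, which is continuous in $u$ since $H_p^{\bar{D}}$ is $(1/\alpha)$-Lipschitz in $p$ by Lemma \ref{lem:hD_diff}(3) and $p_1$ is linear in $u$. For $\textbf{FP}$, the Kolmogorov equation \eqref{eq:kolmogorov} is a forward linear recursion in $m$ with coefficients $\pi[v](t,\cdot,\cdot)$ depending affinely on $v$ by \eqref{eq:pi}, so again induction on $t$ gives continuity in $v$.

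Combining the three, $\phi$ is continuous on a compact convex subset of a finite-dimensional space, so Brouwer's fixed-point theorem produces $\bar{m}\in \mathcal{P}(\tilde{\mathcal{T}},S)$ with $\phi(\bar{m})=\bar{m}$. Setting $\bar{u}=\textbf{HJB}(\bar{m})$ and $\bar{v}=\textbf{V}(\bar{u})$ then yields a triplet $(\bar{u},\bar{v},\bar{m})$ solving \eqref{eq:dmfg}.

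For the ``furthermore'' part, let $(\bar{u},\bar{v},\bar{m})$ be any solution. The relation $\bar{v}=\textbf{V}(\bar{u})$ minimizes an expression involving $\ell^{\bar{D}}$, which equals $+\infty$ outside the closed ball of radius $\bar{D}$, hence $\|\bar{v}\|_{\infty,\infty}\leq \bar{D}$. By Lemma \ref{lem:trivial} (equivalently \eqref{eq:ass}), $\pi[\bar{v}]$ is a transition process, so Remark \ref{rem:charac_process} combined with $\bar{m}(0)=m_0\in\mathcal{P}(S)$ and the Kolmogorov equation \eqref{eq:mfgt}(iii) yields $\bar{m}(t)\in \mathcal{P}(S)$ for every $t\in\tilde{\mathcal{T}}$ by a forward induction. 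The only mildly delicate point is propagating continuity through the backward recursion defining $\textbf{HJB}$; everything else reduces to routine compactness, affine-dependence, and induction arguments.
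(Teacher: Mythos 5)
Your proposal is correct and follows essentially the same route as the paper: Brouwer's fixed-point theorem applied to $\phi=\textbf{FP}\circ\textbf{V}\circ\textbf{HJB}$ on the compact convex set $\mathcal{P}(\tilde{\mathcal{T}},S)$, with continuity of each of the three maps checked by induction in $t$, and the ``furthermore'' part obtained from $\|\bar{v}\|_{\infty,\infty}\leq\bar{D}$, assumption \eqref{eq:ass}, and Remark \ref{rem:charac_process}. The only difference is that the paper's appendix proves continuity via explicit quantitative Lipschitz-type estimates (notably \eqref{eq:reg_u}, which is reused later), whereas your argument is qualitative, which suffices here.
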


\begin{proof}[Proof (first part)]
We equip the finite-dimensional space $\mathbb{R}(\tilde{\mathcal{T}}\times S)$ with the norm $\|\cdot\|_{\infty,1}$.
The set $\mathcal{P}(\tilde{\mathcal{T}},S)$ is non-empty, convex, and compact. In order to prove the existence of a solution, we need to show the existence of fixpoint for the map $\phi$, defined in \eqref{def:phi}. By the Brouwer fixed-point theorem, it suffices to show that $\phi$ is a continuous mapping,
which we do in the appendix (page \pageref{proof:thm1}).
\end{proof}

\subsection{A fundamental inequality}

Let us define a perturbed version of \eqref{eq:dmfg} with additional terms $(\eta,\delta) \in \mathbb{R}^2(\mathcal{T}\times S)$ in the right-hand side: for all $(t,x) \in \mathcal{T} \times S$,
\begin{equation}\label{eq:dmfgd}\tag{PDMFG}
\left\{\begin{array}{cll}
\mathrm{(i)} \ & u(t,x) =  \Big(-H^{\bar{D}}[p_1](t,x)+f(t,x,m(t))\Big) \Delta t + p_0(t,x) + \eta(t,x);\\[0.8em]
\mathrm{(ii)} \  & v(t,x) = -H_p^{\bar{D}}[p_1](t,x);\\[0.8em]
\mathrm{(iii)} \ & m(t+1,x) = q_0(t,x) + \Delta t q_1[v](t,x) + \delta(t,x);\\[0.8em]
\ \mathrm{(iv)} \ \ & m(0,x)=m_{0}(x), \quad u(T,x)=g(x).
\end{array}\right.
\end{equation}
The fundamental inequality proved in the next proposition is an essential tool in the stability analysis for the system \eqref{eq:dmfg}.

\begin{prop}[Fundamental inequality]\label{prop:error_fund}
Let Assumptions \ref{ass1} and \ref{ass3} hold true. Let $(\bar{u},\bar{v},\bar{m})$ be a solution of \eqref{eq:dmfg} and let $(u,v,m)$ satisfy \eqref{eq:dmfgd} with $m\geq 0$. Then, the following inequality holds:
\begin{equation}\label{eq:fund}
      \frac{\Delta t \alpha}{2}
      \sum_{t \in\mathcal{T}}\sum_{x\in S} \| (v - \bar{v})(t,x) \|^2(m+\bar{m})(t,x)
      \leq  \sum_{t\in\mathcal{T}} \sum_{x\in S} (u-\bar{u})(t+1,x) \delta(t,x) + (\bar{m}-m) (t,x)\eta (t,x).
\end{equation}
\end{prop}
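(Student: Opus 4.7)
The plan is to establish a pointwise inequality via two symmetric applications of Lemma \ref{lem:hD_quad}, then use the HJB and FP equations to rewrite the right-hand side in terms of $(u-\bar u)$, $(m-\bar m)$, the perturbations $\eta, \delta$, and a coupling term killed by monotonicity. A telescoping argument exploiting the adjoint structure of $\pi_0$ and the boundary conditions will then deliver \eqref{eq:fund}.

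First I would apply Lemma \ref{lem:hD_quad} twice at each $(t,x)$: once with parameters $(v,m,\bar v,\bar m,\bar p_1(t,x))$ and once with the roles of barred and unbarred quantities swapped, using $p_1(t,x)$. Since $v= -H_p^{\bar D}[p_1]$ and $\bar v= -H_p^{\bar D}[\bar p_1]$ come from the feedback relations in \eqref{eq:dmfgd}(ii) and \eqref{eq:dmfg}(ii), the hypotheses of the lemma are satisfied. Adding the two inequalities cancels the $\ell^{\bar D}$-terms and yields
\begin{equation*}
\tfrac{\alpha}{2}\|v-\bar v\|^2(m+\bar m) \leq \bigl(H^{\bar D}[p_1]-H^{\bar D}[\bar p_1]\bigr)(m-\bar m) + \bigl\langle \bar p_1 - p_1,\, mv-\bar m\bar v\bigr\rangle.
\end{equation*}

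Next I would substitute the HJB and FP equations. Subtracting the HJB equations in \eqref{eq:dmfg}(i) and \eqref{eq:dmfgd}(i) gives
\begin{equation*}
\Delta t\bigl(H^{\bar D}[p_1]-H^{\bar D}[\bar p_1]\bigr) = -(u-\bar u)(t,x) + \Delta t\bigl(f(t,x,m)-f(t,x,\bar m)\bigr) + (p_0-\bar p_0)(t,x) + \eta(t,x).
\end{equation*}
Similarly, subtracting the FP equations yields
\begin{equation*}
\Delta t\bigl(q_1[v]-\bar q_1[\bar v]\bigr)(t,x) = (m-\bar m)(t+1,x) - (q_0-\bar q_0)(t,x) - \delta(t,x).
\end{equation*}
Multiplying the pointwise inequality by $\Delta t$, summing over $(t,x)\in\mathcal{T}\times S$, and substituting these two identities produces a right-hand side involving $(u-\bar u)$, $(m-\bar m)$, the coupling term $\Delta t\sum_{t,x}(f(t,x,m)-f(t,x,\bar m))(\bar m -m)$, and cross-terms with $p_0-\bar p_0$ and $q_0-\bar q_0$. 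For the cross-term coming from $\langle \bar p_1-p_1, mv-\bar m\bar v\rangle$, I would swap the order of summation using the definitions \eqref{p}–\eqref{q}: exchanging the dummy indices exhibits the adjoint relation
\begin{equation*}
\sum_{x\in S}(m-\bar m)(t,x)(p_0-\bar p_0)(t,x) = \sum_{s\in S}(u-\bar u)(t+1,s)(q_0-\bar q_0)(t,s),
\end{equation*}
so the $p_0$ and $q_0$ contributions cancel exactly. The coupling term is nonpositive by Assumption \ref{ass1}(3) (monotonicity of $f$), hence can be discarded.

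What remains on the right-hand side, besides $\sum_{t,x}(u-\bar u)(t+1,x)\delta(t,x) + (\bar m - m)(t,x)\eta(t,x)$, is the bulk expression
\begin{equation*}
\sum_{t\in\mathcal{T}}\sum_{x\in S}\Bigl[(m-\bar m)(t,x)(u-\bar u)(t,x) - (u-\bar u)(t+1,x)(m-\bar m)(t+1,x)\Bigr],
\end{equation*}
which telescopes in $t$ to $\sum_{x}[(m-\bar m)(0,x)(u-\bar u)(0,x) - (u-\bar u)(T,x)(m-\bar m)(T,x)]$. Both boundary terms vanish since $m(0,\cdot)=\bar m(0,\cdot)=m_0$ and $u(T,\cdot)=\bar u(T,\cdot)=g$. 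Collecting what survives gives precisely \eqref{eq:fund}. I expect the only delicate step to be the index swap producing the $p_0$/$q_0$ cancellation; the rest is bookkeeping that the adjoint structure of the scheme is set up to make clean.
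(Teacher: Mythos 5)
Your proposal is correct and follows essentially the same route as the paper's first (``direct'') proof: the symmetric application of Lemma \ref{lem:hD_quad} and addition of the two resulting inequalities is exactly equivalent to the paper's use of inequality \eqref{eq:nesterov} on the two Bregman divergences of $H^{\bar D}$ in \eqref{eq:fund10}, and your subsequent substitution of the HJB/FP differences, the adjoint cancellation of the $p_0$/$q_0$ terms, and the telescoping with the boundary conditions reproduce the sum-by-parts identities \eqref{eq:fund1}--\eqref{eq:fund7}. One small slip: your displayed pointwise inequality should read $\bigl(H^{\bar D}[\bar p_1]-H^{\bar D}[p_1]\bigr)(m-\bar m)$ rather than $\bigl(H^{\bar D}[p_1]-H^{\bar D}[\bar p_1]\bigr)(m-\bar m)$ (otherwise the coupling term would come out with the wrong sign and could not be discarded by monotonicity); your later bookkeeping, which writes the coupling term against $(\bar m-m)$, shows you in fact used the correct sign.
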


This fundamental inequality is of the same nature as the one established in \cite[Sec.\@ 3.3]{achdou2013mean}. We provide two different proofs of Proposition \ref{prop:error_fund} in the next subsection.
The fundamental inequality allows us to show the uniqueness of the solution to \eqref{eq:dmfg}.

\begin{lem}[Uniqueness]\label{lemma:unique}
  Under Assumptions \ref{ass1}-\ref{ass3}, \eqref{eq:dmfg} has a unique solution.
\end{lem}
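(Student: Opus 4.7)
The plan is to apply the fundamental inequality (Proposition \ref{prop:error_fund}) with two solutions of \eqref{eq:dmfg} and conclude that they must coincide, using a forward induction argument on $m$ followed by the backward recursion on $u$.

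Let $(u_1, v_1, m_1)$ and $(u_2, v_2, m_2)$ be two solutions of \eqref{eq:dmfg}. First I would apply Proposition \ref{prop:error_fund} with $(\bar{u},\bar{v},\bar{m}) = (u_1,v_1,m_1)$ and $(u,v,m) = (u_2,v_2,m_2)$. Since both triples solve \eqref{eq:dmfg}, the perturbations vanish ($\eta \equiv 0$ and $\delta \equiv 0$), and Theorem \ref{thm1} ensures $m_2 \geq 0$. The right-hand side of \eqref{eq:fund} is therefore zero, so
\begin{equation*}
\frac{\Delta t \alpha}{2} \sum_{t \in \mathcal{T}} \sum_{x \in S} \| v_1(t,x) - v_2(t,x) \|^2 \bigl( m_1(t,x) + m_2(t,x) \bigr) \leq 0.
\end{equation*}
Since $\alpha>0$ and each term in the sum is nonnegative, we deduce that $v_1(t,x) = v_2(t,x)$ whenever $m_1(t,x) + m_2(t,x) > 0$.

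Next I would prove by forward induction on $t$ that $m_1(t,\cdot) = m_2(t,\cdot)$. The base case $t=0$ is immediate from the initial condition $m_1(0,\cdot)=m_2(0,\cdot)=m_0$. For the inductive step, assume $m_1(t,\cdot) = m_2(t,\cdot) =: m(t,\cdot)$. Then the term $q_0(t,x)= \sum_{s} \pi_0(t,s,x) m(t,s)$ appearing in \eqref{eq:dmfg}(iii) is the same for both solutions. For $q_1$, observe that
\begin{equation*}
q_1[v_i](t,x) = \sum_{s \in S} \langle \pi_1(t,s,x), v_i(t,s) m(t,s) \rangle,
\end{equation*}
and the summand vanishes when $m(t,s)=0$, while for $s$ with $m(t,s)>0$ we have $m_1(t,s)+m_2(t,s) = 2m(t,s) > 0$, so $v_1(t,s) = v_2(t,s)$ by the previous step. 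Therefore $q_1[v_1](t,x) = q_1[v_2](t,x)$ for every $x$, which yields $m_1(t+1,\cdot) = m_2(t+1,\cdot)$ and closes the induction.

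Once $m_1 = m_2$, both value functions $u_1$ and $u_2$ satisfy the same backward dynamic programming equation \eqref{eq:dmfg}(i) with the same terminal condition $u_i(T,\cdot)=g$, which uniquely determines $u(t,\cdot)$ from $u(t+1,\cdot)$ via the formula $u(t,x) = (-H^{\bar{D}}[p_1](t,x) + f(t,x,m(t)))\Delta t + p_0(t,x)$. A straightforward backward induction gives $u_1 = u_2$. Finally, \eqref{eq:dmfg}(ii) expresses $v_i$ as a deterministic function of $u_i$, so $v_1 = v_2$ everywhere, completing the proof. The main obstacle is the induction step where $v_1$ and $v_2$ are only known to coincide on the support of $m_1+m_2$; the key observation is that this support is exactly where the coefficients $m(t,s)$ in $q_1$ are nonzero, so the possible discrepancy of $v_i$ off the support is harmless.
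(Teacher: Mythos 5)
Your proof is correct and follows essentially the same route as the paper: apply the fundamental inequality with zero perturbations to get $\|v_1-v_2\|(m_1+m_2)=0$, propagate $m_1=m_2$ forward in time using that the velocity discrepancy only occurs where the density vanishes, then recover $u_1=u_2$ and $v_1=v_2$ from the dynamic programming equation and the feedback relation. The paper organizes the forward induction as a homogeneous recursion for $\mu=m_1-m_2$ with a vanishing source term $(v_1-v_2)m_2$, but this is the same argument as your term-by-term comparison of $q_0$ and $q_1$.
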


\begin{proof}
The existence result was already established in Theorem \ref{thm1}. Let $(u_1,v_1,m_1)$ and $(u_2,v_2,m_2)$ be two solutions of \eqref{eq:dmfg}. By Theorem \ref{thm1}, $m_1\geq$ and $m_2\geq 0$. 
Viewing $(u_2,v_2,m_2)$ as a solution to \eqref{eq:dmfgd} with $(\eta,\delta)=(0,0)$, we deduce from the fundamental inequality that
\begin{equation*}
\| v_1(t,x)-v_2(t,x) \| (m_1(t,x)+m_2(t,x))= 0.
\end{equation*}
Thus for any $(t,x)\in \mathcal{T}\times S$, either $v_1(t,x) = v_2(t,x)$, or $m_1(t,x)=m_2(t,x)=0$. Let $\mu=m_1-m_2$, then $\mu$ satisfies the following equation: for any $(t,x) \in \mathcal{T}\times S$,
\begin{equation*}
 \begin{cases}
 \begin{array}{rl}
       \mu(t+1,x) = & \! \! \! \sum_{s\in S} \pi[v_1](t,s,x) \mu(t,s) +  \Delta t \sum_{s\in S} \langle \pi_1(t,s,x), (v_1-v_2) m_2(t,s) \rangle, \quad \\
       \mu(0,x) = & \! \! \! 0.
       \end{array}
 \end{cases}
 \end{equation*}
It follows by induction that $\mu =0$, i.e.\@ $m_1=m_2$. 
Then $u_1= \textbf{HJB}(m_1)= \textbf{HJB}(m_2)= u_2$ and $v_1= \textbf{V}(u_1)= \textbf{V}(u_2)= v_2$, which concludes the proof.
\end{proof}

\subsection{Two proofs of the fundamental inequality}
{In this subsection, $(u,v,m)$ is a solution of \eqref{eq:dmfgd} and $(p_0,p_1)$ is defined by \eqref{p}.}
Let $(\bar{u},\bar{v},\bar{m})$ be a solution to \eqref{eq:dmfg}.
Let $(\bar{p}_0, \bar{p}_1)$ be defined by \eqref{p}, for the triplet $(\bar{u},\bar{v},\bar{m})$.
The following sum-by-parts formulas will be used in both two methods of proof. For all $t \in \mathcal{T}$,
\begin{align}
    \sum_{x\in S} \bar{p}_0 \bar{m} (t,x) + \Delta t\langle \bar{p}_1,  \bar{m}\bar{v} \rangle(t,x) & = \sum_{y\in S} \bar{u}(t+1,y) \bar{m} (t+1,y) ; \label{eq:fund1}\\
    \sum_{x\in S} \bar{p}_0 m (t,x) + \Delta t\langle \bar{p}_1, m v \rangle(t,x) & = \sum_{y\in S} \bar{u}(t+1,y) m (t+1,y) -  \sum_{y\in S} \bar{u}(t+1,y) \delta(t,y); \label{eq:fund2}\\
    \sum_{x\in S} {p}_0 \bar{m} (t,x) + \Delta t\langle {p}_1, \bar{m}\bar{v} \rangle(t,x) & = \sum_{y\in S} {u}(t+1,y) \bar{m} (t+1,y);\label{eq:fund3}\\
    \sum_{x\in S} {p}_0 m (t,x) + \Delta t\langle {p}_1, m v\rangle(t,x) & = \sum_{y\in S} {u}(t+1,y) m (t+1,y) -  \sum_{y\in S} {u}(t+1,y) \delta(t,y). \label{eq:fund4}
\end{align}
For proving \eqref{eq:fund1}, one simply needs to multiply the first equation in \eqref{p} by $\bar{m}(t,x)$, to multiply the second equation in \eqref{p} by $\bar{m}\bar{v}(t,x)$ and to sum the results over $x$. This yields
\begin{equation*}
\begin{split}
    \sum_{x\in S} \bar{p}_0 \bar{m} (t,x) + \Delta t\langle \bar{p}_1,  \bar{m}\bar{v} \rangle(t,x) = & \sum_{x\in S} \sum_{s\in S}  \bar{u}(t+1,s) \Big(\pi_0(t,x,s)  \bar{m}(t,x)+ \Delta t \langle \pi_1 (t,x,s), \bar{v}\bar{m}(t,s)\rangle \Big).
\end{split}
\end{equation*}
Then \eqref{eq:fund1} follows from \eqref{eq:dmfg}-(iii).
The proofs of the other three equations can be obtained similarly. We provide now two different proofs of Proposition \ref{prop:error_fund}.

\paragraph{1. Direct method} We follow \cite{achdou2013mean}. Summing the difference of \eqref{eq:fund2} and \eqref{eq:fund1} over $t\in \mathcal{T}$, we get
\begin{equation}\label{eq:fund5}
    \sum_{t=1}^{T}\sum_{x\in S} \bar{u}(m-\bar{m})(t,x) = \sum_{t\in\mathcal{T}}\sum_{x\in S} \bar{p}_0(m-\bar{m})(t,x) + \Delta t\langle \bar{p}_1, m v -\bar{m}\bar
    v\rangle(t,x) + \bar{u}(t+1,x)\delta(t,x).
\end{equation}
In addition, summing the difference of \eqref{eq:fund4} and \eqref{eq:fund3} over $t\in \mathcal{T}$, we get
\begin{equation}\label{eq:fund6}
    \sum_{t=1}^{T}\sum_{x\in S} {u}(m-\bar{m})(t,x) = \sum_{t\in\mathcal{T}}\sum_{x\in S} {p}_0(m-\bar{m})(t,x) + \Delta t\langle {p}_1, m v -\bar{m}\bar
    v\rangle(t,x) + {u}(t+1,x)\delta(t,x).
\end{equation}
Taking the difference of \eqref{eq:fund6} and \eqref{eq:fund5}, we have
\begin{equation}\label{eq:fund7}
\begin{split}
   &  \sum_{t=1}^{T}\sum_{x\in S} (u-\bar{u})(m-\bar{m})(t,x) \\
   & \quad = \sum_{t\in\mathcal{T}}\sum_{x\in S} (p_0-\bar{p}_0)(m-\bar{m})(t,x) + \Delta t\langle {p}_1 - \bar{p}_1, m v -\bar{m}\bar
    v\rangle(t,x) + (u-\bar{u})(t+1,x)\delta(t,x).
\end{split}
\end{equation}
Moreover, taking the difference of  \eqref{eq:dmfgd} (i) and \eqref{eq:dmfg} (i), multiplying the result by $m-\bar{m}$, summing over $(t,x)\in\mathcal{T}\times S$, we obtain that
\begin{equation}\label{eq:fund9}
\begin{split}
     & \sum_{t\in\mathcal{T}}\sum_{x\in S} (u-\bar{u})(m-\bar{m})(t,x) \\
    & \quad =  \sum_{t\in\mathcal{T}}\sum_{x\in S} (p_0-\bar{p}_0)(m-\bar{m})(t,x) + \Delta t\big(H^{\bar{D}}[\bar{p}_1] - H^{\bar{D}}[p_1]\big)(m-\bar{m})(t,x) \\
      & \quad \qquad + \sum_{t\in\mathcal{T}}\sum_{x\in S}\Delta t\big(f(t,x,m(t)) - f(t,x,\bar{m}(t))\big)(m-\bar{m})(t,x) + \eta(m-\bar{m})(t,x).
\end{split}
\end{equation}
Comparing  \eqref{eq:fund7} and \eqref{eq:fund9} and using the relations $v=-H_p^{\bar{D}}[p_1]$, $\bar{v} = -H_p^{\bar{D}}[\bar{p}_1]$, we obtain the following equality:
\begin{equation}\label{eq:fund10}
\begin{split}
     &\Delta t \sum_{t\in\mathcal{T}}\sum_{x\in S}  m\Big( H^{\bar{D}}[\bar{p}_1] - H^{\bar{D}}[p_1] - \langle H_p^{\bar{D}}[p_1] , \bar{p}_1 - p_1\rangle \Big)(t,x) \\
     & \qquad +  \Delta t \sum_{t\in\mathcal{T}}\sum_{x\in S}  \bar{m}\Big(H^{\bar{D}}[p_1] - H^{\bar{D}}[\bar{p}_1] - \langle H_p^{\bar{D}}[\bar{p}_1] , p_1-\bar{p}_1\rangle \Big)(t,x)\\
    & \qquad  +  \Delta t\sum_{t\in\mathcal{T}}\sum_{x\in S}\big(f(t,x,m(t)) - f(t,x,\bar{m}(t))\big)(m-\bar{m})(t,x)\\
    & \qquad \qquad =   \sum_{t\in\mathcal{T}}\sum_{x\in S} (u-\bar{u})(t+1,x)\delta(t,x) +  (\bar{m}-m)(t,x)\eta(t,x).
\end{split}
\end{equation}
Since $H^{\bar{D}}$ is convex and $H_p^{\bar{D}}$ is $1/\alpha$-Lipschitz, we obtain with inequality \eqref{eq:nesterov} (see \cite[Thm.\@ 2.1.5]{nesterov2018lectures}) that
\begin{align*}
     H^{\bar{D}}[\bar{p}_1] - H^{\bar{D}}[p_1] - \langle H^{\bar{D}}_p[p_1] , \bar{p}_1 - p_1\rangle \geq \frac{\alpha}{2} \big\| H^{\bar{D}}_p[p_1] - H^{\bar{D}}_p[\bar{p}_1] \big\|^2 =\frac{\alpha}{2}\|v-\bar{v}\|^2;\\
     H^{\bar{D}}[p_1] - H^{\bar{D}}[\bar{p}_1] - \langle H^{\bar{D}}_p[\bar{p}_1] , p_1 - \bar{p}_1\rangle \geq \frac{\alpha}{2} \big\| H^{\bar{D}}_p[p_1] - H^{\bar{D}}_p[\bar{p}_1] \big\|^2 =\frac{\alpha}{2}\|v-\bar{v}\|^2.
\end{align*}
We substitute the last two inequalities into \eqref{eq:fund10}. Then, inequality \eqref{eq:fund} follows from the non-negativity of $m$ and $\bar{m}$ and the monotonicity of $f$ in Assumption \ref{ass1}.

\paragraph{2. Variational method} Let us define a ``relative" potential function $\tilde{J}_{\bar{m}}  \colon  \mathbb{R}^d(\mathcal{T}\times S)\times \mathcal{P}(\mathcal{\tilde{T}}, S) \rightarrow \mathbb{R}$, 
\begin{equation*}
     \tilde{J}_{\bar{m}}(v,m) = \Delta t \sum_{t\in \mathcal{T}}\sum_{x\in S} m(t,x) \Big( \ell^{\bar{D}}(t,x, v(t,x)) + f(t,x,\bar{m}(t)) \Big) +  \sum_{x\in S} g(x) m(T,x).
\end{equation*}
Note that in the above function $\tilde{J}_{\bar{m}}$,
the third variable of $f$ is fixed to $\bar{m}$.
The second proof of Proposition \ref{prop:error_fund} consists in proving a lower bound and an upper bound of $\tilde{J}_{\bar{m}}(v,m) - \tilde{J}_{\bar{m}}(\bar{v},\bar{m})$, from which the fundamental inequality directly follows.

\smallskip

\noindent \textbf{Step 1.} Let us prove that 
\begin{equation*}
    \tilde{J}_{\bar{m}}(v,m) - \tilde{J}_{\bar{m}}(\bar{v},\bar{m}) \geq  \sum_{t\in\mathcal{T}} \sum_{x\in S}\bar{u}(t+1,x) \delta(t,x) +  \Delta t  \sum_{t\in\mathcal{T}} \sum_{x\in S}\frac{\alpha}{2}\|v - \bar{v}\|^2 m (t,x).
\end{equation*}
By Lemma \ref{lem:hD_quad}, we have
\begin{equation*}
\begin{split}
    \left(\ell^{\bar{D}}[v] m - \ell^{\bar{D}}[\bar{v}]\bar{m}\right) \Delta t \geq {} & \left(- H^{\bar{D}}[\bar{p}_1] (m-\bar{m}) - \langle \bar{p}_1, mv - \bar{m}\bar{v} \rangle + \frac{\alpha}{2} \|v - \bar{v}\|^2 m\right) \Delta t\\ 
     ={} & \left( \bar{u} - \bar{p}_0 - \Delta t f(t,x,\bar{m}(t)) \right)(m-\bar{m}) - \Delta t\langle  \bar{p}_1, mv - \bar{m}\bar{v} \rangle + \Delta t\frac{\alpha}{2} \|v - \bar{v}\|^2 m\\
     ={} &\bar{u} (m-\bar{m}) + \left( \bar{p}_0 \bar{m} +\Delta t \langle \bar{p}_1 , \bar{m} \bar{v} \rangle\right) -\left( \bar{p}_0 {m} + \Delta t\langle \bar{p}_1 , {m} {v} \rangle\right)\\
     &\quad - \Delta tf(t,x,\bar{m}(t)) (m-\bar{m}) + \Delta t \frac{\alpha}{2} \|v - \bar{v}\|^2 m.
\end{split}
\end{equation*}
It follows that
\begin{equation*}
\begin{split}
    &\tilde{J}_{\bar{m}}(v,m) - \tilde{J}_{\bar{m}}(\bar{v},\bar{m}) \\
   & \qquad = \Delta t  \sum_{t\in\mathcal{T}}\sum_{x\in S}\left(\ell^{\bar{D}}[v] m - \ell^{\bar{D}}[\bar{v}]\bar{m} + f(t,x,\bar{m}(t))(m-\bar{m})\right)(t,x)  +  \sum_{x\in S} g(x) (m-\bar{m})(T,x) \\
  & \qquad  \geq  \sum_{t\in\mathcal{T}}\sum_{x\in S}\bar{u} (m-\bar{m}) + \left( \bar{p}_0 \bar{m} + \Delta t\langle \bar{p}_1 , \bar{m} \bar{v} \rangle\right) -\left( \bar{p}_0 {m} +\Delta t \langle \bar{p}_1 , {m} {v} \rangle\right)+ \Delta t \frac{\alpha}{2} \|v - \bar{v}\|^2 m \\
     & \qquad \qquad + \sum_{x\in S} g(x) (m-\bar{m})(T,x)\\ 
    & \qquad =  \sum_{t\in\mathcal{T}} \sum_{x\in S}\bar{u}(t+1,x) \delta(t,x) +   \Delta t  \sum_{t\in\mathcal{T}} \sum_{x\in S}\frac{\alpha}{2}\|v - \bar{v}\|^2 m (t,x),
\end{split}
\end{equation*}
where the last equality was obtained with \eqref{eq:fund1} and \eqref{eq:fund2}.

\smallskip

\noindent \textbf{Step 2.} Let us prove that
\begin{equation*}
    \tilde{J}_{\bar{m}}(v,m) - \tilde{J}_{\bar{m}}(\bar{v},\bar{m}) \leq  \sum_{t\in\mathcal{T}} \sum_{x\in S} {u}(t+1,x) \delta(t,x) - \eta(m-\bar{m})(t,x) - \Delta t \frac{\alpha}{2}\|v - \bar{v}\|^2 \bar{m} (t,x) .
\end{equation*}
Since $v$  satisfies \eqref{eq:dmfgd}-(ii), by Fenchel's relation \cite[Cor.\@ 1.4.4]{JBHU}, we have 
\begin{equation*}
    \ell^{\bar{D}}[v]  = -\langle p_1, v \rangle - H^{\bar{D}}[p_1], \qquad -p_1\in \partial \ell^{\bar{D}}[v].
\end{equation*}
Then, by the $\alpha$-strong convexity of $\ell$ and the last equality, we have
\begin{equation*}
    \ell^{\bar{D}}[\bar{v}] \geq \ell^{\bar{D}}[v] + \langle -p_1, \bar{v} - v \rangle + \frac{\alpha}{2}\|\bar{v} - v\|^2 = -H^{\bar{D}}[p_1] - \langle p_1, \bar{v} \rangle + \frac{\alpha}{2}\|\bar{v} - v\|^2 .
\end{equation*}
Using the nonnegativity of $m$ and $\bar{m}$, we obtain that
\begin{equation*}
\begin{split} 
&    \left(\ell^{\bar{D}}[v]m - \ell^{\bar{D}}[\bar{v}]\bar{m} \right) \Delta t \leq \left(-H^{\bar{D}}[p_1](m-\bar{m}) - \langle p_1, mv - \bar{m}\bar{v} \rangle -\frac{\alpha}{2}\|\bar{v} - v\|^2 \bar{m}\right) \Delta t\\
& \quad \qquad    =  \left( {u} - {p}_0 -  \Delta t f(t,x,m(t) )  -\eta \right)(m-\bar{m}) - \Delta t \langle {p}_1, mv - \bar{m}\bar{v} \rangle -  \Delta t \frac{\alpha}{2}\|\bar{v} - v\|^2 \bar{m}\\
& \quad \qquad = {u} (m-\bar{m}) + \left( {p}_0 \bar{m} + \Delta t\langle {p}_1 , \bar{m} \bar{v} \rangle\right) -\left( {p}_0 {m} + \Delta t  \langle {p}_1 , {m} {v} \rangle\right)\\
 & \quad \qquad \qquad - \left(   \Delta t f(t,x,{m}(t))   + \eta\right) (m-\bar{m}) -  \Delta t \frac{\alpha}{2}\|\bar{v} - v\|^2 \bar{m}.
\end{split}
\end{equation*}
It follows that
\begin{equation*}
\begin{split}
    &\tilde{J}_{\bar{m}}(v,m) - \tilde{J}_{\bar{m}}(\bar{v},\bar{m}) \\
   & \quad = \Delta t  \sum_{t\in \mathcal{T}}\sum_{x\in S}\left(\ell^{\bar{D}}[v] m - \ell^{\bar{D}}[\bar{v}]\bar{m} + f(t,x,\bar{m}(t))(m-\bar{m})\right)(t,x)  +  \sum_{x\in S} g(x) (m-\bar{m})(T,x)\\ 
   & \quad \leq  \sum_{t\in\mathcal{T}}\sum_{x\in S}  {u} (m-\bar{m}) + \left( {p}_0 \bar{m} +\Delta t  \langle {p}_1 , \bar{m} \bar{v} \rangle\right) -\left( {p}_0 {m} + \Delta t \langle {p}_1 , {m} {v} \rangle\right) +  \sum_{x\in S} g(x) (m-\bar{m})(T,x)\\ 
    & \quad \quad -   \sum_{t\in\mathcal{T}}\sum_{x\in S} \Delta t \left( f(t,x,m(t)) - f(t,x,\bar{m}(t))\right)(m-\bar{m}) (t,x) \\
    & \quad \quad -  \sum_{t\in\mathcal{T}}\sum_{x\in S} \eta (m-\bar{m})(t,x) +\Delta t \frac{\alpha}{2}\|v - \bar{v}\|^2 \bar{m}(t,x) \\
   & \quad \leq  \sum_{t\in \mathcal{T}} \sum_{x\in S} {u}(t+1,x) \delta(t,x) - \eta(t,x)(m-\bar{m})(t,x) - \Delta t \frac{\alpha}{2}\|v - \bar{v}\|^2 \bar{m} (t,x) ,
\end{split}
\end{equation*}
where the last inequality is a consequence of \eqref{eq:fund3}, \eqref{eq:fund4}, and the monotonicity of $f$.

\section{Stability analysis for the theta-scheme}
\label{sec:stability}

We turn back to the stability analysis of the theta-scheme. It consists of two steps: the fundamental inequality, which is obtained by formulating \eqref{eq:dmfg} as a discrete MFG, and an energy estimate for the Kolmogorov equation.

From now on $\ell$, $H$, $g$, $m_0$, and $f$ are again to be understood according to the definitions given in \eqref{eq:grid1} and \eqref{eq:grid2}.

\subsection{Reformulation of the theta-scheme as a discrete MFG}

The goal of this subsection is to show the equivalence between the scheme \eqref{eq:theta_mfg2}
and a discrete MFG of the form \eqref{eq:dmfg}.
Given $D \in (0,\infty]$, define $\ell^D$ as in \eqref{eq:lD} and $H^D$ as in \eqref{eq:HD}. Note that for $D= \infty$, $H^D=H$.
Consider the following system, with unknown variables $u \in \R(\bar{\mathcal{T}} \times S)$, $v \in  \R^d(\mathcal{T} \times S)$, and $m \in \R(\bar{\mathcal{T}} \times S)$:
\begin{equation}\label{eq:theta_mfg}\tag{$\theta$-MFG$(D)$}
\left\{\begin{array}{cll}
\mathrm{(i)} \ & \begin{cases}
        \left( \text{Id} - \theta \sigma\Delta t \Delta_h\right) u(t+1/2)  =  u(t + 1) , \\
         u(t,x) =  \big[  - H^D [\nabla_h u ( \cdot +1/2 , \cdot ) ](t,x) + f(t,x,m(t)) \big]\Delta t \\
         \qquad \qquad    +  \big(\text{Id}+ (1-\theta )\sigma \Delta t \Delta_h  \big)u(t+1/2)(x), 
\end{cases} & \forall (t,x)\in \mathcal{T}\times S;\\
~\\
\mathrm{(ii)} \  & v(t,x) = -H_p^D[\nabla_h u(\cdot+1/2,\cdot)](t,x), &\forall (t,x)\in \mathcal{T}\times S;\\
~\\
\mathrm{(iii)} \ & \begin{cases}
          m(t+1/2) = \big(\text{Id}+ (1-\theta )\sigma \Delta t \Delta_h  \big) m(t)   - \Delta t \text{div}_h \big(v (t)m(t) \big) , \\
           \left( \text{Id} - \theta \sigma\Delta t \Delta_h\right) m(t+1) = m(t+1/2),
    \end{cases} &\forall t\in \mathcal{T};\\
~\\
\ \mathrm{(iv)} \ \ & m(0,x)=m_{0}(x), \quad u(T,x)=g(x), &\forall x\in S.
\end{array}\right.
\end{equation}
Multiplying the dynamic programming equation (i) and the Kolmogorov equation (iii) of \eqref{eq:theta_mfg2} by $\Delta t$, we easily see that \eqref{eq:theta_mfg2} is equivalent to \eqref{eq:theta_mfg} with $D=\infty$.

We recall here the definition of the matrix $B_1$ and introduce a new matrix $B_2$:
\begin{equation} \label{eq:b1b2}
    B_1 = \text{Id} - \theta \sigma\Delta t \Delta_h , \qquad B_2 =  (1-\theta )\sigma  \Delta_h.
\end{equation}
By Lemma \ref{lm:implicit}, the matrix $B_1$ is invertible.

We regard the variables $u$ and $m$ of the system \eqref{eq:theta_mfg} as elements of $\R(\bar{\mathcal{T}} \times S)$, since the auxiliary variables $u(t+1/2,\cdot)$ and $m(t+1/2, \cdot)$ are uniquely determined by $u(t+1,\cdot)$ and $m(t,\cdot)$. In the sequel, we will make use of the following convention: Given $u \in \R(\bar{\mathcal{T}} \times S)$, we denote
\begin{equation} \label{eq:convention}
u(t+1/2,\cdot)= B_1^{-1} u(t+1,\cdot), \quad \forall t \in \mathcal{T}.
\end{equation}

\begin{lem} \label{lem:equi1}
For any $D> 0$, the system  \eqref{eq:theta_mfg} is equivalent to the system \eqref{eq:dmfg} with running cost $\ell$, control bound $\bar{D}=D$, coupling function $f$, final cost $g$, initial distribution $m_0$ and with $\pi_0$ and $\pi_1$ defined by:
\begin{align}
     &\pi_0(t,x,y) = B_1^{-1}(y,x) + \Delta t (B_1^{-1}B_2)(y,x) ,\label{eq:Pi0} \\ &\pi_1(t,x,y) = \Big( \frac{B_1^{-1}(y,x+he_i)- B_1^{-1}(y,x-he_i)}{2h}\Big)_{i=1}^d. \label{eq:Pi1}
\end{align}
\end{lem}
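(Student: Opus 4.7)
The plan is to identify the two systems algebraically by eliminating the auxiliary half-step variables $u(t+1/2,\cdot)=B_1^{-1}u(t+1,\cdot)$ and $m(t+1/2,\cdot)=(\mathrm{Id}+\Delta t\, B_2)m(t,\cdot) - \Delta t\, \mathrm{div}_h(v(t)m(t))$ in \eqref{eq:theta_mfg}, and then matching the resulting equations coefficient by coefficient with \eqref{eq:dmfg}. The preparatory observation is that the centered periodic Laplacian $\Delta_h$ is self-adjoint on $\R(S)$; hence $B_1$, $B_2$ and $B_1^{-1}$ are symmetric matrices, and since they are all polynomials in $\Delta_h$ they commute pairwise. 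In particular the operator $A := B_1^{-1}(\mathrm{Id}+\Delta t\, B_2) = B_1^{-1}+\Delta t\, B_1^{-1}B_2$ is symmetric, with entries satisfying $A(x,y)=A(y,x)=\pi_0(t,x,y)$ by \eqref{eq:Pi0}.

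For the HJB side, substituting $u(t+1/2,\cdot)=B_1^{-1}u(t+1,\cdot)$ into the second equation of \eqref{eq:theta_mfg}(i) turns the diffusion part into $(Au(t+1,\cdot))(x)=\sum_y A(x,y)u(t+1,y)=\sum_y \pi_0(t,x,y)u(t+1,y)=p_0(t,x)$. Next, I will expand $\nabla_h u(t+1/2,x)=\nabla_h(B_1^{-1}u(t+1,\cdot))(x)$ using the definition of the centered gradient and apply the symmetry of $B_1^{-1}$ component by component to obtain $\nabla_h u(t+1/2,x) = \sum_y \pi_1(t,x,y)\, u(t+1,y) = p_1(t,x)$, with $\pi_1$ as in \eqref{eq:Pi1}. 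These two identifications turn \eqref{eq:theta_mfg}(i)--(ii) into \eqref{eq:dmfg}(i)--(ii) verbatim (with $\bar{D}=D$).

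For the Fokker--Planck side, I will combine the two sub-equations of \eqref{eq:theta_mfg}(iii) into
\[
m(t+1,\cdot) = A\, m(t,\cdot) - \Delta t\, B_1^{-1}\mathrm{div}_h(v(t)m(t)).
\]
The first term evaluates as $(A m(t,\cdot))(x) = \sum_s A(x,s)m(t,s) = \sum_s \pi_0(t,s,x)\, m(t,s) = q_0(t,x)$ by the symmetry of $A$ once more. For the divergence term, I will perform a change of summation variable on each of the $2d$ shifted stencils of $\mathrm{div}_h$ in order to transfer the finite differences onto $B_1^{-1}$, producing
\[
-B_1^{-1}\mathrm{div}_h(v(t)m(t))(x) = \sum_{s\in S} \sum_{i=1}^d \frac{B_1^{-1}(x,s+he_i) - B_1^{-1}(x,s-he_i)}{2h}\, (v_i m)(t,s).
\]
Invoking $B_1^{-1}(x,s\pm he_i) = B_1^{-1}(s\pm he_i,x)$ identifies the bracket with the $i$-th component of $\pi_1(t,s,x)$ from \eqref{eq:Pi1}, so that this contribution equals $q_1[v](t,x)$. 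Recombining and noting that the boundary conditions in \eqref{eq:theta_mfg}(iv) coincide verbatim with \eqref{eq:dmfg}(iv) will conclude the identification.

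The only mildly delicate step is the discrete summation-by-parts in the last paragraph, whose role is precisely to motivate the asymmetric shape of \eqref{eq:Pi1}; every other step is index bookkeeping that relies on the self-adjointness of $\Delta_h$ on the torus and on the fact that $B_1,B_2,B_1^{-1}$ are symmetric and commute.
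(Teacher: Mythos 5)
Your proposal is correct and follows essentially the same route as the paper: eliminate the half-step variables via $B_1^{-1}$, use the self-adjointness of $\Delta_h$ (hence the symmetry of $B_1^{-1}$ and $B_1^{-1}B_2$) to identify $\nabla_h u(t+1/2,\cdot)$ with $p_1$ and the diffusion part with $p_0$, and perform the discrete summation by parts on the divergence term to recover $q_0+\Delta t\,q_1[v]$. The only difference is cosmetic: you make the symmetry and commutation of $B_1^{-1}$ and $B_2$ explicit up front, whereas the paper uses them silently inside the index manipulations.
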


\begin{proof}
We make use of the notations $p_0$, $p_1$, $q_0$ and $q_1$, defined as in \eqref{p}-\eqref{q}.
By the definition of $B_1$ and $B_2$, the implicit steps in equations (i) and (iii) are equivalent to
\begin{equation*}
    u(t+1/2) = B^{-1}_1 u(t+1)
    \quad \text{and} \quad m(t+1) = B^{-1}_1 m(t+1/2).
\end{equation*}
Next we verify the equivalences between each of the three equations of the two systems.

\smallskip

\noindent \textbf{Step 1.} Using the definition of $B_1$ and $\pi_1$, we have that
\begin{equation*}
    \begin{split}
        \nabla_h u(t+1/2,x)  & = \left(\frac{u(t+1/2,x+he_i) - u(t+1/2,x-he_i) }{2h} \right)_{i=1}^d\\
        & = \left(\frac{\sum_{y\in S} \Big( B_1^{-1}(x+he_i, y) - B_1^{-1}(x - he_i, y) \Big) u(t+1, y) }{2h}\right)_{i=1}^d \\
        & = \sum_{y\in S} \pi_1(t,x,y) u(t+1,y) = p_1(t,x).
    \end{split}
    \end{equation*}
    The equivalence of the feedback relations follows.
    
\smallskip    
    
\noindent \textbf{Step 2.} The dynamic programming equation is equivalent to 
\begin{equation*}
    u(t,x) = \Big[  - H^D [p_1 ](t,x) + f(t,x,m(t)) \Big]\Delta t + \big( \text{Id} + \Delta t B_2 \big)B^{-1}_1 u(t+1)(x).
\end{equation*}
Observe that
    \begin{equation*}
    \begin{split}
   &    \big( \text{Id} + \Delta t B_2 \big)B^{-1}_1 u(t+1)(x)  \\
       & \qquad = \sum_{y\in S} B_1^{-1}(x,y) u(t+1 , y)  + \Delta t \sum_{z \in S} \sum_{y\in S } B_2(x,z) B_1^{-1}(z, y)  u(t+1 , y) \\
  &  \qquad    = \sum_{y\in S} B_1^{-1}(y,x) u(t+1 , y)  + \Delta t \sum_{y\in S } \Big(\sum_{z \in S}  B_1^{-1}(y, z) B_2(z,x) \Big)  u(t+1 , y)\\
   &  \qquad   = \sum_{y \in S} \pi_0(t,x,y) u(t+1,y) = p_0(x),
    \end{split}
    \end{equation*}
    The equivalence with the dynamic programming equation of \eqref{eq:dmfg} follows.
    
\smallskip
    
\noindent \textbf{Step 3.} The Kolmogorov equation in \eqref{eq:theta_mfg} is equivalent to
\begin{equation*}
\begin{split}
    m(t+1,y) = {} & \sum_{x\in S}  \Big(B^{-1}_1(y,x)  + \Delta t (B_1^{-1}B_2)(y,x) \Big)m(t,x)\\
    & \quad - \Delta t \sum_{i=1}^d \sum_{x\in S} B_1^{-1}(y,x) \frac{v_i(t,x+he_i)m(t,x+he_i) - v_i(t,x-he_i)m(t,x-he_i)}{2h}\\
    = {} & \sum_{x\in S} \Big(B^{-1}(y,x)  + \Delta t (B_1^{-1}B_2)(y,x) \Big)m(t,x)\\
    & \quad + \Delta t  \sum_{x\in S} \sum_{i=1}^d \frac{B_1^{-1}(y,x+he_i)- B_1^{-1}(y,x-he_i)}{2h} v_i(t,x)m(t,x)\\
     ={} & \sum_{x\in S} \pi_0(t,x,y)m(t,x) + \Delta t \langle \pi_1(t,x,y), v(t,x)\rangle m(t,x) \\
     ={} & q_0(t,y) + \Delta t q_1[v](t,y).
\end{split}
\end{equation*}
The lemma is proved.
\end{proof}

\begin{lem} \label{lm:ass}
The maps $\ell$, $f$, and $g$ satisfy Assumption \ref{ass1} with the following constants:
\begin{equation*}
    \alpha = \alpha^c, \quad L_{\ell} = L_{\ell}^c, \quad L_{f} =L_f^c, \quad  L_f' = L_f^c h ^{-d/2},\quad
\text{and} \quad
    L_g = L_g^c.
\end{equation*}
\end{lem}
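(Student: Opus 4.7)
The plan is to verify each item of Assumption \ref{ass1} in turn, exploiting that $\ell$ and $g$ are simple restrictions of their continuous counterparts, while $f$ is obtained by local averaging through the operators $\mathcal{I}_h$ and $\mathcal{R}_h$.

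For $\ell$ and $g$: since $\ell(t,x,v)=\ell^c(t\Delta t,x,v)$ and $g(x)=g^c(x)$, the Lipschitz estimates with constants $L_\ell^c$ and $L_g^c$ are immediate from Assumption \ref{ass:continuous}(1). Strong convexity of $\ell(t,x,\cdot)$ with modulus $\alpha^c$ is likewise inherited directly from Assumption \ref{ass:continuous}(2); the subdifferential characterization in Assumption \ref{ass1}(2) follows because $\ell^c$ is differentiable in $v$, so $\partial_v\ell(t,x,v_1)=\{\ell_v^c(t\Delta t,x,v_1)\}$.

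For the spatial Lipschitz constant of $f$: using the change of variable $y=x+z$ with $z\in B_h(0)$, write $f(t,x,m)=h^{-d}\int_{B_h(0)} f^c(t\Delta t,x+z,\mathcal{R}_h(m))\,dz$, then pass the absolute value under the integral and apply the $L_f^c$-Lipschitz property of $f^c(t,\cdot,m)$ to get $|f(t,x_1,m)-f(t,x_2,m)|\le L_f^c\|x_1-x_2\|$.

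For the Lipschitz constant in $m$: the bottleneck is to relate the discrete $\|\cdot\|_2$ norm on $S$ to the continuous $\mathbb{L}^2$ norm on $\mathbb{T}^d$. A direct computation shows that $\mathcal{R}_h$ is an isometry up to the factor $h^{-d/2}$, namely
\begin{equation*}
\|\mathcal{R}_h(m_1)-\mathcal{R}_h(m_2)\|_{\mathbb{L}^2}^2=\sum_{x\in S}\int_{B_h(x)}\frac{|m_1(x)-m_2(x)|^2}{h^{2d}}\,dy=h^{-d}\|m_1-m_2\|_2^2.
\end{equation*}
Combining this with the $L_f^c$-Lipschitz property of $f^c(t,x,\cdot)$ and bounding the averaging integral yields $L_f'=L_f^c h^{-d/2}$. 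This factor of $h^{-d/2}$ is the main (and expected) loss in the discretization; the rest of the paper must cope with it.

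For monotonicity: rewrite $m_i(x)-m_j(x)=h^d(\mathcal{R}_h(m_i)(y)-\mathcal{R}_h(m_j)(y))$ for $y\in B_h(x)$ and substitute into the discrete monotonicity sum. The sum over $x\in S$ of integrals over $B_h(x)$ reassembles into a single integral over $\mathbb{T}^d$, giving
\begin{equation*}
\sum_{x\in S}\bigl(f(t,x,m_1)-f(t,x,m_2)\bigr)\bigl(m_1(x)-m_2(x)\bigr)=\int_{\mathbb{T}^d}\bigl(f^c[\mathcal{R}_h(m_1)]-f^c[\mathcal{R}_h(m_2)]\bigr)\bigl(\mathcal{R}_h(m_1)-\mathcal{R}_h(m_2)\bigr)dy,
\end{equation*}
which is nonnegative by the monotonicity of $f^c$ in Assumption \ref{ass:continuous}(3). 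There is no real obstacle here; the only care needed is the bookkeeping around $\mathcal{R}_h$ and the factor $h^{-d/2}$ in the $m$-Lipschitz estimate.
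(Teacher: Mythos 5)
Your proof is correct and follows essentially the same route as the paper: the Lipschitz and convexity properties of $\ell$ and $g$ are inherited directly, the constant $L_f'=L_f^c h^{-d/2}$ comes from the scaling identity $\|\mathcal{R}_h(m_1)-\mathcal{R}_h(m_2)\|_{\mathbb{L}^2}=h^{-d/2}\|m_1-m_2\|_2$, and monotonicity is obtained by reassembling the discrete sum into the continuous integral over $\mathbb{T}^d$ via $\mathcal{R}_h$. All of these are exactly the computations in the paper's proof.
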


\begin{proof}
The Lipschitz-continuity of $\ell$, $f$, and $g$, and the strong convexity of $\ell$ are straightforward. For all $(t,x)\in \mathcal{T}\times S$ and $\mu_1,\, \mu_2 \in \mathcal{P}(S)$, we have
\begin{equation*}
    \begin{split}
        f(t,x,\mu_1) - f(t,x,\mu_2) &{} = \frac{1}{h^d} \int_{B_h(x)} f^c\Big(t\Delta t,x, \mathcal{R}_h(\mu_1)\Big) - f^c\Big(t\Delta t,x, \mathcal{R}_h(\mu_2)\Big) dy \\
        &{} \leq L_{f}^c \big\| \mathcal{R}_h(\mu_1) - \mathcal{R}_h(\mu_2) \big\|_{\mathbb{L}^2}  \\
        &{}=  L_{f}^c \Big(\sum_{x\in S}h^d \Big(\frac{\mu_1(x)}{h^d}-\frac{\mu_2(x)}{h^d}\Big)^2\Big)^{1/2} \\
        &{} = L_f^c h^{-d/2} \|\mu_1 -\mu_2\|_2.
    \end{split}
\end{equation*}
This proves the Lipschitz continuity of $f$ with respect to its third variable.
Let us consider again $\mu_1$, $\mu_2 \in \mathcal{P}(S)$ and $t \in \mathcal{T}$. We have
\begin{equation*}
    \begin{split}
         & \sum_{x\in S} (f(t,x,\mu_1)-f(t,x,\mu_2))(\mu_1(x)-\mu_2(x))\\
          & \qquad = \frac{1}{h^d} \sum_{x\in S} \int_{y\in B_h(x)} f^c(t\Delta t,y,\mathcal{R}_h(\mu_1) ) - f^c(t\Delta t,y,\mathcal{R}_h(\mu_2) ) dy ( \mu_1(x) - \mu_2(x)) \\
           & \qquad=   \sum_{x\in S} \int_{y\in B_h(x)} \Big(f^c(t\Delta t,y,\mathcal{R}_h(\mu_1) ) - f^c(t\Delta t,y,\mathcal{R}_h(\mu_2) ) \Big) \Big( \frac{\mu_1(x)}{h^d} - \frac{\mu_2(x)}{h^d} \Big) dy  \\
           & \qquad=  \sum_{x\in S} \int_{y\in B_h(x)} \Big( f^c(t\Delta t,y,\mathcal{R}_h(\mu_1) ) - f^c(t\Delta t,y,\mathcal{R}_h(\mu_2) ) \Big) \Big(\mathcal{R}_h(\mu_1)(y) - \mathcal{R}_h(\mu_2)(y) \Big) dy \\
          & \qquad =   \int_{\mathbb{T}^d} \Big(f^c(t\Delta t,y,\mathcal{R}_h(\mu_1) ) - f^c(t\Delta t,y,\mathcal{R}_h(\mu_2) ) \Big) \Big(\mathcal{R}_h(\mu_1)(y) - \mathcal{R}_h(\mu_2)(y)  \Big) dy \geq 0.
    \end{split}
\end{equation*}
This proves the monotonicity assumption. The lemma is proved.
\end{proof}

\begin{lem}[Lipschitz continuity]\label{lm:lip}
Let $D \in (0,\infty]$. Let $(m,u,v)$ be a solution to \eqref{eq:theta_mfg}. Suppose that $(\Delta t, h)$ satisfies the condition \eqref{cond:CFL3}.
Then for all $t \in \bar{\mathcal{T}}$, $u(t,\cdot)$ and $u(t+1/2,\cdot)$ are $( L_g^c+L_f^c + L_{\ell}^c )$-Lipschitz continuous. Moreover, $\| v \|_{\infty,\infty} \leq M$, where $M$ was defined in \eqref{eq:cons_M}.
\end{lem}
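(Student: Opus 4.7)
I proceed by backward induction on $t\in\tilde{\mathcal{T}}$, showing simultaneously that $u(t,\cdot)$ is Lipschitz with constant $L_t := L_g^c + (T-t)\Delta t(L_\ell^c + L_f^c)$, that $u(t+1/2,\cdot)$ is $L_{t+1}$-Lipschitz, and that $\|v(t,\cdot)\|_\infty\leq M$. Since $L_t \leq L_0 = L_g^c + L_\ell^c + L_f^c$, this yields the lemma; the base case $t=T$ is immediate because $u(T,\cdot)=g$ is $L_g^c$-Lipschitz. For the inductive step, Lemma \ref{lm:implicit}(3) applied to $u(t+1/2,\cdot)=B_1^{-1}u(t+1,\cdot)$ transfers the Lipschitz constant $L_{t+1}$ to $u(t+1/2,\cdot)$, so $\|\nabla_h u(t+1/2,\cdot)\|_{\infty,\infty}\leq \sqrt{d}\,L_{t+1}\leq \sqrt{d}\,L_0$. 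Combining this with Lemma \ref{lem:hD_diff}(4) and the definition \eqref{eq:cons_M} of $M$ gives $\|v(t,\cdot)\|_\infty\leq M$, and the second half of \eqref{cond:CFL3} then forces $|v_i(t,x)|\leq 2(1-\theta)\sigma/h$ for every coordinate $i$ and every $x\in S$.

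To propagate the Lipschitz constant from $u(t+1/2,\cdot)$ to $u(t,\cdot)$, I rewrite the explicit part of \eqref{eq:theta_mfg}(i) in Bellman form via $-H^D(t,x,p)=\inf_{\|\omega\|\leq D}\{\langle p,\omega\rangle+\ell(t,x,\omega)\}$:
\begin{equation*}
u(t,x) = \inf_{\|\omega\|\leq D}\Big\{\sum_{z\in Z}\rho^\omega(z)\,u(t+1/2,\,x+z) + \Delta t\,\ell(t,x,\omega)\Big\} + \Delta t\,f(t,x,m(t)),
\end{equation*}
where $Z := \{0\}\cup\{\pm he_i \mid i=1,\ldots,d\}$, $\rho^\omega(0) := 1 - 2d(1-\theta)\sigma\Delta t/h^2$ and $\rho^\omega(\pm he_i) := (1-\theta)\sigma\Delta t/h^2 \pm \Delta t\,\omega_i/(2h)$. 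These weights are translation-invariant in $x$, sum to one, and become non-negative exactly when $|\omega_i|\leq 2(1-\theta)\sigma/h$ for all $i$; both parts of \eqref{cond:CFL3} together with $\|v(t,\cdot)\|_\infty\leq M$ therefore ensure that $\rho^{v(t,x_2)}$ is a probability measure on $Z$. Using $v(t,x_2)$ as a sub-optimal control at $x_1$ and subtracting the identity at $x_2$, the convex-combination structure bounds $\sum_z\rho^{v(t,x_2)}(z)[u(t+1/2,x_1+z)-u(t+1/2,x_2+z)]$ by $L_{t+1}\|x_1-x_2\|$, while the Lipschitz continuities of $\ell(t,\cdot,\omega)$ and $f(t,\cdot,m(t))$ together contribute at most $\Delta t(L_\ell^c+L_f^c)\|x_1-x_2\|$. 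Hence $u(t,x_1)-u(t,x_2)\leq L_t\|x_1-x_2\|$, and swapping $x_1\leftrightarrow x_2$ closes the induction.

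\emph{Main obstacle.} A naive bound on $\Delta t\,H^D(t,x,\nabla_h u(t+1/2,x))$ through separate $x$- and $p$-Lipschitz estimates for $H^D$ would introduce second-order differences of $u(t+1/2,\cdot)$ that we have no means of controlling a priori. The resolution is to keep $H^D$ in inf-form and to merge the Laplacian weights of $(\text{Id}+(1-\theta)\sigma\Delta t\,\Delta_h)u(t+1/2)$ with the centered-gradient weights of $\Delta t\langle\nabla_h u(t+1/2,x),\omega\rangle$ into the single stochastic kernel $\rho^\omega$. Positivity of $\rho^\omega$ is precisely what \eqref{cond:CFL3} combined with $\|v\|_\infty\leq M$ guarantees, which is why the two inductions, on the Lipschitz constant of $u$ and on the sup-norm of $v$, must be run in parallel rather than sequentially.
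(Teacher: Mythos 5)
Your proof is correct and follows essentially the same route as the paper's: backward induction on the Lipschitz constant $L_t$, transfer through the implicit half-step via Lemma \ref{lm:implicit}(3), the bound $\|v(t,\cdot)\|_\infty\leq M$ via Lemma \ref{lem:hD_diff}(4), and the sub-optimal control argument writing the explicit half-step as a convex combination whose weights are non-negative under \eqref{cond:CFL3}. Your stochastic-kernel notation $\rho^\omega$ is just a cleaner packaging of the coefficients $(1-2dr')$ and $r'\pm\Delta t\,\omega_i/(2h)$ appearing in the paper's proof.
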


\begin{proof}
We define, for any $t \in \bar{\mathcal{T}}$, $L_{t} =  L_g^c + \Delta t ( T-t)(L_{f}^c + L_{\ell}^c ).$
We prove by induction that for any $t \in \bar{\mathcal{T}}$, $u(t,\cdot)$ is $L_t$-Lipschitz continuous. 
The claim is obvious for $t= T$, by the terminal condition.
Suppose that $u(t+1,\cdot)$ is $L_{t+1}$-Lipschitz for some $t\in\mathcal{T}$.
The first equation in \eqref{eq:theta_mfg} is equivalent to the dynamic programming equation:
\begin{equation}\label{eq:dp_u}
    \begin{cases}
        \left( \text{Id} - \theta \sigma\Delta t \Delta_h\right) u(t+1/2)  =  u(t + 1) ; \\
         u(t,x) =   \Delta t \inf_{\omega} \left\{f(t,x,m(t)) + \ell^D(t,x,\omega) + \Big\langle  \omega, \nabla_h u(t+1/2,x)  \Big\rangle\right\} \\
     \qquad \qquad +\big( \text{Id}+ (1-\theta )\sigma \Delta t \Delta_h  \big)u(t+1/2)(x), \qquad \forall \; x \in S.
    \end{cases}
\end{equation}
By the third statement of Lemma \ref{lm:implicit}, we have that $ u(t+1/2,\cdot)$ is $L_{t+1}$-Lipschitz. Therefore, $\|\nabla_h u(t+1/2,\cdot)\|\leq \sqrt{d}(L_g^c+L_f^c+L_{\ell}^c)$.
Next let us take $y \in S$ and let us set $\omega_y= v(t,y)$. We have
\begin{equation*}
    \omega_y = \argmin_{\| \omega\| \leq D} \ \Big( \ell(t,y,\omega) + \big\langle  \omega, \nabla_h u(t+1/2,y)  \big\rangle \Big).
\end{equation*}
By inequality \eqref{eq:bound:Hp} of Lemma \ref{lem:hD_diff}, we have $\| \omega_y \| \leq M$. Let $r' = (1 - \theta) \sigma \Delta t/h^2$, then, for any $x$, we have
\begin{equation*}
    \begin{split}
        u(t,x) - u(t,y)  \leq {} &  \Big( f(t,x,m(t)) - f(t,z,m(t)) + \ell(t,x,\omega_y) - \ell(t,y,\omega_y) \Big)\Delta t \\
       & \quad + (1-2dr') \Big( u (t+1/2,x) - u (t+1/2,y)\Big)\\
       & \quad + \sum_{i=1}^d \Big( r' +\frac{(\omega_z)_i}{2h}\Big)\Big( u(t+1/2,x+he_i) -u(t+1/2,y+he_i) \Big) \\
       & \quad + \sum_{i=1}^d\Big( r' -\frac{(\omega_z)_i}{2h}\Big)\Big( u(t+1/2,x-he_i) -u(t+1/2,y-he_i) \Big).
    \end{split}
\end{equation*}
The coefficients $\big(1-2dr' \big)$,  $\big( r' +\frac{(\omega_z)_i}{2h} \big)$, and $\big( r' -\frac{(\omega_z)_i}{2h} \big)$ are positive by the condition \eqref{cond:CFL3}. Thus,
\begin{equation*}
     u(t,x) - u(t,z) \leq \|x-z\|  \left(\Delta t(L_f^c + L_{\ell}^c) + L_{t+1}\right)
     \leq L_{t}\|x-z\|,
\end{equation*}
where $L_t= L_g + \Delta t ( T-t)(L_{f} + L_{\ell})\leq L_g + L_f + L_{\ell}$. Since $x$ and $y$ are arbitrary, we deduce that $u(t,\cdot)$ is $L_t$-Lipschitz. 
The claim is proved. We have meanwhile established that $\| v \|_{\infty,\infty} \leq M$. The lemma is proved, since for any $t \in \bar{\mathcal{T}}$, $L_t
\leq L_g^c+L_f^c + L_{\ell}^c$.
\end{proof}

\begin{thm} \label{thm:equivalence}
Let the condition \eqref{cond:CFL3} hold true.
Then the discretized MFG system \eqref{eq:theta_mfg2} is equivalent to the system \eqref{eq:theta_mfg} with $D= M$, which is itself equivalent to a discrete MFG of the form \eqref{eq:dmfg}, satisfying Assumptions \ref{ass1} and \ref{ass3}, with $\pi_0$ and $\pi_1$ defined by \eqref{eq:Pi0} and \eqref{eq:Pi1} and with control bound $\bar{D}=M$. As a consequence, \eqref{eq:theta_mfg2} has a unique solution.
\end{thm}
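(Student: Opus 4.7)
The proof reduces to chaining together three equivalences and invoking the existence/uniqueness results already proven for the abstract system \eqref{eq:dmfg}. First, by construction, \eqref{eq:theta_mfg2} is just \eqref{eq:theta_mfg} with $D=\infty$: multiplying equations (i) and (iii) of \eqref{eq:theta_mfg2} by $\Delta t$ reproduces the system \eqref{eq:theta_mfg} with $D=\infty$, as already noted after its formulation.

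For the equivalence between \eqref{eq:theta_mfg} with $D=\infty$ and \eqref{eq:theta_mfg} with $D=M$, I would combine the a priori bounds of Lemma \ref{lm:lip} with Corollary \ref{coro:equi_H}. Lemma \ref{lm:lip} applies for any $D\in(0,\infty]$ and shows that any solution satisfies $\|v\|_{\infty,\infty}\leq M$ and that $u(t+1/2,\cdot)$ is $(L_\ell^c+L_f^c+L_g^c)$-Lipschitz, hence $\|p_1(t,x)\| = \|\nabla_h u(\cdot+1/2,\cdot)(t,x)\|\leq \sqrt{d}(L_\ell^c+L_f^c+L_g^c)$. Since $\ell^c(t,x,\cdot)$ is differentiable by Assumption \ref{ass:continuous}, $\partial_v\ell(t,x,0)=\{\ell_v^c(t\Delta t,x,0)\}$ and $\|p_0\|\leq \max_{(t,x)\in Q}\|\ell_v^c(t,x,0)\|$. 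Plugging these bounds into the definition \eqref{eq:cons_M} of $M$ gives $\frac{1}{\alpha^c}(2\|p_0\|+\|p_1\|)\leq M$, so Corollary \ref{coro:equi_H} yields $H^\infty[p_1]=H^M[p_1]$ and $H_p^\infty[p_1]=H_p^M[p_1]$ at every time–space point arising in a solution. Consequently any solution of either system is a solution of the other.

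The second equivalence, between \eqref{eq:theta_mfg} with $D=M$ and a system of the form \eqref{eq:dmfg} with the prescribed $\pi_0$, $\pi_1$ and $\bar{D}=M$, is exactly Lemma \ref{lem:equi1}. It then remains to verify that the hypotheses of the general theory hold: Assumption \ref{ass1} is provided by Lemma \ref{lm:ass}, and for Assumption \ref{ass3} I would apply Lemma \ref{lem:trivial}, reducing its verification to checking that $\pi[v]\in\Pi(\mathcal{T},S)$ for every $v$ with $\|v\|_{\infty,\infty}\leq M$. This is precisely the content of Remark \ref{rem:fp}: under \eqref{cond:CFL3} the explicit step \eqref{eq:fp_exp} has nonnegative coefficients and is mass-preserving, while the subsequent implicit step preserves probability measures by the monotonicity statement of Lemma \ref{lm:implicit}.

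Once the three equivalences are in place, existence of a solution follows from Theorem \ref{thm1} applied to \eqref{eq:dmfg}, and uniqueness from Lemma \ref{lemma:unique}; transferring back gives the unique solvability of \eqref{eq:theta_mfg2}. I expect the main technical obstacle to be the first equivalence: one must track the control-bound parameter $D$ carefully, checking that the a priori Lipschitz estimate for $u$ together with the uniform bound on $\|\ell_v^c(\cdot,\cdot,0)\|$ combines, via \eqref{eq:cons_M}, to exactly meet the threshold required by Corollary \ref{coro:equi_H}, so that the artificial truncation at $D=M$ never becomes active on an actual solution. The remaining steps are essentially a bookkeeping assembly of lemmas already proven in the paper.
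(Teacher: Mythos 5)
Your proposal is correct and follows essentially the same route as the paper: it chains the same three equivalences (construction, Lemma \ref{lm:lip} plus Corollary \ref{coro:equi_H} to remove the truncation at $D=M$, and Lemma \ref{lem:equi1}), verifies Assumptions \ref{ass1} and \ref{ass3} via Lemma \ref{lm:ass}, Lemma \ref{lem:trivial} and Remark \ref{rem:fp}, and concludes with Theorem \ref{thm1} and Lemma \ref{lemma:unique}. The only difference is that you spell out the constant-tracking behind the step the paper dismisses as a ``direct consequence'' of Corollary \ref{coro:equi_H} and Lemma \ref{lm:lip}, which is a welcome but not substantively different elaboration.
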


\begin{proof}
By construction, \eqref{eq:theta2} is equivalent to \eqref{eq:theta_mfg} with $D= \infty$. As a direct consequence of Corollary \ref{coro:equi_H} and Lemma \ref{lm:lip}, the system \eqref{eq:theta_mfg} with $D=\infty$ is equivalent to \eqref{eq:theta_mfg} with $D=M$. We already know that \eqref{eq:theta_mfg} is equivalent to \eqref{eq:dmfg}, by Lemma \ref{lem:equi1} and that Assumption \ref{ass1} is satisfied, by Lemma \ref{lm:ass}.
It remains to verify that \eqref{eq:theta_mfg} satisfy Assumption \ref{ass3}, for $\bar{D}= M$. To do this, we need to verify that for any $v \in \R^d(\mathcal{T} \times S) $ with $\| v \|_{\infty,\infty} \leq D$,  $\pi[v]$ is a transition process, by Lemma \ref{lem:trivial}. By Remark \ref{rem:charac_process}, this is equivalent to show that
for any $t \in \mathcal{T}$, for any $m(t) \in \mathcal{P}(S)$, for any $v \in \R^d(\mathcal{T}\times S)$ such that $\| v \|_{\infty,\infty} \leq M$, we have $m(t+1) \in \mathcal{P}(S)$, where $m(t+1)$ is defined by equation \eqref{eq:theta_mfg}-(iii). We conclude that $m(t+1)\in \mathcal{P}(S)$ from Remark \ref{rem:fp}.
\end{proof}

\subsection{Energy estimate for the discrete FP equation}

We investigate here the $\ell^2$-stability of the discrete Fokker-Planck equation of the theta-scheme. To this aim we consider the following perturbed equation:
  \begin{equation}\label{eq:theta2}
       \begin{cases}
       \begin{array}{rl}
       \left(\text{Id} - \theta \sigma\Delta t \Delta_h\right) \mu(t+1)  = & \big( \text{Id}+ (1-\theta )\sigma \Delta t \Delta_h  \big) \mu(t) - \Delta t \text{div}_h \big(v (t)\mu(t) \big)  \\
       &  - \Delta t \text{div}_h \big(\delta_v (t) \big) + \Delta t \delta(t), \\[0.5em]
      \mu(0)   = & \mu_0 ,
      \end{array}
   \end{cases} \\
   \end{equation}
where $ \delta_v\in \mathbb{R}^d(S), \delta \in \mathbb{R}(S)$. 
Note that we have no sign condition on $\mu$. 
The first error term $ - \Delta t \text{div}_h \big(\delta_v (t) \big) $ represents a perturbation in the form of a discrete divergence and $\Delta t \delta (t)$ is another general perturbation term.

\begin{prop}[Energy inequality]\label{prop:energy} Let $\theta > 1/2$ and $\mu$ be a solution of \eqref{eq:theta2}. 
Let $v \in \R^d(\mathcal{T} \times S)$ be such that $\| v \|_{\infty,\infty} \leq M$. Then, there exists some constant $c$ independent of $h$ and $\Delta t$ such that
\begin{equation}\label{eq:energy}
           \max_{t\in\tilde{\mathcal{T}}} \big\|\mu(t)\big\|_2^2  \leq c \left(  \big\|\mu_0 \big\|_2^2 +  (1-\theta) \sigma  \big\|\nabla^{+}_h \mu_0 \big\|_2^2 + \sum_{\tau\in\mathcal{T}} \Delta t \left(   \big\| \delta_v(\tau)\big\|^2_2 + \big\|\delta(\tau)\big\|_{2}^2 \right) \right).
\end{equation}
\end{prop}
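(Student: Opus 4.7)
The plan is to derive a one-step discrete energy inequality for the augmented quantity
\begin{equation*}
E(t) := \|\mu(t)\|_2^2 + (1-\theta)\sigma\Delta t\|\nabla_h^+\mu(t)\|_2^2,
\end{equation*}
and then to close the argument by a discrete Gronwall lemma. First I take the $\ell^2$-inner product of \eqref{eq:theta2} with $\mu(t+1)$, use the identity $\langle\mu(t+1)-\mu(t),\mu(t+1)\rangle = \tfrac{1}{2}(\|\mu(t+1)\|_2^2 - \|\mu(t)\|_2^2) + \tfrac{1}{2}\|\mu(t+1)-\mu(t)\|_2^2$, and apply the integration-by-parts formulas of Lemma~\ref{lm:int_by_part} to the Laplacian and divergence terms. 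This produces the exact energy identity
\begin{align*}
&\tfrac{1}{2}\bigl(\|\mu(t+1)\|_2^2 - \|\mu(t)\|_2^2\bigr) + \tfrac{1}{2}\|\mu(t+1)-\mu(t)\|_2^2 + \theta\sigma\Delta t\|\nabla_h^+\mu(t+1)\|_2^2 + (1-\theta)\sigma\Delta t\langle\nabla_h^+\mu(t),\nabla_h^+\mu(t+1)\rangle \\
&\qquad = \Delta t\langle v(t)\mu(t)+\delta_v(t), \nabla_h\mu(t+1)\rangle + \Delta t\langle\delta(t), \mu(t+1)\rangle.
\end{align*}

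The core algebraic step is to apply Young's inequality to the cross gradient term and rewrite the second-order part of the left-hand side as
\begin{equation*}
\theta\sigma\Delta t\|\nabla_h^+\mu(t+1)\|_2^2 + (1-\theta)\sigma\Delta t\langle\nabla_h^+\mu(t),\nabla_h^+\mu(t+1)\rangle \geq \tfrac{1-\theta}{2}\sigma\Delta t\bigl(\|\nabla_h^+\mu(t+1)\|_2^2 - \|\nabla_h^+\mu(t)\|_2^2\bigr) + (2\theta-1)\sigma\Delta t\|\nabla_h^+\mu(t+1)\|_2^2.
\end{equation*}
The first summand, combined with $\tfrac{1}{2}(\|\mu(t+1)\|_2^2-\|\mu(t)\|_2^2)$, produces exactly $\tfrac{1}{2}(E(t+1)-E(t))$; the second is a strictly positive reserve that exists precisely because $\theta > 1/2$, and it will absorb the gradient contribution from the right-hand side.

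To bound the right-hand side, I apply Young's inequality a second time with a weight chosen so that the $\|\nabla_h^+\mu(t+1)\|_2^2$-coefficient produced equals $(2\theta-1)\sigma\Delta t$; together with Lemma~\ref{lem:easy_ineq} (which gives $\|\nabla_h\mu(t+1)\|_2\leq\|\nabla_h^+\mu(t+1)\|_2$) and the hypothesis $\|v\|_{\infty,\infty}\leq M$, this yields
\begin{equation*}
\Delta t\langle v(t)\mu(t)+\delta_v(t), \nabla_h\mu(t+1)\rangle \leq (2\theta-1)\sigma\Delta t\|\nabla_h^+\mu(t+1)\|_2^2 + \tfrac{C\Delta t}{(2\theta-1)\sigma}\bigl(\|\mu(t)\|_2^2 + \|\delta_v(t)\|_2^2\bigr),
\end{equation*}
whose gradient part cancels exactly against the reserve on the left. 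The source contribution is controlled by $\tfrac{\Delta t}{2}(\|\delta(t)\|_2^2 + \|\mu(t+1)\|_2^2)$. Using the trivial bounds $\|\mu(t)\|_2^2\leq E(t)$ and $\|\mu(t+1)\|_2^2\leq E(t+1)$ and rearranging, I arrive at $(1-\Delta t)E(t+1) \leq (1+c_1\Delta t)E(t) + c_2\Delta t(\|\delta_v(t)\|_2^2 + \|\delta(t)\|_2^2)$, which for $\Delta t\leq 1/2$ (ensured by $T\geq 2$) takes the form $E(t+1) \leq (1 + c_3\Delta t)E(t) + c_4\Delta t(\|\delta_v(t)\|_2^2 + \|\delta(t)\|_2^2)$. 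The discrete Gronwall lemma and $T\Delta t=1$ then give $\max_t E(t)\leq c\bigl(E(0) + \sum_\tau\Delta t(\|\delta_v(\tau)\|_2^2 + \|\delta(\tau)\|_2^2)\bigr)$, and since $\|\mu(t)\|_2^2\leq E(t)$ and $E(0)\leq \|\mu_0\|_2^2 + (1-\theta)\sigma\|\nabla_h^+\mu_0\|_2^2$ (using $\Delta t\leq 1$), the bound \eqref{eq:energy} follows.

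The most delicate point is the Young splitting in the second paragraph: the coefficient $(1-\theta)\sigma\Delta t$ appearing in $E$ is dictated by the requirement that the \emph{telescoping plus reserve} decomposition of the cross term leaves a positive reserve, and this reserve must in turn be large enough to absorb the $\nabla_h\mu(t+1)$-contribution from the divergence term. Both constraints use the strict inequality $\theta > 1/2$ in an essential way, as anticipated in Remark~\ref{rem:theta}.
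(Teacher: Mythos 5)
Your proposal is correct and follows essentially the same route as the paper: inner product with $\mu(t+1)$, discrete integration by parts, Young's inequality with an equal split on the cross gradient term and with weight $\sigma(2\theta-1)$ on the divergence terms (so that the resulting $\|\nabla_h^+\mu(t+1)\|_2^2$ contribution is absorbed by the reserve created by $\theta>1/2$), followed by telescoping of the augmented energy $\|\mu(t)\|_2^2+(1-\theta)\sigma\Delta t\|\nabla_h^+\mu(t)\|_2^2$ and the discrete Gronwall lemma. The only differences are cosmetic (you retain the $\tfrac12\|\mu(t+1)-\mu(t)\|_2^2$ term explicitly and phrase the conclusion as a one-step recursion for $E(t)$ rather than summing first), and all the constants and exponents check out.
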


\begin{proof}
Recall that the forward discrete gradient was defined in \eqref{eq:forward_grad}.
 Computing the scalar product with $\mu(t+1)$ of both sides of \eqref{eq:theta2} and applying the integration by parts formulas \eqref{eq:int_by_part1}-\eqref{eq:int_by_part2}, we obtain that
\begin{equation}\label{eq:dmu2}
\begin{split}
        \big\langle \mu (t+1) - \mu (t), \mu(t+1) \big\rangle + \theta \sigma \Delta t \beta_1
       = & (1-\theta) \sigma \Delta t \beta_2  + \Delta t\big( \gamma_1 + \gamma_2 + \gamma_3 \big),
\end{split}
\end{equation}
where 
\begin{align*}
       \beta_1 & {} = - \big\langle \mu(t+1), \Delta_h \mu(t+1) \big\rangle  =  \big\|\nabla^{+}_h \mu(t+1) \big\|_2^2 , \\[0.5em]
       \beta_2 & {} = \big\langle \mu(t+1), \Delta_h \mu(t) \big\rangle   = - \big\langle \nabla^{+}_h \mu(t+1) , \nabla^{+}_h \mu(t)\big\rangle, \\[0.5em]
       \gamma_1 & {}= - \big\langle \text{div}_h \big(\mu(t)v (t) \big), \mu(t+1)\big\rangle  = \sum_{x\in S}  \big\langle  \nabla_h \mu(t+1,x)  , \mu v(t,x)\big\rangle ,\\
       \gamma_2 & {}= - \big\langle \text{div}_h \big(\delta_v (t) \big), \mu(t+1)\big\rangle  = \sum_{x\in S}  \big\langle  \nabla_h \mu(t+1,x)  ,  \delta_v(t,x)\big\rangle, \\
       \gamma_3 & {} =   \big\langle \delta(t), \mu(t+1) \big\rangle.
\end{align*}
Using Young's inequality, it is easy to prove that
\begin{equation*}
       \Big\langle \mu (t+1) - \mu (t), \mu(t+1)\Big\rangle \geq  \frac{1}{2}\Big(\|\mu(t+1)\|_2^2 - \|\mu(t)\|_2^2 \Big).
\end{equation*}
Combining the above inequality with \eqref{eq:dmu2}, we obtain that
 \begin{equation}\label{eq:dmu3}
       \begin{split}
           &\frac{1}{2}\Big(\|\mu(t+1)\|_2^2 - \|\mu(t)\|_2^2 \Big)  + \theta \sigma \Delta t \big\|\nabla^{+}_h \mu(t+1) \big\|_2^2 \\[0.5em]
           & \qquad \quad \leq  - (1-\theta) \sigma \Delta t \, \big\langle \nabla^{+}_h \mu(t+1) , \nabla^{+}_h \mu(t)\big\rangle  + \Delta t \big( \gamma_1 + \gamma_2 +\gamma_3 \big).
       \end{split}
   \end{equation}
   Applying Young's inequality to the right-hand side of \eqref{eq:dmu3} and using inequality \eqref{eq:dmu+}, we obtain that for all positive numbers $\alpha_0, \alpha_1,\alpha_2$ and $\alpha_3$, we have
\begin{align*}
      - \big\langle \nabla^{+}_h \mu(t+1) , \nabla^{+}_h \mu(t)\big\rangle & \leq \frac{\alpha_0}{2} \big\|\nabla^{+}_h \mu(t+1) \big\|_2^2 + \frac{1}{2\alpha_0} \big\|\nabla^{+}_h \mu(t) \big\|_2^2 ;\\
      \gamma_1  & \leq \frac{\alpha_1}{2} \big\|\nabla_h ^{+}{\mu }(t+1) \big\|^2_2 +  \frac{M^2}{2\alpha_1}  \big\|\mu (t) \big\|^2_2;\\
       \gamma_2
       & \leq \frac{\alpha_2}{2}\big\|\nabla^{+}_h{\mu}(t+1)\big\|^2_2  +  \frac{1}{2\alpha_2}  \big\|\delta_v(t)\big\|^2_2;\\
      \gamma_3 & \leq \frac{\alpha_3}{2} \big\|\delta(t) \big\|_{2}^2 + \frac{1}{2\alpha_3} \big\|\mu(t+1) \big\|^2_2.
\end{align*}
Taking $\alpha_0 = 1$, $\alpha_1 = \alpha_2 = \sigma(2\theta -1) > 0$, and $\alpha_3 = 1$, we have
\begin{equation*}
       \begin{split}
           & (1-\Delta t) \big\| \mu(t+1) \big\|_2^2 - (1-\Delta t) \big\| \mu(t) \big\|_2^2 + (1-\theta) \sigma \Delta t \Big( \big\|\nabla^{+}_h \mu(t+1) \big\|_2^2 - \big\|\nabla^{+}_h \mu(t) \big\|_2^2   \Big) \\
       & \qquad \leq \Delta t \Big( c_1 \big\|\mu (t) \big\|^2_2 + c_2 \big\| \delta_v(t) \big\|^2_2 + \big\|\delta(t) \big\|_{2}^2 \Big),
\end{split}
\end{equation*}
where $c_1 = 1 + \frac{M^2}{\sigma(2\theta - 1)} $ and $c_2 = \frac{1}{\sigma(2\theta - 1)}  $.
Summing the above equation over $t$, it follows that
    \begin{equation*}
          (1-\Delta t) \big\|\mu(t+1)\big\|_2^2 
       \leq   \Delta t \sum_{\tau=0}^t\Big( c_1 \big\|\mu (\tau) \big\|^2_2 + c_2\big\|\delta_v(\tau)\big\|^2_2 + \big\|\delta(\tau) \big\|_{2}^2 \Big) + c_3,
   \end{equation*}
   where $c_3 = (1-\Delta t) \|\mu_0\|_2^2 +  (1-\theta) \sigma \Delta t \|\nabla^{+}_h \mu_0 \|_2^2$. Since $1-\Delta t \geq 1/2$, 
   by the discrete Gronwall inequality \cite{clark1987}, there exists some constant $c$ independent of $(\Delta t, h)$ such that
   \begin{align*}
       \max_{t\in\tilde{\mathcal{T}}} \big\|\mu(t)\big\|_2^2 & \leq c\left(  \big\|\mu_0 \big\|_2^2 +  (1-\theta) \sigma  \big\|\nabla^{+}_h \mu_0 \big\|_2^2 + \sum_{\tau\in\mathcal{T}} \Delta t \left(   \big\| \delta_v(\tau)\big\|^2_2 + \big\|\delta(\tau)\big\|_{2}^2 \right) \right).
   \end{align*}
The proposition is proved.
\end{proof}

\begin{rem}
 By taking $ \alpha_1 = \alpha_2 < \sigma(2\theta -1)$ in the proof, we can get a refined energy estimate with an additional term $ \sum_{t\in \tilde{\mathcal{T}}} \Delta t \big\|\nabla^{+}_h \mu(t) \big\|_2^2 $ on the left-hand side of \eqref{eq:energy}. This refined energy estimate is consistent with the continuous case \cite[Thm.\@ 2.1]{ladyvzenskaja1988linear}.
\end{rem}

\section{Consistency analysis of the theta-scheme} 

This section is dedicated to the consistency analysis of the theta-scheme and to the proof of Theorem \ref{thm:main}. To alleviate the proofs, we will make use of the big $\mathcal{O}$ notation: Given $f_1, f_2 \in \mathbb{R}^n(\mathcal{T}\times S)$ and $ \gamma > 0$, the notation $f_1  - f_2  = \mathcal{O}(h^{\gamma})$ (or $f_1 = f_2 + \mathcal{O}(h^{\gamma})$) means that there exists some constant $C$ independent of $h$ and $\Delta t$ such that
$ \|f_1 - f_2\|_{\infty,\infty}  \leq C h^{\gamma}$.
In particular, $f_1 = \mathcal{O}(h^{\gamma})$ means that $\|f_1\|_{\infty,\infty} \leq Ch^{\gamma}$.

All along the section, \eqref{cond:CFL3} is supposed to be satisfied. Therefore, we have $\Delta t = \mathcal{O}(h^2)$.

\subsection{Consistency error}

Let us recall that $(u^{*},v^{*},m^{*})$ is the unique solution to the continuous system \eqref{eq:mfg}. 
{The restriction of $(u^{*},m^{*})$ on the grid, denoted by $(u^{*}_h,v^{*}_h)$, is defined as in Theorem \ref{thm:main}.  Making use of the convention \eqref{eq:convention}, we define $v^{*}_h \in \R^d(\mathcal{T} \times S)$ by
\begin{equation}\label{eq:grid3}
    v^{*}_h(t,x) = -H_p^M[\nabla_h u^{*}_h(\cdot+1/2,\cdot)](t,x).
\end{equation} 
Then, $(u^{*}_h,v^{*}_h,m^{*}_h)$ can be considered as a solution of the perturbed discrete mean field game \eqref{eq:dmfgd} with perturbation terms $\eta$ and $\delta$ specified later in Lemma \ref{lm:consist+}.
}

\begin{lem} \label{lem:reg_sol}
For any $t \in \bar{\mathcal{T}}$, $u^{*}_h(t,\cdot)$ is $(L_{\ell}^c+L_{f}^c + L_g^c)$-Lipschitz continuous. For any $t \in \mathcal{T}$, $u^{*}_h(t+1/2,.)$ is also $(L_{\ell}^c+L_{f}^c + L_g^c)$-Lipschitz continuous. Moreover, $ \| v^{*}_h \|_{\infty} \leq M$ and
\begin{equation*}
\begin{split}
H^M[\nabla_h u^{*}_h(\cdot+1/2,\cdot)](t,x)
= {} &
H^c[\nabla_h u^{*}_h(\cdot+1/2,\cdot)](t,x) \\
H_p^M[\nabla_h u^{*}_h(\cdot+1/2,\cdot)](t,x)
= {} &
H_p^c[\nabla_h u^{*}_h(\cdot+1/2,\cdot)](t,x).
\end{split}
\end{equation*}
\end{lem}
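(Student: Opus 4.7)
The plan is to establish the five assertions in sequence, relying on earlier results; the overall structure parallels the discrete Lemma \ref{lm:lip}. For the Lipschitz continuity of $u^*(t,\cdot)$, I would use the stochastic-control representation
\begin{equation*}
u^*(t,x) = \inf_{v}\ \mathbb{E}\left[\int_t^1 \big(\ell^c(s,X_s,v_s) + f^c(s,X_s,m^*(s))\big) ds + g^c(X_1)\right],
\end{equation*}
with $X_t = x$ and $dX_s = v_s ds + \sqrt{2\sigma}\, dW_s$, which is justified by a verification argument since $u^* \in \mathcal{C}^{1+r/2,2+r}(Q)$ is a classical solution of the HJB equation in \eqref{eq:mfg}. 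Coupling trajectories from $x$ and $y$ driven by the same open-loop control and the same Brownian motion yields $X^y_s - X^x_s = y - x$, and the Lipschitz assumptions on $\ell^c$, $f^c$, $g^c$ in Assumption \ref{ass:continuous} then give $|u^*(t,y) - u^*(t,x)| \leq \big((1-t)(L_\ell^c + L_f^c) + L_g^c\big)\|x-y\| \leq (L_\ell^c + L_f^c + L_g^c)\|x-y\|$. Restricting to the grid transfers the bound to $u^*_h(t,\cdot)$.

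For $u^*_h(t+1/2,\cdot)$, the convention \eqref{eq:convention} identifies it as $B_1^{-1} u^*_h(t+1,\cdot)$, and point (iii) of Lemma \ref{lm:implicit} states that $B_1^{-1}$ preserves the Lipschitz constant, which transfers the bound from the previous step. For the assertion $\|v^*_h\|_\infty \leq M$, I would use that the centered discrete gradient of any $L$-Lipschitz function is bounded in norm by $\sqrt{d}\, L$, so $\|\nabla_h u^*_h(t+1/2,x)\| \leq \sqrt{d}(L_\ell^c + L_f^c + L_g^c)$. Applying inequality \eqref{eq:bound:Hp} of Lemma \ref{lem:hD_diff} with $p_0 = \ell_v^c(t,x,0)$ and $D=M$ then gives $\|v^*_h(t,x)\| \leq M$ directly from the definition \eqref{eq:cons_M} of $M$. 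The fourth and fifth assertions follow immediately from Corollary \ref{coro:equi_H} applied with $D_1 = M$ and $D_2 = \infty$, once one notes that $H^\infty = H^c$ and $H_p^\infty = H_p^c$ by the very definition \eqref{eq:Hc}.

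The only delicate step is the stochastic verification representation used in the first paragraph; it relies on the regularity of $u^*$ granted by Assumption \ref{ass:sol+} and on standard existence of strong solutions to the controlled SDE, with admissibility of the feedback $v^* = -H_p^c[\nabla u^*]$ guaranteed by $v^* \in \mathbb{L}^{\infty}([0,1];\mathcal{C}^{1+r}(\mathbb{T}^d))$. A purely PDE-based alternative would differentiate the HJB equation formally in $x$, obtaining a linear parabolic system for $\nabla u^*$ to which the maximum principle applies, but the control-theoretic route produces the exact constant $L_\ell^c + L_f^c + L_g^c$ most transparently and matches the bookkeeping already carried out for the discrete problem in Lemma \ref{lm:lip}.
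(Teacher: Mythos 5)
Your proof is correct and follows essentially the same route as the paper, whose own proof is only a two-line deferral to the argument of Lemma \ref{lm:lip}: the half-step Lipschitz bound via Lemma \ref{lm:implicit}(3), the gradient bound $\|\nabla_h u^*_h(t+1/2,\cdot)\|\leq \sqrt{d}(L_\ell^c+L_f^c+L_g^c)$, the bound $\|v^*_h\|_\infty\leq M$ via \eqref{eq:bound:Hp}, and the identification of $H^M$ with $H^c$ via Corollary \ref{coro:equi_H} are exactly the paper's steps. Your stochastic-control coupling argument for the Lipschitz continuity of $u^*(t,\cdot)$ is a legitimate and fully detailed instantiation of the ``similar ideas to Lemma \ref{lm:lip}'' that the paper invokes without elaboration, and it reproduces the same constant bookkeeping $L_g^c+(1-t)(L_f^c+L_\ell^c)$.
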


\begin{proof}
It can be proved, with similar ideas to those of the proof of Lemma \ref{lm:lip}, that $u^*(t,\cdot)$ is $(L_{\ell}^c+L_{f}^c + L_g^c)$-Lipschitz continuous, for any $t \in [0,1]$. The first claim of the lemma follows immediately. The other claims can be shown with the same arguments as those of the proof of Lemma \ref{lm:lip}.
\end{proof}

Below we state (without proof) elementary consistency estimates, all directly deduced from Assumption \ref{ass:sol+}:
\begin{equation} \label{eq:consistency}
\begin{array}{rlrl}
\frac{u^*(t+\Delta t,x)-u^*(t,x)}{\Delta t} - \frac{\partial u^*(t,x)}{\partial t} & \!\!\! = \mathcal{O}(\Delta t^{r/2}), & \frac{m^*(t+\Delta t,x)-m^*(t,x)}{\Delta t} - \frac{\partial m^*(t,x)}{\partial t} & \!\!\! = \mathcal{O}(\Delta t^{r/2}), \\[0.5em]
\nabla_h u^*(t,x)-\nabla u^*(t,x)& \!\!\! =
\mathcal{O}(h^{1+r}), \qquad &
\mathrm{div}_h (m^{*}v^{*})(t,x)
- \mathrm{div}(m^{*}v^{*})(t,x)
& \!\!\! = \mathcal{O}(h^r), \\[0.5em]
\Delta_h u^*(t,x)- \Delta u^*(t,x) & \!\!\! =
\mathcal{O}(h^r), &
\Delta_h m^*(t,x) - \Delta m^*(t,x)
& \!\!\! = \mathcal{O}(h^r).
\end{array}
\end{equation}
We also observe that the discrete differential operators commute with integrals. For example,
\begin{equation} \label{eq:commutation}
\begin{split}
\Delta_h (\mathcal{I}_h(m^{*}))(t,x)
= {} & \frac{1}{h^2} \sum_{i=1}^d \int_{B_h(x)} \Big( m^{*}(t,y+he_i) + m^{*}(t,y-he_i) - 2m^{*}(t,y) \Big) dy \\
= {} & \int_{B_h(x)} \Delta_h m^{*}(t,y) dy
= \mathcal{I}_h( \Delta_h m^{*})(t,x).
\end{split}
\end{equation}
In the following three lemmas, we investigate the consistency errors associated with the coupling cost, the Hamiltonian, and the divergence term of the Fokker-Planck equation.

\begin{lem}\label{lm:f}
 For the global cost term, there holds: {for all $(t,x)\in \mathcal{T}\times S$,}
\begin{equation}\label{eq:df}
    f\big(t,x, m^{*}_h(t)\big) - f^c\big((t+1)\Delta t,x,m^{*}((t+1)\Delta t)\big) = \mathcal{O}(h) .
\end{equation}
\end{lem}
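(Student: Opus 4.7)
The plan is to bound $f(t,x,m^*_h(t)) - f^c((t+1)\Delta t, x, m^*((t+1)\Delta t))$ by inserting two intermediate quantities via the triangle inequality. Set
\begin{align*}
A &= f(t,x,m^*_h(t)), & B &= \tfrac{1}{h^d}\int_{B_h(x)} f^c(t\Delta t, y, m^*(t\Delta t))\, dy, \\
C &= \tfrac{1}{h^d}\int_{B_h(x)} f^c((t{+}1)\Delta t, y, m^*((t{+}1)\Delta t))\, dy, & D &= f^c((t{+}1)\Delta t, x, m^*((t{+}1)\Delta t)).
\end{align*}
I would then estimate each of $|A-B|$, $|B-C|$, $|C-D|$ separately and show that each is $\mathcal{O}(h)$.

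For $|A-B|$, pull the difference inside the integral defining $f$ in \eqref{eq:grid2} and apply Lipschitz continuity of $f^c$ in its third argument with respect to the $\mathbb{L}^2$-norm (Assumption \ref{ass:continuous}). The task reduces to estimating $\|\mathcal{R}_h(m^*_h(t)) - m^*(t\Delta t, \cdot)\|_{\mathbb{L}^2(\mathbb{T}^d)}$. From the definitions \eqref{eq:operatorIh} of $\mathcal{I}_h$ and $\mathcal{R}_h$, one checks that $\mathcal{R}_h(m^*_h(t))(y)$ is exactly the average of $m^*(t\Delta t, \cdot)$ over the cell $B_h(x)$ containing $y$. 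By Assumption \ref{ass:sol+}, $\nabla m^*$ is bounded, so $m^*(t\Delta t, \cdot)$ is Lipschitz continuous with a constant independent of $t$; since the diameter of $B_h(x)$ is $\mathcal{O}(h)$, the pointwise (and hence $\mathbb{L}^2$) distance is $\mathcal{O}(h)$. This gives $|A-B| = \mathcal{O}(h)$.

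For $|C-D|$, use that both sides involve $f^c$ at the same time index and the same measure argument, and apply Lipschitz continuity of $f^c$ in $x$ combined with $\|y-x\| \leq h\sqrt{d}/2$ for $y \in B_h(x)$, yielding $|C-D| = \mathcal{O}(h)$. For $|B-C|$, the two expressions are averages over $B_h(x)$ of the same functional evaluated at different times, so the estimate reduces to bounding $|f^c(t\Delta t, y, m^*(t\Delta t)) - f^c((t+1)\Delta t, y, m^*((t+1)\Delta t))|$ uniformly in $y$. Lipschitz continuity of $f^c$ in $t$ contributes $\mathcal{O}(\Delta t)$; and since $\partial_t m^*$ is bounded (thanks to $m^* \in \mathcal{C}^{1+r/2,2+r}(Q)$ in Assumption \ref{ass:sol+}), one has $\|m^*(t\Delta t) - m^*((t+1)\Delta t)\|_{\mathbb{L}^2} = \mathcal{O}(\Delta t)$, so Lipschitz continuity in the third argument yields a further $\mathcal{O}(\Delta t)$ contribution. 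Under \eqref{cond:CFL3}, $\Delta t = \mathcal{O}(h^2)$, so $|B-C| = \mathcal{O}(h^2)$.

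Combining the three bounds gives \eqref{eq:df}. There is no serious obstacle; the only point worth singling out is the identification of $\mathcal{R}_h \circ \mathcal{I}_h$ as a local-averaging operator on $\mathbb{T}^d$, which together with the Lipschitz regularity of $m^*(t\Delta t, \cdot)$ produces the $\mathcal{O}(h)$ control on the reconstruction error—this is the step that ties in the regularity hypothesis of Assumption \ref{ass:sol+}.
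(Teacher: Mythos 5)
Your proposal is correct and follows essentially the same route as the paper's proof: a triangle-inequality decomposition through the time shift (handled by Lipschitz continuity of $f^c$ in $t$ and $m$ together with the time regularity of $m^*$, giving $\mathcal{O}(\Delta t)$) and through the spatial discretization (handled by Lipschitz continuity of $f^c$ in $x$ and the $\mathbb{L}^2$ bound on $\mathcal{R}_h\mathcal{I}_h(m^*(t\Delta t))-m^*(t\Delta t)$, each $\mathcal{O}(h)$). The only cosmetic difference is that the paper bounds the reconstruction error via Jensen's inequality while you use the pointwise cell-average/Lipschitz argument; both are valid and rest on the same regularity from Assumption \ref{ass:sol+}.
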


\begin{proof}
Since $m^{*}$ is Lipschitz continuous in time, uniformly in $x$, we have that
\begin{equation*}
\|m^{*}((t+1)\Delta t) - m^{*}(t\Delta t)\|_{\mathbb{L}^2} = \mathcal{O}(\Delta t).
\end{equation*}
Then the Lipschitz continuity of $f^c$ with respect to $t$ and $m$ implies that 
\begin{equation*}
    f^c\big((t+1)\Delta t,x,m^{*}((t+1)\Delta t)\big)- f^c\big(t\Delta t,x,m^{*}(t\Delta t)\big) = \mathcal{O}(\Delta t).
\end{equation*}
Using the definition of $f$ (provided in \eqref{eq:grid2}) and the Lipschitz continuity of $f^c$, we have
\begin{equation*}
\begin{split}
     & \big| f(t,x, m^{*}_h(t) ) - f^c(t\Delta t,x, m^{*}(t\Delta t)) \big| \\
  & \qquad   = \Big| \frac{1}{h^d}\int_{ B_h(x)} \Big( f^c (t\Delta t,y,\mathcal{R}_h \mathcal{I}_h(m^{*}(t\Delta t)) )  -   f^c(t\Delta t,x, m^{*}(t\Delta t)) \Big) dy \Big|\\
    & \qquad  \leq L_f^c \Big( \sqrt{d} h + \big\| \mathcal{R}_h \mathcal{I}_h(m^{*}(t\Delta t))  - m^{*}(t\Delta t)\big\|_{\mathbb{L}^2}\Big).
\end{split}
\end{equation*}
Then we estimate $ \big\| \mathcal{R}_h \mathcal{I}_h(m^{*}(t\Delta t))  - m^{*}(t\Delta t)\big\|_{\mathbb{L}^2}$ as follows:
\begin{equation*}
\begin{split}
   \big\| \mathcal{R}_h \mathcal{I}_h(m^{*}(t\Delta t))  - m^{*}(t\Delta t)\big\|_{\mathbb{L}^2} & = \Big(\sum_{x\in S} \int_{y\in B_h(x)} \Big| \frac{\mathcal{I}_h(m^{*}(t\Delta t))(x)}{h^d} - m^{*}(t\Delta t,y) \Big|^2 dy \Big)^{1/2}\\
    & \leq \Big(\sum_{x\in S} \int_{y\in B_h(x)} \int_{z\in B_h(x)} \frac{| m^{*}(t\Delta t,z) - m^{*}(t\Delta t,y)|^2}{h^d} dzdy \Big)^{1/2}\\
    & =\Big( \sum_{x\in S} \int_{y\in B_h(x)} \int_{z\in B_h(x)} \frac{\mathcal{O}(h^2)}{h^d} dzdy \Big)^{1/2} = \mathcal{O}(h),
\end{split}
\end{equation*}
where the second line is a consequence of Jensen's inequality. 
The lemma is proved.
\end{proof}

\begin{lem}\label{lm:H+}
It holds: {for all $(t,x)\in \mathcal{T}\times S$,}
\begin{equation}\label{eq:H_error+}
H^M\Big(t,x,\nabla_h u^{*}_h(t+1/2,x)\Big) - H^c\Big((t+1)\Delta t,x,\nabla u^{*}((t+1)\Delta t, x)\Big) =  \mathcal{O}(h^{1+r}).
\end{equation}
Moreover,
\begin{equation} \label{eq:H_error+2}
\begin{split}
& v^{*}((t+1)\Delta t,x)-v^{*}_h(t,x) \\
& \qquad  =H_p^M\Big(t,x,\nabla_h u^{*}_h(t+1/2,x)\Big) - H_p^c\Big((t+1)\Delta t,x,\nabla u^{*}((t+1)\Delta t, x)\Big) =  \mathcal{O}(h^{1+r}).
\end{split}
\end{equation}
\end{lem}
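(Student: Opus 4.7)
The argument is a standard Lipschitz-and-consistency chain, but the half-step argument of $\nabla_h u^*_h$ forces us to introduce an intermediate approximation.

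First I would apply Lemma~\ref{lem:reg_sol}, which guarantees $\|v^*_h\|_\infty \leq M$ and hence that $H^M$ and $H_p^M$ agree with $H^c$ and $H_p^c$ when evaluated at $\nabla_h u^*_h(t+1/2,x)$. It then suffices to bound the increment of $H^c$ (resp.\@ $H_p^c$) between the arguments $(t\Delta t,\nabla_h u^*_h(t+1/2,x))$ and $((t+1)\Delta t,\nabla u^*((t+1)\Delta t,x))$.

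The heart of the proof is the quantitative estimate
\begin{equation*}
\nabla_h u^*_h(t+1/2,x) - \nabla u^*((t+1)\Delta t, x) = \mathcal{O}(h^{1+r}).
\end{equation*}
I would decompose this through the intermediate quantity $\nabla_h u^*_h(t+1,x)$. The difference $\nabla_h u^*_h(t+1,x) - \nabla u^*((t+1)\Delta t, x)$ is of order $\mathcal{O}(h^{1+r})$ by the consistency estimate \eqref{eq:consistency}, using only the $\mathcal{C}^{2+r}$ spatial regularity of $u^*$. For the remaining piece $\nabla_h u^*_h(t+1/2,x) - \nabla_h u^*_h(t+1,x)$, I would apply statement (4) of Lemma~\ref{lm:implicit} to the identity $u^*_h(t+1/2,\cdot) = B_1^{-1} u^*_h(t+1,\cdot)$. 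This yields a bound of order $\Delta t\, h^{r-1}$, which becomes $\mathcal{O}(h^{r+1})$ thanks to the CFL bound $\Delta t = \mathcal{O}(h^2)$. The only nontrivial verification is the hypothesis of Lemma~\ref{lm:implicit}(4): one must check that $\Delta_h u^*_h(t+1,\cdot)$ is $r$-Hölder continuous with constant independent of $h$. I would argue this by combining the $r$-Hölder regularity of $\Delta u^*$ in $x$ (Assumption~\ref{ass:sol+}) with the pointwise bound $\Delta_h u^* - \Delta u^* = \mathcal{O}(h^r)$ from \eqref{eq:consistency}; on a grid with minimum nonzero spacing $h$, the $\mathcal{O}(h^r)$ discrepancy is absorbed into the Hölder constant.

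Once the key estimate is in hand, both assertions follow from Lemma~\ref{lm:l}. For \eqref{eq:H_error+}, the time increment $\Delta t = \mathcal{O}(h^2)$ contributes $\mathcal{O}(h^2)$ via the $L^c_\ell$-Lipschitz continuity of $H^c$ in $t$, while the $p$-increment contributes $\mathcal{O}(h^{1+r})$ via the local Lipschitz continuity of $H^c$ in $p$ (a consequence of the pointwise bound \eqref{eq:bound:Hp} applied on the bounded set where $\nabla u^*$ and $\nabla_h u^*_h(t+1/2,\cdot)$ live). The first equality in \eqref{eq:H_error+2} is immediate from the defining relations of $v^*$ and $v^*_h$, and the second bound follows in the same way, using the $1/\alpha$-Lipschitz continuity of $H_p^c$ in $p$ and its $(L^c_\ell/\alpha)$-Lipschitz continuity in $t$. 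Since $r\in(0,1)$, the $\mathcal{O}(h^2)$ contribution from the time shift is dominated by $\mathcal{O}(h^{1+r})$, which is the final rate.
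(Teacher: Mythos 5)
Your proposal is correct and follows essentially the same route as the paper's proof: identify $H^M$ with $H^c$ via Lemma~\ref{lem:reg_sol}, use the uniform Lipschitz continuity of $H^c$ (and $H_p^c$) in $(t,p)$ on the bounded set where the gradients live, and establish the key estimate $\nabla_h u^*_h(t+1/2,x)=\nabla u^*((t+1)\Delta t,x)+\mathcal{O}(\Delta t\, h^{r-1}+h^{1+r})$ by passing through $\nabla_h u^*_h(t+1,x)$, combining Lemma~\ref{lm:implicit}(4) with the consistency estimates \eqref{eq:consistency} and the CFL relation $\Delta t=\mathcal{O}(h^2)$. Your explicit verification that $\Delta_h u^*_h(t+1,\cdot)$ is $r$-H\"older with a constant independent of $h$ fills in a step the paper only asserts, and is valid.
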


\begin{proof}
By Lemma \ref{lem:reg_sol}, 
we have $\| \nabla_h u^{*}_h(t+1/2,x) \| \leq C$ and $\| \nabla u^*((t+1)\Delta, x) \| \leq C$, where $C = \sqrt{d} (L_{\ell}^c + L_f^c + L_g^c)$. Let $\Omega$ denote the closed ball of radius $\Omega$.
Since $H^c$ is uniformly Lipschitz with respect to $t$ and continuously differentiable with respect to $p$ (see Lemma \ref{lm:l}), we deduce that $H^c(\cdot,x,\cdot)$ is Lipschitz continuous on $[0,T] \times \Omega$, uniformly in $x$. Let $L_H$ denote the corresponding modulus. Then,
\begin{equation*}
\begin{split}
     & \Big |H^M \Big(t,x,\nabla_h u^{*}_h(t+1/2,x)\Big) - H^c\Big((t+1)\Delta t,x,\nabla u^{*}((t+1)\Delta t, x)\Big)\Big| \\
     & \qquad \leq L_H \| \nabla_h u^{*}_h(t+1/2,x) - \nabla u^{*}((t+1)\Delta t, x) \| + \mathcal{O}( \Delta t).
\end{split}
\end{equation*}
It is easy to deduce from the regularity of $u^*$ (Assumption \ref{ass:sol+}) that $\Delta_h u^{*}_h(t+1,\cdot)$ is H\"olderian with exponent $r$. Then, using the fourth statement of Lemma \ref{lm:implicit} and the consistency estimate \eqref{eq:consistency}, we obtain that
\begin{equation*}
    \nabla_h u^{*}_h(t+1/2,x) = \nabla_h u^{*}_h(t+1,x) + \mathcal{O}(\Delta t h^{r-1}) = \nabla u^{*}((t+1)\Delta t,x) + \mathcal{O}(\Delta t h^{r-1} + h^{1+r}).
\end{equation*}
The estimate \eqref{eq:H_error+} follows and estimate \eqref{eq:H_error+2} can be proved similarly.
\end{proof}

\begin{lem}\label{lm:v+}
For the divergence term, there holds: {for all $(t,x)\in \mathcal{T}\times S$,}
\begin{align}
     \textnormal{div}_h (v^{*}_h m^{*}_h (t,x)) - \int_{B_h(x)}\textnormal{div} (v^{*} m^{*})((t+1)\Delta t,y) dy = \mathcal{O} (h^{r+d}) +\textnormal{div}_h(\epsilon_1) ; \label{eq:v_error+}\\
     \textnormal{div}_h (v^{*}_h m^{*}_h (t,x)) - \int_{B_h(x)}\textnormal{div} (v^{*} m^{*}) (t\Delta t,y)dy = \mathcal{O} (h^{r+d}) +\textnormal{div}_h( \epsilon_2), \label{eq:v_error_2+}
\end{align}
where $\epsilon_1 = \mathcal{O}(h^{1+r+d})$, and $\epsilon_2 =  \mathcal{O}(h^{2r+d})$.
\end{lem}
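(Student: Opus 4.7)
The plan is to reduce both estimates to a pointwise comparison between a shifted centered difference operator and the continuous divergence, via the key identity
\begin{equation*}
\textrm{div}_h(\mathcal{I}_h F)(x) = \int_{B_h(x)} \sum_{i=1}^d \frac{F_i(y+he_i) - F_i(y-he_i)}{2h}\, dy,
\end{equation*}
which follows from the change of variables $B_h(x\pm he_i) = B_h(x) \pm he_i$ inside the definition of $\mathcal{I}_h(F_i)(x \pm he_i)$.

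The first step is to write $v_h^*(t,x)\, m_h^*(t,x) = \mathcal{I}_h(F)(x) + \xi(t,x)$, where $F(y) := v^*((t+1)\Delta t, y)\, m^*(t\Delta t, y)$ and
\begin{equation*}
\xi(t,x) = \int_{B_h(x)} \bigl(v_h^*(t,x) - v^*((t+1)\Delta t, y)\bigr)\, m^*(t\Delta t, y)\, dy.
\end{equation*}
I would split the bracket into $[v_h^*(t,x) - v^*((t+1)\Delta t, x)]$, which is $\mathcal{O}(h^{1+r})$ by Lemma \ref{lm:H+} and contributes $\mathcal{O}(h^{1+r+d})$ after integration, and $[v^*((t+1)\Delta t, x) - v^*((t+1)\Delta t, y)]$, which I Taylor-expand using $v^* \in C^{1+r}(\mathbb{T}^d)$; the leading linear-in-$(y-x)$ term integrates to zero by the symmetry of $B_h(x)$ around $x$, leaving a remainder of order $|y-x|^{1+r}$. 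Hence $\xi = \mathcal{O}(h^{1+r+d})$, which will form the $\textrm{div}_h(\epsilon_1)$ term.

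Applying $\textrm{div}_h$ and the commutation identity then gives $\textrm{div}_h(\mathcal{I}_h F)(x) = \int_{B_h(x)} D_h F(y)\, dy$, where $D_h$ is the shifted centered difference operator. Since $F$ is $C^{1+r}$ in space (product of the $C^{1+r}$ function $v^*$ and the $C^{2+r}$ function $m^*$), a symmetric-difference consistency estimate yields $D_h F(y) = \textrm{div}(F)(y) + \mathcal{O}(h^r)$ pointwise, and integrating over $B_h(x)$ produces an $\mathcal{O}(h^{r+d})$ error. To conclude, I account for the time mismatches. For \eqref{eq:v_error+}, I would replace $m^*(t\Delta t,\cdot)$ by $m^*((t+1)\Delta t,\cdot)$ inside the divergence; the divergence theorem converts the defect into a flux across $\partial B_h(x)$ of total area $\mathcal{O}(h^{d-1})$ with integrand of size $\mathcal{O}(\Delta t) = \mathcal{O}(h^2)$, yielding $\mathcal{O}(h^{d+1}) \subset \mathcal{O}(h^{r+d})$ since $r<1$. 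For \eqref{eq:v_error_2+}, I would replace $v^*((t+1)\Delta t,\cdot)$ by $v^*(t\Delta t,\cdot)$; setting $\zeta(y) := (v^*((t+1)\Delta t,y) - v^*(t\Delta t,y))\, m^*(t\Delta t,y) = \mathcal{O}(\Delta t^r) = \mathcal{O}(h^{2r})$ from the $C^r$-in-time regularity of $v^*$, this correction is exactly $\textrm{div}_h(\mathcal{I}_h \zeta)$ with $\mathcal{I}_h \zeta = \mathcal{O}(h^{2r+d})$, which together with $\epsilon_1$ (absorbed since $h^{1+r+d} \leq h^{2r+d}$ for $r<1$) provides the required $\epsilon_2 = \mathcal{O}(h^{2r+d})$.

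The main obstacle is the estimate $\xi = \mathcal{O}(h^{1+r+d})$: a direct Lipschitz bound on $v^*((t+1)\Delta t,\cdot)$ only yields $\mathcal{O}(h^{d+1})$, which is weaker than required, so it is essential to exploit the antisymmetry of $y-x$ over the symmetric cell $B_h(x)$ together with the $C^{1+r}$ spatial regularity of $v^*$.
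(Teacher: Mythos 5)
Your proposal is correct and arrives at the same estimates as the paper, but it organizes the error differently. The paper decomposes $v^*_h m^*_h = \gamma_1 + \gamma_2 + \gamma_3$ with $\gamma_3(t,x) = v^*((t+1)\Delta t, x)\, m^*_h(t+1,x)$, i.e., it keeps the velocity frozen at the grid node $x$ inside the term whose discrete divergence is analyzed; the discrepancy between $v^*$ at $x$ and at $x+y$ is then absorbed into the $\mathcal{O}(h^{r+d})$ consistency error of the divergence (where it only costs $h^r$ pointwise, via the H\"older continuity of $v^*$ and $D_x v^*$), and the time shift of $m^*$ is handled by the bulk term $\gamma_2 = \mathcal{O}(\Delta t\, h^{d})$. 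You instead compare $v^*_h m^*_h$ with $\mathcal{I}_h$ of the full pointwise product $F(y) = v^*((t+1)\Delta t,y)\, m^*(t\Delta t, y)$, which forces the discrepancy $\xi$ to sit \emph{inside} the discrete divergence and therefore to be one order smaller, $\mathcal{O}(h^{1+r+d})$; you correctly identify that this is the delicate point of your route and that it requires the symmetry of $B_h(x)$ together with the $C^{1+r}$ spatial regularity of $v^*$. Your treatment of the time shifts (divergence theorem for the $m^*$-shift, giving $\mathcal{O}(h^{d+1})$, and the $C^r$-in-time bound $\mathcal{O}(\Delta t^r)=\mathcal{O}(h^{2r})$ for the $v^*$-shift packaged as $\textnormal{div}_h(\mathcal{I}_h\zeta)$) matches the paper's Step 4 in substance. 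One small imprecision to fix: the linear term $\int_{B_h(x)} D_xv^*((t+1)\Delta t,x)(y-x)\, m^*(t\Delta t,y)\,dy$ does not vanish exactly by symmetry, because $m^*(t\Delta t,\cdot)$ is not constant on the cell; you must also freeze $m^*$ at $x$, and the resulting correction
\begin{equation*}
\int_{B_h(x)} D_xv^*((t+1)\Delta t,x)(y-x)\bigl(m^*(t\Delta t,y)-m^*(t\Delta t,x)\bigr)\,dy = \mathcal{O}(h^{2+d})
\end{equation*}
is harmless since $2+d > 1+r+d$ for $r<1$. With that caveat the argument closes.
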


\begin{proof}
In order to prove \eqref{eq:v_error+}, let us decompose $v^{*}_h m^{*}_h$ as the sum of three terms, $\gamma_1$, $\gamma_2$, and $\gamma_3$, defined by
\begin{align*}
    \gamma_1(t,x) & {} = \Big(v^{*}_h(t,x)-v^{*}((t+1)\Delta t, x)\Big)m^{*}_h(t,x); \\
    \gamma_2(t,x) & {} = v^{*}((t+1)\Delta t, x) \Big(m^{*}_h(t,x)- m^{*}_h(t+1,x) \Big); \\
    \gamma_3(t,x) & {} = v^{*}((t+1)\Delta t, x) m^{*}_h(t+1,x).
\end{align*}

\smallskip
\noindent
\textbf{Step 1:} Estimation of $\gamma_1$. Since $m^{*}_h(t,x)= \mathcal{O}(h^d)$, we directly obtain with Lemma \ref{lm:H+} that
$\gamma_1(t,x)= \mathcal{O}(h^{1+r+d})$.
    
\smallskip
\noindent
\textbf{Step 2:} Estimation of $\gamma_2$. By the definition of $m^{*}_h$, we have 
\begin{equation*}
m^{*}_h(t,x) - m^{*}_h(t+1,x) = \int_{B_h(x)} \Big( m^{*}(t\Delta t, y) - m^{*}((t+1)\Delta t, y) \Big)  dy = \mathcal{O}(\Delta t h^d).
\end{equation*}
Then $\gamma_2 = \mathcal{O}(h^{2+d})$, since $v^{*}$ is uniformly bounded.
      
\smallskip
\noindent
\textbf{Step 3:} Estimation of $\textnormal{div}_h \gamma_3$. Using the definitions of $\gamma_3$ and $m^{*}_h$, we obtain that
      \begin{equation*}
      \begin{split}
\textnormal{div}_h (\gamma_3)(t,x)
          & {} =  \int_{B_h(0)}\textnormal{div}_h \Big(v^{*}((t+1)\Delta t, \cdot) m^{*}((t+1)\Delta t, \cdot + y)\Big)(x) \, dy\\
           & {} =  \int_{B_h(0)}\textnormal{div} \Big(v^{*}((t+1)\Delta t, \cdot ) m^{*}((t+1)\Delta t, \cdot + y) \Big)(x) \, dy + \mathcal{O}(h^{r+d})\\
          & {} = \int_{B_h(0)}\textnormal{div} \Big(v^{*}((t+1)\Delta t, \cdot + y) m^{*}((t+1)\Delta t, \cdot + y) \Big)(x) \, dy + \mathcal{O}(h^{r+d})\\
           & {} =  \int_{B_h(x)}\textnormal{div} (v^{*} m^{*} ) ((t+1)\Delta t)(y) \, dy + \mathcal{O} (h^{r+d}).
      \end{split}
      \end{equation*}
The second equality follows from the fact that $(v^{*}m^{*})((t+1)\Delta t,\cdot+y)\in \mathcal{C}^{1+r}(\mathbb{T}^d)$. For the third one, we use that $v^*$ and $D_x v$ are H\"olderian with exponent $r$. Then, the estimate \eqref{eq:v_error+} holds true.
      
\smallskip
\noindent
\textbf{Step 4:} Proof of \eqref{eq:v_error_2+}. Since $v^{*}m^{*}(t \Delta t,\cdot)$ and $v^{*}m^{*}((t+1)\Delta t,\cdot)$ lie in $\mathcal{C}^{1+r}(\mathbb{T}^d)$, we first have that
\begin{align*}
    &\textnormal{div}(v ^{*}m^{*}) (t\Delta t,y) - \textnormal{div}_h (v ^{*}m^{*}) (t\Delta t,y)   =\mathcal{O}(h^r);\\
   & \textnormal{div}(v ^{*}m^{*} ((t+1)\Delta t,y)) - \textnormal{div}_h (v ^{*}m^{*}) ((t+1)\Delta t,y)  =\mathcal{O}(h^r).
\end{align*}
Since $ v^{*}m^{*}(\cdot,y)\in \mathcal{C}^{r}([0,1])$, we have
\begin{equation*}
    v^{*} m^{*} ((t+1)\Delta t,y) - v^{*} m^{*} (t\Delta t,y) = \mathcal{O}(\Delta t^{r})  = \mathcal{O}(h^{2r}).
\end{equation*}
Then we have
\begin{equation*}
    \begin{split}
      & \int_{B_h(x)} \textnormal{div}_h \Big( v^{*} m^{*} ((t+1)\Delta t,\cdot) - v^{*} m^{*} (t\Delta t,\cdot)(y) \Big) dy \\
      & \qquad  \quad = \textnormal{div}_h \Big(\int_{B_h(0)} \big( v^{*} m^{*} ((t+1)\Delta t,\cdot + y) - v^{*} m^{*} (t\Delta t, \cdot + y) \big) dy \Big)(x).
    \end{split}
\end{equation*}
The right-hand side is a discrete divergence of a term of order $\mathcal{O}(h^{2r+d})$. The estimate \eqref{eq:v_error_2+} follows.
\end{proof}

We are ready to derive a complete consistency estimate for the triplet $(u^{*}_h,v^{*}_h,m^{*}_h)$ defined at the beginning of the section.

\begin{lem}[Consistency error]\label{lm:consist+}
The triplet $(u^{*}_h,v^{*}_h,m^{*}_h)$ is a solution to the perturbed discrete mean field game \eqref{eq:dmfgd} with perturbation terms $\eta$ and $\delta$ satisfying
  \begin{equation*}
      \eta = \mathcal{O}(\Delta t h^r), \qquad \delta = \mathcal{O}(\Delta t h^{r+d}) + \Delta t\, \textnormal{div}_h(\epsilon_3), \qquad \text{where }\epsilon_3 = \mathcal{O}(h^{2r+d}).
  \end{equation*}
\end{lem}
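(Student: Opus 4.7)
The plan is to substitute $(u^*_h, v^*_h, m^*_h)$ into each equation of \eqref{eq:dmfgd}, read off the residuals as $\eta$ and $\delta$, and estimate them. The feedback relation (ii) holds exactly by the definition \eqref{eq:grid3} of $v^*_h$, together with Lemma \ref{lem:reg_sol} which ensures that $H_p^M$ coincides with $H_p^c$ at the relevant arguments. Only the HJB and FP residuals then remain to control.

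For the HJB residual, I would use the equivalence of Lemma \ref{lem:equi1} between \eqref{eq:dmfgd} and the theta-scheme representation, together with the convention \eqref{eq:convention}, to merge the two half-steps of the HJB into
\begin{equation*}
\eta(t,x) = u^*_h(t,x) - u^*_h(t+1,x) - \sigma\Delta t\,\Delta_h u^*_h(t+1/2,x) + \Delta t\,H\big[\nabla_h u^*_h(\cdot+1/2,\cdot)\big](t,x) - \Delta t\,f\big(t,x,m^*_h(t)\big).
\end{equation*}
I would then invoke the continuous HJB equation at time $t\Delta t$, multiplied by $\Delta t$, together with the time-consistency estimate of \eqref{eq:consistency}, to rewrite $u^*_h(t,x) - u^*_h(t+1,x)$ as $\sigma\Delta t\,\Delta u^*(t\Delta t,x) - \Delta t\,H^c(t\Delta t,x,\nabla u^*(t\Delta t,x)) + \Delta t\,f^c(t\Delta t,x,m^*(t\Delta t)) + \mathcal{O}(\Delta t h^r)$, where $\Delta t^{1+r/2} = \mathcal{O}(\Delta t h^r)$ by the CFL constraint \eqref{cond:CFL3}. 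The three remaining discrepancies are each of size $\mathcal{O}(\Delta t h^r)$: the Laplacian mismatch combines \eqref{eq:consistency} with the half-step estimate $\Delta_h u^*_h(t+1/2,\cdot) - \Delta_h u^*_h(t+1,\cdot) = \mathcal{O}(\Delta t h^{r-2}) = \mathcal{O}(h^r)$ from the fourth statement of Lemma \ref{lm:implicit}; the Hamiltonian mismatch is controlled by Lemma \ref{lm:H+} and the $t$-Lipschitz continuity of $H^c$; and the coupling-cost mismatch is controlled by Lemma \ref{lm:f}.

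For the FP residual, I would start from the identity $m^*_h(t+1,\cdot) = q_0(t,\cdot) + \Delta t\,q_1[v^*_h](t,\cdot) + \delta(t,\cdot)$ and apply $B_1$ to both sides (thereby returning to the theta-scheme representation) to obtain
\begin{equation*}
B_1\delta(t,x) = m^*_h(t+1,x) - m^*_h(t,x) - \sigma\Delta t\big[\theta\,\Delta_h m^*_h(t+1,x) + (1-\theta)\,\Delta_h m^*_h(t,x)\big] + \Delta t\,\textnormal{div}_h(v^*_h m^*_h)(t,x).
\end{equation*}
The commutation identity \eqref{eq:commutation} and the consistency estimates \eqref{eq:consistency} allow one to rewrite the finite-difference quantities on the right-hand side as integrals over $B_h(x)$ of the corresponding continuous derivatives of $m^*$; invoking then the continuous FP equation at time $t\Delta t$ (and using the time regularity of $\Delta m^*$ to absorb the $(\theta,1-\theta)$ convex combination of the Laplacian terms), the two Laplacian contributions cancel against the time difference, leaving $B_1\delta(t,x) = \Delta t\big[\textnormal{div}_h(v^*_h m^*_h)(t,x) - \int_{B_h(x)}\textnormal{div}(v^*m^*)(t\Delta t,y)\,dy\big] + \mathcal{O}(\Delta t h^{r+d})$. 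Lemma \ref{lm:v+}, via \eqref{eq:v_error_2+}, rewrites the bracket as $\mathcal{O}(h^{r+d}) + \textnormal{div}_h(\epsilon_2)$ with $\epsilon_2 = \mathcal{O}(h^{2r+d})$. Finally, since $B_1 = \text{Id} - \theta\sigma\Delta t\,\Delta_h$ is translation-invariant on the uniform torus grid, both $B_1$ and $B_1^{-1}$ commute with $\textnormal{div}_h$; moreover, by the monotonicity statement of Lemma \ref{lm:implicit}, $B_1^{-1}$ is bi-stochastic and hence an $\ell^\infty$-contraction. Applying $B_1^{-1}$ to the identity above then yields the announced decomposition of $\delta$, with $\epsilon_3 = B_1^{-1}\epsilon_2 = \mathcal{O}(h^{2r+d})$.

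The delicate point is the preservation of the discrete divergence structure of the FP residual through the inversion of $B_1$: this structure is indispensable for the stability argument of Theorem \ref{thm:main}, where the fundamental inequality of Proposition \ref{prop:error_fund} is applied to $\delta$ and the $\textnormal{div}_h(\epsilon_3)$ contribution is absorbed via the discrete integration-by-parts formula of Lemma \ref{lm:int_by_part} in combination with the energy estimate of Proposition \ref{prop:energy}.
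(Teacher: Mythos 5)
Your proposal is correct and follows essentially the same route as the paper: substitute $(u^*_h,v^*_h,m^*_h)$ into \eqref{eq:dmfgd}, rewrite the residuals in the theta-scheme form of the HJB and FP equations, and estimate them term by term using \eqref{eq:consistency}, the commutation property \eqref{eq:commutation}, the fourth statement of Lemma \ref{lm:implicit}, and Lemmas \ref{lm:f}, \ref{lm:H+}, \ref{lm:v+}. The only notable differences are in bookkeeping: you anchor the continuous equations at time $t\Delta t$ and absorb the shift to $(t+1)\Delta t$ through the H\"older regularity in time, whereas the paper evaluates the HJB equation at $(t+1)\Delta t$ and takes the $(\theta,1-\theta)$ convex combination of the two times for the FP equation (using both \eqref{eq:v_error+} and \eqref{eq:v_error_2+}); both choices give the same bounds. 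Your explicit handling of the factor $B_1^{-1}$ that separates the theta-scheme residual from the perturbation $\delta$ actually entering \eqref{eq:dmfgd}(iii) --- via translation invariance (commutation with $\textnormal{div}_h$) and the stochasticity of $B_1^{-1}$, hence its $\ell^\infty$-contractivity --- is a point the paper's proof passes over silently, and it is a worthwhile clarification since it is precisely what guarantees that the divergence structure of $\delta$, needed later for the integration by parts in the proof of Theorem \ref{thm:main}, survives the inversion of the implicit step.
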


\begin{proof}
\textbf{Step 1.} The perturbation term $\eta$ of the dynamic programming equation is defined by
\begin{equation*}
-\frac{u^{*}_h(t+1,x) - u^{*}_h(t,x)}{\Delta t} - \sigma \Delta_h u^{*}_h(t+1/2, x) +  H^M [\nabla_h u^{*}_h ( \cdot +1/2 , \cdot ) ](t,x) = f(t,x,m^{*}_h(t)) + \frac{\eta(t,x)}{\Delta t}.
\end{equation*}
The continuous HJB equation, satisfied by $u^*$, reads at time $(t+1) \Delta t$ as follows:
\begin{equation*}
     - \frac{\partial u^{*}((t+1)\Delta t, x)}{\partial t}  - \sigma \Delta u^{*}( (t+1) \Delta t, x) + H^c [\nabla u^{*}]((t+1)\Delta t, x) =  f^c\Big((t+1)\Delta t,x,m^{*}((t+1)\Delta t) \Big).
\end{equation*}
Then $\eta$ can be put in the form $\eta= \Delta t (r_1 + r_2 + r_3 + r_4)$, where
\begin{align*}
r_1(t,x) & {}= \frac{\partial u^{*}((t+1)\Delta t, x)}{\partial t} - \frac{u^{*}_h(t+1,x) - u^{*}_h(t,x)}{\Delta t} ;\\
    r_2(t,x) & {} = \sigma \Big( \Delta u^{*}( (t+1) \Delta t, x) - \Delta_h u^{*}_h(t+1/2, x) \Big); \\
    r_3(t,x) & {} = H^M [\nabla_h u^{*}_h ( \cdot +1/2 , \cdot ) ](t,x) - H^c [\nabla u^{*}]((t+1)\Delta t, x) ;\\
    r_4(t,x) & {} = f^c\Big((t+1)\Delta t,x,m^{*}((t+1)\Delta t) \Big) - f(t,x,m^{*}_h(t)). 
\end{align*}
By \eqref{eq:consistency}, we have $r_1 =\mathcal{O}(\Delta t^{r/2}) =\mathcal{O}(h^r)$.
 Since $u^{*}((t+1)\Delta t,\cdot ) \in \mathcal{C}^{2+r}(\mathbb{T}^d)$, it follows that $\Delta_h u^{*}_h(t+1,\cdot)$ is $r$-H\"older continuous. Using the fourth statement of Lemma \ref{lm:implicit} and \eqref{eq:consistency}, we obtain that
\begin{equation*}
    \Delta_h u^{*}_h(t+1/2, x)  = \Delta_h u^{*}_h(t+1, x) + \mathcal{O}(\Delta t h^{r-2}) = \Delta u^{*}( (t+1) \Delta t, x) + \mathcal{O}(\Delta th^{r-2}  + h^r).
\end{equation*}
Thus $r_2 = \mathcal{O}(h^r)$. Lemmas \ref{lm:f} and Lemma \ref{lm:H+} yield
$r_3 = \mathcal{O}(h^{1+r})$ and $r_4 = \mathcal{O}(h)$. It follows that $\eta(t,x) = \mathcal{O}(\Delta t h^r)$. 

\smallskip
\noindent
\textbf{Step 2.} The perturbation term of the discrete Fokker-Planck equation satisfies
\begin{equation*}
   \frac{m^{*}_h(t+1,x) - m^{*}_h(t,x)}{\Delta t} - \sigma \theta  \Delta_h m^{*}_h(t+1, x) - (1-\theta )\sigma \Delta_h m^{*}_h(t, x) + \text{div}_h(v^{*}_h m^{*}_h (t,x))  = \frac{\delta(t,x)}{\Delta t}. 
\end{equation*}
The Fokker-Planck equation, satisfied by $m^*$, writes as follows at times $t \Delta t$ and $(t+1)\Delta t$:
\begin{equation*}
\begin{split}
     & \frac{\partial m^{*}(t\Delta t, x)}{\partial t}  - \sigma \Delta m^{*}(t \Delta t, x) + \textnormal{div}(v^{*}m^{*}(t\Delta t, x)) =  0; \\
     & \frac{\partial m^{*}((t+1)\Delta t, x)}{\partial t}  - \sigma \Delta m^{*}( (t+1) \Delta t, x) + \textnormal{div}(v^{*}m^{*}((t+1)\Delta t, x))  =  0.
\end{split}
\end{equation*}
Let us integrate over $B_h(x)$ the convex combination of the last two equations:
\begin{equation*}
\begin{split}
   & (1-\theta) \int_{y\in B_h(x)}\frac{\partial m^{*}(t\Delta t , y)}{\partial t} - \sigma  \Delta m^{*}( t \Delta t, y) + \textnormal{div}(v^{*}m^{*}(t\Delta t, y)) dy \\
   &\qquad + \theta \int_{y\in B_h(x)}\frac{\partial m^{*}((t+1)\Delta t , y)}{\partial t}  - \sigma  \Delta m^{*}( (t+1) \Delta t, y)+\textnormal{div}(v^{*}m^{*}((t+1)\Delta t, y)) dy =  0.
\end{split}
\end{equation*}
Then $\delta= \Delta t (\bar{r}_1 + \bar{r}_2 +\bar{r}_3 + \tilde{r}_1 + \tilde{r}_2 + \tilde{r}_3)$, where
\begin{align*}
\bar{r}_1(t,x) & {}= \theta\Big( \frac{m^{*}_h(t+1,x) - m^{*}_h(t,x)}{\Delta t} - \int_{y\in B_h(x)}\frac{\partial m^{*}((t+1)\Delta t , y)}{\partial t}  dy \Big); \\
    \bar{r}_2(t,x) & {} = \sigma \theta \Big( \int_{y\in B_h(x)}\Delta m^{*}( (t+1) \Delta t, y) dy - \Delta_h m^{*}_h(t+1, x) \Big) ; \\
    \bar{r}_3(t,x) & {} = \theta \Big( \text{div}_h(v^{*}_h m^{*}_h (t,x)) - \int_{y\in B_h(x)}\textnormal{div}(v^{*}m^{*}((t+1)\Delta t, y)) dy \Big); \\
    \tilde{r}_1(t,x) & {} = (1-\theta)\Big( \frac{m^{*}_h(t+1,x) - m^{*}_h(t,x)}{\Delta t} - \int_{y\in B_h(x)}\frac{\partial m^{*}( t \Delta t , y)}{\partial t}  dy \Big); \\
     \tilde{r}_2(t,x) & {} = \sigma (1-\theta) \Big( \int_{y\in B_h(x)}\Delta m^{*}(t \Delta t, y) dy - \Delta_h m^{*}_h(t, x) \Big) ; \\
     \tilde{r}_3(t,x) & {}= (1-\theta) \Big( \text{div}_h(v^{*}_h m^{*}_h (t,x)) - \int_{y\in B_h(x)}\textnormal{div}(v^{*}m^{*}(t \Delta t, y)) dy \Big).
\end{align*}
Using the basic consistency estimates in \eqref{eq:consistency} and the commutation property shown in \eqref{eq:commutation}, we have $\bar{r}_1 =\mathcal{O}(\Delta t^{r/2} h^d) = \mathcal{O}(h^{r+d})$, $ \bar{r}_2 = \mathcal{O}(h^{r+d})$, $\tilde{r}_1 = \mathcal{O}(\Delta t^{r/2} h^d) = \mathcal{O}(h^{r+d})$, and $\tilde{r}_2 = \mathcal{O}( h^{r+d})$. Lemma \ref{lm:v+} shows that $\bar{r}_3 = \mathcal{O}(h^{r+d})+\theta \textnormal{div}_h \epsilon_1$ and $\tilde{r}_3 = \mathcal{O}(h^{r+d}) + (1-\theta)\textnormal{div}_h \epsilon_2$. Taking $\epsilon_3 = \theta \epsilon_1 + (1-\theta )\epsilon_2$, the conclusion follows.
\end{proof}

\subsection{Proof of Theorem \ref{thm:main}}\label{sec:proof}

All constants in the proof are independent of $\Delta t$ and $h$. The existence and uniqueness of the solution $(u_h,v_h,m_h)$ to the theta-scheme was established in Theorem \ref{thm:equivalence}.
The triplet $(u_h,v_h,m_h)$ is also the unique solution to \eqref{eq:dmfg} with control bound $M$. We proved in Lemma \ref{lm:consist+} that $(u^{*}_h,v^{*}_h,m^{*}_h)$ is a solution to \eqref{eq:dmfgd} with perturbation terms $\eta$ and $\delta$ estimated as follows:
\begin{equation*}
\eta = \mathcal{O}(\Delta t h^r), \qquad \delta = \mathcal{O}(\Delta t h^{r+d}) + \Delta t\textnormal{div}_h(\epsilon_3), \qquad \text{where }\epsilon_3 = \mathcal{O}(h^{2r+d}).
\end{equation*}

\smallskip
\noindent
\textbf{Step 1.} Using similar arguments to the ones of the proof of Theorem \ref{thm1} (see in particular estimate \eqref{eq:reg_u}), we easily obtain that
\begin{equation}\label{eq:delta_u+}
    \|u^{*}_h - u_h\|_{\infty,\infty}  \leq L_{f}^c \frac{\|m^{*}_h-m_h\|_{\infty,2} }{h^{d/2}} + \|\eta\|_{1,\infty}.
\end{equation}

\smallskip
\noindent
\textbf{Step 2.} Next we apply the fundamental inequality (Proposition \ref{prop:error_fund}) to $(u_h,v_h,m_h)$ and $(u^{*}_h,v^{*}_h,m^{*}_h)$. We obtain
\begin{equation} \label{eq:stab_hjb_eta}
      \frac{\Delta t \alpha}{2} \left\|\|v^{*}_h-v_h\|^2(m^{*}_h+m_h)\right\|_{1,1} \leq  \sum_{t\in\mathcal{T}} \sum_{x\in S} \Big( (u^{*}_h-u_h)(t+1,x) \delta(t,x) + (m_h-m^{*}_h) (t,x)\eta (t,x) \Big).
\end{equation}
Let us bound the right-hand side of the obtained inequality.
There exist two constants $C_0$ and $C_1$ such that
\begin{equation*}
    \begin{split}
       & \sum_{t\in\mathcal{T}} \sum_{x\in S} (u^{*}_h- u_h)(t+1,x) \delta(t,x) \\
      & \quad \leq  \sum_{t\in\mathcal{T}} \sum_{x\in S} \Big( \Delta t (u^{*}_h- u_h)(t+1,x) \textnormal{div}_h (\epsilon_3(t,x)) \Big) + \|u^{*}_h-u_h\|_{\infty,\infty} \|\delta -\Delta t\textnormal{div}_h (\epsilon_3) \|_{1,1}
        \\
       & \quad \leq  \sum_{t\in\mathcal{T}} \sum_{x\in S} \Big( - \Delta t \Big\langle \nabla_h (u^{*}_h-u_h) (t+1,x), \epsilon_3(t,x) \Big\rangle \Big) + C_0  \Big( L_{f}^c\frac{\|m^{*}_h-m_h\|_{\infty,2} }{h^{d/2}} + \|\eta\|_{1,\infty} \Big) h^r\\
        & \quad \leq C_1 \big(  h^{2r} + \|m^{*}_h-m_h\|_{\infty,2} h^{r-d/2} \big).
            \end{split}
\end{equation*}
The first inequality is a consequence of H\"older's inequality and the second one derives from the discrete integration by parts formula combined with inequality \eqref{eq:delta_u+}.
The third one follows from the Lipschitz continuity of $u$ and $u_h$.
By H\"older's inequality, there also exists a constant $C_2$ such that
\begin{equation*}
     \sum_{t\in\mathcal{T}} \sum_{x\in S} (m_h - m^{*}_h)(t,x) \eta(t,x) \leq   \|m^{*}_h - m_h\|_{\infty,2} \|\eta\|_{1,2} \leq C_2  \|m^{*}_h - m_h\|_{\infty,2} h^{r-d/2}.
\end{equation*}
Then, there exists a constant $C_3$ such that
\begin{equation}\label{eq:epsilon+}
    \epsilon \leq C_3 \big( h^{2r} + \|m^{*}_h-m_h\|_{\infty,2} h^{r-d/2} \big),
    \quad
    \text{where: } \epsilon =  \Delta t \left\|\|v^{*}_h-v_h\|^2m\right\|_{1,1}.
\end{equation}

\smallskip
\noindent
\textbf{Step 3.} We next find an upper bound of $\|m^{*}_h-m_h\|_{\infty,2}$ involving $\epsilon$, using the energy estimate established in Proposition \ref{prop:energy}.
Let $\mu = m^{*}_h - m_h$. Then $\mu$ satisfies the perturbed discrete Fokker-Planck equation defined in \eqref{eq:theta2}:
  \begin{equation*}
       \begin{cases}
       \begin{array}{rl}
       \left( \text{Id} - \theta \sigma\Delta t \Delta_h\right) \mu(t+1)  = & \! \! \! \big( \text{Id}+ (1-\theta )\sigma \Delta t\, \Delta_h  \big) \mu(t) - \Delta t \, \text{div}_h \big(v_h (t)\mu(t) \big)  \\
       &  - \Delta t \, \text{div}_h \big(\delta_v (t)  \big) + \Delta t\, \delta' (t), \\[0.2em] 
      \mu(0)   = & 0 ,
      \end{array}
   \end{cases} \\
   \end{equation*}
where \begin{equation*}
    \delta_v(t,x) = (v^{*}_h-v_h)m^{*}_h(t,x) - \epsilon_3(t,x) \qquad \text{ and } \qquad  \delta' = \mathcal{O}(h^{r+d}).
\end{equation*}
From Theorem \ref{thm:equivalence} we know that $\| v_h \|_{\infty,\infty} \leq M$. Thus, the energy inequality \eqref{eq:energy} implies that there exists a constant $C_4$ such that
\begin{equation*}
           \max_{t\in\tilde{\mathcal{T}}} \big\|\mu(t)\big\|_2^2  \leq C_4   \sum_{\tau\in\mathcal{T}} \Delta t \left(   \big\|  \delta_v(\tau) \big\|^2_2 + \big\| \delta'(\tau) \big\|_{2}^2 \right).
\end{equation*}
Applying inequality $(a+b)^2 \leq 2a^2+ 2b^2$ to $\|\delta_v(\tau)\|_2^2 $, there exists a constant $C_5$ such that
\begin{align*}
   &      \big\| \delta_v(\tau) \big\|^2_2 \leq 2 \big\|  (v^{*}_h-v_h) m^{*}_h (\tau)\big\|^2_2 + 2 \big\| \epsilon_3 (\tau) \big\|^2_2 \leq 2  \big\|  (v^{*}_h-v_h) m^{*}_h (\tau) \big\|^2_2 +C_5 h^{4r+d}, \\[0.5em]
 & \big\|\delta'(\tau) \big\|_{2}^2 \leq C_5 h^{2r+d}.
\end{align*}
Since $\|m^{*}_h\|_{\infty,\infty} = \mathcal{O}(h^d)$, there exists a constant $C_6$ such that
\begin{equation*}
   \sum_{\tau\in\mathcal{T}} \Delta t \left(   \big\|  \delta_v(\tau) \big\|^2_2 + \big\| \delta'(\tau) \big\|_{2}^2 \right) \leq C_6 h^d \Big( \epsilon + h^{2r}\Big).
\end{equation*}
Therefore, for some constant $C_7$,
\begin{equation} \label{eq:delta_m+}
    \|m^{*}_h-m_h\|_{\infty,2}^2 = \|\mu\|_{\infty,2}^2 \leq C_7 h^{d} \big( \epsilon + h^{2r} \big).
\end{equation}

\smallskip
\noindent
\textbf{Step 4.} Let us combine inequality \eqref{eq:epsilon+} with \eqref{eq:delta_m+}. We obtain that
\begin{equation*}
\begin{split}
     \|m^{*}_h-m_h\|_{\infty,2}^2 &\leq C_7(C_3+1)h^{2r+d}  + C_7C_3 \|m^{*}_h-m_h\|_{\infty,2} h^{r+ d/2} \\
     & \leq C_7(C_3+1)h^{2r+d} +  \frac{\|m^{*}_h-m_h\|_{\infty,2}^2 }{2} + \frac{C_7^2C_3^2}{2} h^{2r+d}.
\end{split}
\end{equation*}
Therefore, for some constant $C_8$,
\begin{equation}\label{eq:delta_m_2+}
    \|m^{*}_h-m_h\|_{\infty,2} \leq C_8 h^{r+d/2}.
\end{equation}
Applying H\"older's inequality to \eqref{eq:delta_m_2+} and using \eqref{eq:delta_u+}, we obtain the existence of a constant $C_9$ such that
\begin{equation*}
    \|u_h - u^{*}_h\|_{\infty,\infty} + \|m_h - m^{*}_h\|_{\infty,1} \leq C_9 h^{r}.
\end{equation*}
The conclusion follows.

\appendix

\section{Technical lemmas and proofs}\label{Appendix:A}

\begin{proof}[Proof of Lemma \ref{lm:int_by_part}] \label{proof:int_by_part}
We prove \eqref{eq:int_by_part1}:
\begin{equation*}
\begin{split}
     - \sum_{x\in S} \mu(x) \text{div}_h \omega (x) & = -\sum_{x\in S}\sum_{i=1}^d \mu(x) \frac{\omega_i(x+he_i) - \omega_i(x - he_i) }{2h}\\
     & = - \sum_{x\in S}\sum_{i=1}^d \omega_i(x) \frac{\mu(x-he_i) - \mu(x + he_i) }{2h}= \sum_{x\in S}  \left\langle  \nabla_h {\mu}(x)  , {\omega}(x)\right\rangle.
\end{split}
\end{equation*}
We prove \eqref{eq:int_by_part2}:
\begin{equation*}
\begin{split}
&          -  \sum_{x\in S} \nu (x) \Delta_h \mu (x)
         = - \sum_{x\in S}\sum_{y\in S} \nu(x) \Delta_h(x,y) \mu(y)\\
& \qquad          =  \frac{1}{h^2}\sum_{x\in S} \nu(x) \sum_{i=1}^d \big( 2\mu(x) - (\mu(x+he_i) + \mu(x-he_i)) \big) \\
& \qquad          = \frac{1}{h^2} \sum_{i=1}^d \Big( \sum_{x\in S} \mu(x+he_i) \big( \nu(x+he_i) - \nu(x) \big) - \sum_{x\in S} \mu(x-he_i) \big( \nu(x) - \nu(x- he_i) \big)  \Big) \\
& \qquad           = \sum_{x \in S} \left\langle \nabla^{+}_h \nu(x), \nabla^{+}_h \mu(x)  \right\rangle .
\end{split}
\end{equation*}
The lemma is proved.
\end{proof}

\begin{proof}[Proof of Lemma \ref{lem:easy_ineq}] \label{proof:easy_ineq}
Applying the inequality $(a+b)^2 \leq 2a^2 + 2b^2$, we obtain
     \begin{equation*}
     \begin{split}
         \|\nabla_h \mu\|_2 ^2 & = \frac{1}{4h^2} \sum_{i=1}^d \sum_{x\in S} \big(\mu(x+he_i)- \mu(x-he_i)\big)^2\\
         & \leq \frac{1}{2h^2} \sum_{i=1}^d \sum_{x\in S} \big(\mu(x+he_i)- \mu(x)\big)^2 + \big(\mu(x)- \mu(x - he_i)\big)^2 = \|\nabla_h^{+} \mu\|_2 ^2.
     \end{split}      
     \end{equation*}
     Inequality \eqref{eq:dmu+} follows.
\end{proof}

\begin{proof}[Proof of Lemma \ref{lm:implicit}] \label{proof:implicit}
Let $r = c\Delta t / h^2$. Consider the mapping  $\mathbb{S}_{X}(\mu) \colon \mathbb{R}(S) \rightarrow \mathbb{R}(S)$, defined by
    \begin{equation}\label{eq:map}
    \mathbb{S}_{X}(\mu)(x) = \frac{1}{1+2dr} \Bigg( r\sum_{j=1}^d \mu(x+he_j) + r\sum_{j=1}^d \mu(x-he_j) + X(x) \Bigg). 
    \end{equation}
    Then $Y$ is a solution to \eqref{eq:implicit} if and only if it is a fixed point of $\mathbb{S}_{X}$. For any $\mu_1$ and $\mu_2$ in $\mathbb{R}(S)$ and for any $ x \in S$,
    \begin{equation} \label{eq:lip_sx}
    \begin{split}
         \Big| \mathbb{S}_{X}(\mu_1)(x) - \mathbb{S}_{X}(\mu_2)(x) \Big| & = \frac{1}{1 + 2dr} \, \Bigg| r\sum_{j=1}^d (\mu_1 -\mu_2)(x+he_j) + r\sum_{j=1}^d (\mu_1 - \mu_2)(x-he_j)\Bigg| \\
         & \leq \frac{2dr}{1+2dr} \, \|\mu_1 -\mu_2\|_{\infty}.
    \end{split}
    \end{equation}
    Therefore,  $\mathbb{S}_{X}$ is a contraction for the $\|\cdot\|_{\infty}$ norm. As a consequence, it has a unique fixed point $Y$, which is then the unique solution to \eqref{eq:implicit}. Point (1) is proved.

    Let us prove point (2). Assume that $X\geq 0$. Since $\mathbb{S}_{X}$ is a contraction, we have that $Y = \lim_{n\rightarrow \infty} \mathbb{S}_X^n(\mu)$ for any $\mu \in \R(S)$. In particular, taking $\mu = X$, 
    \begin{equation*}
        Y = \lim_{n\rightarrow \infty} \mathbb{S}_X^n(X).
    \end{equation*}
    It is easy to verify that for any $\mu \in \R(S)$, if $\mu \geq 0$, then $\mathbb{S}_X(\mu) \geq 0$. Therefore, we deduce that $\mathbb{S}_X^n(X) \geq 0$ for any $n$ by induction, and therefore $Y \geq 0$. If we assume that $X\in \mathcal{P}(S)$, then for any $\mu \in \mathcal{P}(S)$, we can deduce that $\mathbb{S}_X(\mu) \in \mathcal{P}(S)$. This yields that $Y\in \mathcal{P}(S)$.
    
    Point (3) is proved similarly, assuming that $X$ is $L$-Lipschitz, observing that if $\mu$ is $L$-Lipschitz continuous, then $\mathbb{S}_X(\mu)$ is $L$-Lipschitz continuous.

    Let us prove the last statement. Taking any $i\in \{1,2,\ldots,d\}$, we define $\bar{\omega}, \, \omega \in \mathbb{R}(S)$ as follows:
    \begin{equation*}
        \bar{\omega} (x) = (\nabla_h Y)_i (x) ,\qquad \omega(x) = (\nabla_h X)_i (x) , \qquad \forall  x\in S.
    \end{equation*}
    Then $\bar{\omega}$ is the fixed point of $ \mathbb{S}_{\omega}$ (replace $X$ by $\omega$ in \eqref{eq:map}).
Let $\gamma = 2dr / (1+2dr)$. Using $\bar{\omega}=\lim_{n\rightarrow \infty} \mathbb{S}_{\omega}^n(\omega) $, we deduce from \eqref{eq:lip_sx} that
\begin{equation*}
\| \bar{\omega}- \omega \|_\infty
\leq \sum_{k=0}^\infty
\| \mathbb{S}_{\omega}^{k+1}(\omega)- \mathbb{S}_{\omega}^k(\omega)  \|_{\infty}
\leq \sum_{k=0}^\infty
\gamma^k \| \mathbb{S}_\omega(\omega) - \omega \|_{\infty}
= \frac{1}{1-\gamma} \| \mathbb{S}_{\omega}(\omega)-\omega \|_{\infty}.
\end{equation*}
It further follows that \begin{equation*}
    \begin{split}
        \|\bar{\omega} - \omega \|_{\infty} & \leq \frac{ \Delta t }{1-\gamma} \frac{c}{1+2dr}
        \big\| \Delta_h \omega  \big\|_{\infty} \\
        &\leq   \frac{\Delta t}{1-\gamma} \frac{c }{1+2dr} \max_{x\in S}
        \Bigg| \frac{\Delta_h X (x+he_i) - \Delta_h X (x - he_i)}{2h} \Bigg| \leq 2^{\alpha-1}cL' \Delta t h^{\alpha-1},
    \end{split}
    \end{equation*}
   where the last inequality is a consequence of the $\alpha$-H\"older continuity of $\Delta_h X$. Finally, let $ \bar{\omega}^{+}_i (x) = (\nabla_h^{+} Y)_i (x)$ and let $\omega^{+}_i(x) = (\nabla_h^{+} X)_i (x)$. By the same argument, we have that $ \|\bar{\omega}^{+}_i - \omega^{+}_i \|_{\infty} \leq 2^{\alpha-1}cL' (\Delta t h^{\alpha-1})$. Then, for any $x\in S$, it follows from the triangle inequality that
   \begin{equation*}
   \begin{split}
       \Big| \Delta_h Y(x) - \Delta_h X(x)\Big| = \Big| \sum_{i=1}^d \frac{\bar{\omega}^{+}_i (x) - \bar{\omega}^{+}_i (x-he_i) }{h}- \sum_{i=1}^d \frac{ {\omega}^{+}_i (x) -  {\omega}^{+}_i (x-he_i) }{h}\Big| = 2^{\alpha}d c L'(\Delta t h^{\alpha-2}).
   \end{split}
   \end{equation*}
The lemma is proved.
\end{proof}

\begin{proof}[Proof of Lemma \ref{lm:l}]
\label{proof:lm:l}
The differentiability of $H^c$ with respect to $p$ and the Lipschitz continuity of $H_p^c$ are proved in \cite[Thm.\@ 4.2.1, page 82]{JBHU}. For any $t_1,t_2 \in[0,1]$, we have
\begin{equation*}
\begin{split}
     H^c(t_1,x,p) - H^c(t_2,x,p) &
     = \sup_{v_1\in \mathbb{R}^d} \Big( \langle -p,v_1 \rangle - \ell^c(t_1,x,v_1) \Big)
     - \sup_{v_2\in \mathbb{R}^d}
\Big(     
     \langle -p,v_2 \rangle - \ell^c(t_1,x,v_2) \Big) \\
     & \leq \sup_{v\in \mathbb{R}^d} \Big( \ell^c(t_2,x,v)-  \ell^c(t_1,x,v) \Big) \\
     & \leq L^c_{\ell} \, |t_1-t_2|.
\end{split}
\end{equation*}
Using the relation of Fenchel, $-H_p^c(t,x,p) = \argmax_{v} \langle - p,v \rangle - \ell^c(t,x,v)$, and the continuous differentiability of $\ell^c$ , we have the first order optimality condition
\begin{equation*}
    p + \ell^c_v\Big(t,x, -H_p^c(t,x,p)\Big) = 0.
\end{equation*}
Fix $x\in \mathbb{T}^d$ and $p\in\mathbb{R}^d$. Take any $t_1$ and $t_2$ in $[0,1]$. By the above equation,
\begin{equation*}
    \ell^c_v\Big(t_1,x, -H_p^c(t_1,x,p)\Big)  = \ell^c_v\Big(t_2,x, -H_p^c(t_2,x,p)\Big).
\end{equation*}
The strong convexity of $\ell^c$ implies that
\begin{equation*}
\begin{split}
    & \alpha \big\| H_p^c(t_1,x,p) -H_p^c(t_2,x,p)   \big\|^2 \\
   & \qquad  \leq \big\langle \ell^c_v\big(t_1,x, -H_p^c(t_1,x,p)\big) - \ell^c_v\big(t_1,x, -H_p^c(t_2,x,p)\big) ,  H_p^c(t_2,x,p) - H_p^c(t_1,x,p) \big\rangle \\
   & \qquad =  \big\langle \ell^c_v\big(t_2,x, -H_p^c(t_2,x,p)\big) - \ell^c_v\big(t_1,x, -H_p^c(t_2,x,p)\big) ,  H_p^c(t_2,x,p) - H_p^c(t_1,x,p) \big\rangle \\
   & \qquad \leq L_{\ell}^c \big|t_1- t_2\big|  \big\| H_p^c(t_1,x,p) -H_p^c(t_2,x,p)   \big\|,
\end{split}
\end{equation*}
where the last inequality is a consequence of the Lipschitz continuity of $\ell_v^c$ with respect to $t$. The lemma is proved.
\end{proof}

\begin{proof}[Proof of Lemma \ref{lem:hD_diff}] \label{proof:hD_diff}
The first three claims can be shown with the same arguments as those of the proof of Lemma \ref{lm:l}. Since $H_p^D$ is $\frac{1}{\alpha}$-Lipschitz continuous with respect to $p$, it is enough to prove \eqref{eq:bound:Hp} for $p=0$. Let $v(t,x)= - H_p^D(t,x,0)$. Since $v(t,x)$ is optimal in \eqref{eq:HD}, with $p=0$, we deduce that $v(t,x)$ minimizes $\ell(t,x,\cdot)$ over the closed ball of radius $D$. Using the strong convexity of $\ell$, it follows that
\begin{equation*}
\ell(t,x,0) + \langle p_0, v(t,x) \rangle
+ \frac{\alpha}{2} \| v(t,x,0) \|^2
\leq
\ell(t,x,v(t,x))
\leq
\ell (t,x,0),
\end{equation*}
from which we deduce that $\| v(t,x) \| \leq \frac{2}{\alpha} \| p_0 \|$, by Cauchy-Schwarz inequality.
\end{proof}

\begin{proof}[Proof of Lemma \ref{lem:hD_quad}] \label{proof:hD_quad}
By Fenchel's relation \cite[Cor.\@ 1.4.4]{JBHU}, we know that
\begin{equation} \label{eq:pridual1}
H^D(t,x,\bar{p}) =
- \langle \bar{p},\bar{v} \rangle
- \ell^D(t,x,\bar{v})
\quad
\text{and}
\quad
- \bar{p} \in \partial_v \ell^D(t,x,\bar{v}).
\end{equation}
Using the strong convexity of $\ell^D$, 
we obtain that
\begin{equation} \label{eq:pridual2}
    \ell^D(t,x,v)  \geq \ell^D(t,x,\bar{v}) - \langle \bar{p}, v- \bar{v} \rangle + \frac{\alpha}{2} \| v - \bar{v} \|^2.
\end{equation}
Summing up \eqref{eq:pridual1} and \eqref{eq:pridual2}, we obtain the following inequality:
\begin{equation}\label{eq:pridual}
H^D(t,x,\bar{p}) + \ell^D(t,x,v) + \langle \bar{p}, v\rangle
\geq
\frac{\alpha}{2} \,
\big\| v - \bar{v} \big\|^2.
\end{equation}
Multiplying \eqref{eq:pridual} by $\bar{m}$, multiplying \eqref{eq:pridual1} by $m$, and taking the difference, we obtain the desired inequality.
\end{proof}

\begin{proof}[Proof of Theorem \ref{thm1}, second part] \label{proof:thm1}
We prove here the continuity of the mapping $\phi$.
Since $\phi$ is the composition of \eqref{eq:DP}, \eqref{eq:OS} and \eqref{eq:kolmogorov}, it suffices to show that these three mappings are continuous.

\smallskip
\noindent
\textbf{Step 1:} Continuity of \textbf{HJB}. Take any  $\mu_1$ and $\mu_2$ in $\mathcal{P}_{m_0}(\tilde{\mathcal{T}},S)$. Let $u_1=\textbf{HJB}(\mu_1)$ and $u_2=\textbf{HJB}(\mu_2)$.
By Assumption \ref{ass3}, we have that for any $x\in S$,
\begin{equation*}
\begin{split}
    |(u_1 -u_2)(t,x)| \leq{} & \sup_{\|\omega\|\leq D} \, \big|\tilde{\ell}_{\mu_1}(t,x,\omega) -\tilde{\ell}_{{\mu_2}}(t,x,\omega)  \big|  \Delta t + \Big| \sum_{y\in S} \pi(t,x,y,\omega ) \big(u_1(t+1,y)-u_2(t+1, y) \big)\Big| \\
     \leq{} & L_f'\|(\mu_1-\mu_2)(t,\cdot)\|_{2}\Delta t + \|(u_1-u_2)(t+1,\cdot)\|_{\infty},
\end{split}
\end{equation*}
where the last inequality follows from the Lipschitz continuity of $f$ and Assumption \ref{ass3}. 
{Since $\mu_1(t,\cdot), \mu_2(t,\cdot) \in \mathcal{P}(S)$ for any $t\in \tilde{\mathcal{T}}$, we have that $\mu_1(t,s), \mu_2(t,s) \in [0,1]$ for any $(t,s)\in \tilde{\mathcal{T}}\times S$, 
 which implies that  $ \|\mu_1-\mu_2\|_{\infty,\infty}\leq 1$.}
Combining this with the fact that $\|(u_2-u_1)(T,\cdot)\|_{\infty}=0$, it follows that
{\begin{equation} \label{eq:reg_u}
    \| u_1- u_2 \|_{\infty,\infty} \leq L_f'\Delta t \sum_{ t\in\mathcal{T}} \|(\mu_1-\mu_2)(t,\cdot)\|_{2} \leq L_f' \|\mu_1 - \mu_2\|_{\infty,2}\leq L_f' \|\mu_1 - \mu_2\|^{1/2}_{\infty,1}.
\end{equation}}

\smallskip
\noindent
\textbf{Step 2}: Continuity of \textbf{V}. Let $v_1=\textbf{V}(u_1)$ and ${v}_2= \textbf{V}(u_2)$. By the equivalent form of \eqref{eq:OS}, we have $v_1(t,x)= -H_p[p_{1,1}](t,x)$ and $v_2(t,x)= - H_p[p_{1,2}](t,x)$,
where
\begin{equation*}
p_{1,1}(t,x) = \sum_{s\in S} \pi_1(t,x,s)u_1(t+1,s)
\quad \text{and} \quad
p_{1,2}(t,x) = \sum_{s\in S} \pi_1(t,x,s)u_2(t+1,s).
\end{equation*}
By the $(1/\alpha)$-Lipschitz continuity of $H_p(t,x,p)$ on $p$, we have for any $(t,x)\in\mathcal{T}\times S$
\begin{equation*}
    \|v_1(t,x) -{v}_2(t,x)\| \leq \frac{1}{\alpha} \|p_{1,1}(t,x) - {p}_{1,2}(t,x)\|\leq \frac{1}{\alpha} \|\pi_1\|_{\infty,\infty,1} \|u_1- {u}_2 \|_{\infty,\infty},
\end{equation*}
 where $\|\pi_1\|_{\infty,\infty,1} = \max_{t,x} \sum_{s} \|\pi_1(t,x,s)\|$.

\smallskip
\noindent
\textbf{Step 3:} Continuity of \textbf{FP}. Let $m_1= \textbf{FP}(v_1)$ and $m_2= \textbf{FP}(v_2)$.
Then,
\begin{equation*}
 \begin{cases}
       (m_1-{m}_2)(t+1,y) = \sum_{x\in S} \pi[v_1](t,x,y) (m_1-{m}_2)(t,x) + \delta_{v_1,{v}_2,{m}_2}(t,y), \quad \\
       (m_1-{m}_2)(0,y) = 0,
 \end{cases}
 \end{equation*}
 where $\delta_{v_1,{v}_2,{m}_2}(t,y) = \Delta t \sum_{x\in S} \pi_1(t,x,y)(v_1-{v}_2)(t,x) {m}_2(t,x)$.
 Since $\pi[v_1]$ is a transition process, we have
 \begin{equation*}
     \| m_1(t+1,\cdot)- {m}_2(t+1,\cdot)\|_1 \leq  \|m_1(t,\cdot)- {m}_2(t,\cdot)\|_1 + \| \delta_{v_1,{v}_2,{m}_2}(t,\cdot)\|_1.
 \end{equation*}
 The second term $\| \delta_{v_1,{v}_2,{m}_2}(t,\cdot)\|_1$ is estimated with H\"older's inequality:
 \begin{equation*}
 \begin{split}
     \| \delta_{v_1,{v}_2,{m}_2}(t,\cdot)\|_1 & {} = \Delta t \Big|\sum_{y\in S, x\in S} \pi_1(t,x,y)(v_1 -{v}_2)(t,x) {m}_2(t,x) \Big| \\
     &\leq \Delta t\| {m}_2 (t,\cdot)\|_1 \| \pi_1\|_{\infty,\infty,1} \|v_1 -{v}_2 \|_{\infty,\infty} = \Delta t \| \pi_1\|_{\infty,\infty,1} \| v_1-{v}_2 \|_{\infty,\infty},
 \end{split}
 \end{equation*}
where the last equality is a consequence of $m_2 \in \mathcal{P}(\tilde{\mathcal{T}},S)$. Therefore, we have
 \begin{equation*}
     \|m_1- {m}_2 \|_{\infty,1}\leq  \|\pi_1\|_{\infty,\infty,1} \|v_1 -{v}_2 \|_{\infty,\infty}.
 \end{equation*}
 The continuity of $\phi$ follows.
\end{proof}

\section{On the regularity of the continuous MFG system} \label{Appendix:B}

Recall that $Q = [0,1]\times \mathbb{T}^d$. For any $R>0$, let $\mathbf{B}_R \coloneqq Q\times B(0,R)$, where $B(0,R)$ is the closed ball in $\mathbb{R}^d$ with center $0$ and radius $R$. Let us refer the reader fto \cite[pages 8 and 51]{krylov2008lectures} for the definitions of the Sobolev space $W^k_p(Q)$ and the anisotropic Sobolev space $W^{1,2}_p(Q)$. For any $\delta\in(0,1)$, we define the local H\"older space
\begin{equation*}
    \mathcal{C}^{\delta/2,\delta,\delta}_{\text{loc}}(Q\times \mathbb{R}^d) = \Big\{ w \in \mathcal{C}(Q\times \mathbb{R}^d) \; \Big \vert \; w\mid_{\mathbf{B}_R} \in \mathcal{C}^{\delta/2,\delta,\delta}(\mathbf{B}_R ),\; \text{for any }R>0 \Big\},
\end{equation*}
where $w\mid_{\mathbf{B}_R}$ is the restriction of $w$ in $\mathbf{B}_R$ and where $\mathcal{C}^{\delta/2,\delta,\delta}(\mathbf{B}_R )$ denotes the sets of functions from  $\mathbf{B}_R$ to $\R$ which are H\"older continuous with respect to their first (resp.\@ second and third) variable with exponent $\delta/2$ (resp.\@ $\delta$).

\begin{ass}\label{ass:regular}
There exist $C>0$ and $0<\bar{r}<1$ such that for all $ (t,x)\in Q$, for all $ v\in \mathbb{R}^d$ and for all $m\in \mathbb{L}^{\infty}(\mathbb{T}^d)$ satisfying $m \geq 0$ and $\int_{\mathbb{T}^d} m(x)dx =1$, it holds:
 \begin{itemize}
     \item $\ell^c(t,x,v)\leq C\|v\|^2 +C $ and $|f^c(t,x,m)|\leq C$;
     \item  $\ell^c$ and $\ell^c_v$ are continuously differentiable, and $\ell^c_{vx}, \ell^c_{vv} \in  \mathcal{C}^{\bar{r}/2,\bar{r},\bar{r}}_{\text{loc}}(Q\times \mathbb{R}^d)$;  
     \item $m_0^c \in \mathcal{C}^{2+ \bar{r}}(\mathbb{T}^d)$, and $  g^c \in \mathcal{C}^{3}(\mathbb{T}^d)$.
 \end{itemize}
\end{ass}

\begin{lem}\label{lm:hamiltonian}
Let Assumptions \ref{ass:continuous} and \ref{ass:regular} hold true. Then the Hamiltonian $H^c$ is continuously differentiable and $H_p^c$ is also continuously differentiable. Moreover, $H^c_{px} \in \mathcal{C}^{\bar{r}/2,\bar{r},\bar{r}}_{\textnormal{loc}}(Q\times \mathbb{R}^d)$ and $H^c_{pp} \in  \mathcal{C}^{\bar{r}/2,\bar{r},\bar{r}}_{\textnormal{loc}}(Q\times \mathbb{R}^d)$.
\end{lem}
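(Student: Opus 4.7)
The plan is to combine Fenchel duality with the implicit function theorem and then propagate regularity through the resulting formulas. To begin, note that Assumption \ref{ass:continuous}(2) implies the coercivity estimate
\begin{equation*}
\ell^c(t,x,v) \geq \ell^c(t,x,0) + \langle \ell^c_v(t,x,0), v \rangle + \tfrac{\alpha^c}{2}\|v\|^2,
\end{equation*}
so the supremum defining $H^c(t,x,p)$ is attained at a unique point $v^\ast(t,x,p)$, characterized by the first-order condition $\ell^c_v(t,x,v^\ast) = -p$. Consider the map $\Phi(t,x,p,v) = \ell^c_v(t,x,v) + p$. By Assumption \ref{ass:regular}, $\Phi$ is continuously differentiable in $(t,x,p,v)$, and its Jacobian with respect to $v$ is $\ell^c_{vv}(t,x,v)$, which is positive definite (hence invertible) by strong convexity. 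The implicit function theorem then produces a $C^1$ function $v^\ast \colon Q \times \R^d \to \R^d$ solving the FOC, with $v^\ast_p = -[\ell^c_{vv}]^{-1}$ and $v^\ast_x = -[\ell^c_{vv}]^{-1}\ell^c_{vx}$, where all arguments are evaluated at $(t,x,v^\ast(t,x,p))$.

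Next I would invoke the envelope theorem: since the maximum in the Fenchel problem is interior and unique, $H^c$ inherits the differentiability of $\ell^c$ in $(t,x,p)$ and $H^c_p(t,x,p) = -v^\ast(t,x,p)$. Differentiating the FOC yields the identities
\begin{equation*}
H^c_{pp}(t,x,p) = \bigl[\ell^c_{vv}\bigr]^{-1}\!\bigl(t,x,v^\ast(t,x,p)\bigr),
\qquad
H^c_{px}(t,x,p) = \bigl[\ell^c_{vv}\bigr]^{-1}\!\ell^c_{vx}\bigl(t,x,v^\ast(t,x,p)\bigr).
\end{equation*}
In particular $H^c_p$ is itself $C^1$, giving the first part of the conclusion.

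It remains to check the Hölder regularity. Fix $R > 0$; I claim $v^\ast$ is locally Lipschitz on $\mathbf{B}_R$. From the estimate \eqref{eq:bound:Hp} (with $D = \infty$), $v^\ast(t,x,p)$ is uniformly bounded on $\mathbf{B}_R$ by some constant $R'$, and its derivatives $v^\ast_t, v^\ast_x, v^\ast_p$ (given by analogous formulas obtained by differentiating the FOC in $t,x,p$) are continuous, hence uniformly bounded on $\mathbf{B}_R$, since $\ell^c_{vv}$ has uniformly positive-definite values on $Q \times B(0,R')$ with Hölder-continuous entries. Consequently, for $(t_i,x_i,p_i) \in \mathbf{B}_R$, $i=1,2$,
\begin{equation*}
|v^\ast(t_1,x_1,p_1) - v^\ast(t_2,x_2,p_2)| \leq C\bigl(|t_1-t_2| + |x_1-x_2| + |p_1-p_2|\bigr).
\end{equation*}
Using this Lipschitz bound, the composition $(t,x,p) \mapsto \ell^c_{vv}(t,x,v^\ast(t,x,p))$ is Hölder of exponent $\bar r/2$ in $t$ and $\bar r$ in $x$ and $p$ (the $t$-variation of $v^\ast$ is bounded by $|t_1-t_2|$, which is dominated by $|t_1-t_2|^{1/2}$ on bounded sets, so the $v$-contribution of order $|t_1-t_2|^{\bar r}$ is controlled by $|t_1-t_2|^{\bar r/2}$). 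The same holds for $\ell^c_{vx}(\cdot,\cdot,v^\ast(\cdot))$.

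The main obstacle is to propagate Hölder regularity through the matrix inversion. This is handled by the standard identity $A^{-1} - B^{-1} = A^{-1}(B - A)B^{-1}$: since the eigenvalues of $\ell^c_{vv}(t,x,v^\ast(t,x,p))$ are bounded below by $\alpha^c$ on $\mathbf{B}_R$, the operator norms of the inverses are bounded by $1/\alpha^c$, and the Hölder seminorm of $[\ell^c_{vv}]^{-1}(t,x,v^\ast(t,x,p))$ is controlled by $(1/\alpha^c)^2$ times that of $\ell^c_{vv}(t,x,v^\ast(t,x,p))$. Hence $H^c_{pp} \in \mathcal{C}^{\bar r/2,\bar r,\bar r}(\mathbf{B}_R)$. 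For $H^c_{px}$, the product of two Hölder matrix fields of the same class remains in that class (again on $\mathbf{B}_R$, using the uniform bounds on each factor). Since $R$ was arbitrary, this yields the claimed local Hölder regularity and completes the proof.
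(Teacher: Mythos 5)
Your proof is correct and follows essentially the same route as the paper: apply the implicit function theorem to the first-order condition $-p=\ell^c_v(t,x,v)$, identify $H^c_p=-v^\ast$, read off $H^c_{px}$ and $H^c_{pp}$ from the derivative formulas, and conclude from the regularity of $\ell^c_{vv}$ and $\ell^c_{vx}$. The only difference is that you spell out the H\"older-propagation step (local Lipschitz bound on $v^\ast$, composition, and the identity $A^{-1}-B^{-1}=A^{-1}(B-A)B^{-1}$ for the matrix inverse), which the paper's proof leaves implicit.
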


\begin{proof}
Fix any $(t_0,x_0,p_0) \in (0,1)\times \mathbb{T}^d \times \mathbb{R}^d$. By the strong convexity of $\ell^c$ w.r.t.\@ $v$, there exists a unique $v_0\in \mathbb{R}^d$ such that
$H^c(t_0,x_0,p_0) = - \langle p_0, v_0\rangle - \ell^c(t_0,x_0,v_0)$.
The first order optimality condition writes
\begin{equation*}
    - p_0 - \ell^c_v(t_0,x_0,v_0) = 0.
\end{equation*}
Since $\ell^c$ is strongly convex, we have that $\ell^c_{vv}(t_0,x_0,v_0)$ is invertible. By the implicit function theorem, there exist a neighborhood $\mathcal{A}$ of $(t_0,x_0,p_0)$ and a function $v(t,x,p)$ from $\mathcal{A}$ to $\mathbb{R}^d$ such that for all $(t,x,p)\in \mathcal{A}$,
\begin{equation} \label{eq:first_order}
    - p - \ell^c_v (t,x, v(t,x,p)) = 0.
\end{equation}
Since $\ell_v^c$ is continuously differentiable, $v(t,x,p)$ is continuously differentiable. Moreover,
\begin{align*}
 v_x(t,x,p) & = \Big(\ell^c_{vv}(t,x,v(t,x,p))\Big)^{-1} \ell^c_{vx}(t,x,v(t,x,p)) ,\\
    v_p(t,x,p) & = \Big(\ell^c_{vv}(t,x,v(t,x,p))\Big)^{-1} .
\end{align*}
By the regularity of $\ell^c_{vv}$ and $\ell^c_{vx}$, we deduce that $v_x, v_p \in \mathcal{C}^{\bar{r}/2,\bar{r},\bar{r}}(\mathcal{A})$. The convexity of $\ell^c$ and the first order optimality condition \eqref{eq:first_order} imply that
\begin{equation*}
    H^c(t,x,p) = - \langle p, v(t,x,p)  \rangle - \ell^c (t,x, v(t,x,p)), \qquad \forall  (t,x,p)\in \mathcal{A}.
\end{equation*}
We deduce that  $H^c\mid_{\mathcal{A}} \in \mathcal{C}^1(\mathcal{A})$ by the regularity of $v$ and $\ell^c$. Differentiating the above equation with respect to $p$ and using \eqref{eq:first_order}, we obtain that $H_p^c(t,x,p) = - v(t,x,p)$, for all $(t,x,p) \in \mathcal{A}$.
Then, deriving $H^c_p$ with respect to $x$ and $p$, we obtain
\begin{equation*}
    H_{px}^c(t,x,p) = - v_x(t,x,p), \qquad H_{pp}^c(t,x,p) = -v_{p}(t,x,p), \qquad \forall  (t,x,p)\in \mathcal{A}.
\end{equation*}
The conclusion follows from the regularity of $v$, $v_x$ and $v_p$.
\end{proof}

\begin{thm}\label{thm:regular}
Under Assumptions \ref{ass:continuous} and \ref{ass:regular}, the continuous system \eqref{eq:mfg} has a unique solution $(u^{*}, v^{*}, m^{*})$ satisfying Assumption \ref{ass:sol+} for any $r<\bar{r}$.
\end{thm}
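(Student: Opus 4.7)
The plan is to establish existence via a fixed-point argument, uniqueness via the Lasry-Lions monotonicity trick, and then to obtain the H\"older regularity of $(u^*,v^*,m^*)$ through a bootstrap argument applied alternately to the (decoupled) HJB and FP equations, each treated as a linear parabolic PDE.

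\textbf{Existence and uniqueness.} For existence, I would apply Schauder's fixed-point theorem to the map $\Phi \colon m \mapsto \tilde{m}$ on a suitable convex compact subset of $\mathcal{C}([0,1];\mathcal{P}(\mathbb{T}^d))$: given $m$, solve the HJB equation with coupling $f^c(\cdot,\cdot,m)$ to obtain $u$, set $v = -H^c_p(\cdot,\cdot,\nabla u)$, and solve the linear FP equation with drift $v$ to get $\tilde{m}$. The boundedness of $f^c$ and $g^c$ combined with parabolic regularization delivers compactness. Uniqueness follows from differentiating $\int_{\mathbb{T}^d}(u_1-u_2)(m_1-m_2)\,dx$ in $t$ and combining the monotonicity of $f^c$ with the $\alpha^c$-strong convexity of $\ell^c$, along the lines of the continuous analogue of Lemma \ref{lemma:unique}.

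\textbf{Regularity bootstrap.} I would first obtain $\|\nabla u^*\|_{L^\infty(Q)} \leq C$. This is the delicate point, since $H^c$ has quadratic growth in $p$; a stochastic control representation of $u^*$ (the optimal feedback being bounded because $\ell^c$ is coercive and $f^c, g^c$ are bounded), or a Bernstein-type argument on $|\nabla u^*|^2$, provides the desired bound. Once $\nabla u^*$ is bounded, the HJB equation has bounded right-hand side $F = f^c(\cdot,\cdot,m^*) - H^c(\cdot,\cdot,\nabla u^*) \in L^\infty(Q)$ and $\mathcal{C}^3$ terminal data, so $L^p$-parabolic estimates from \cite{krylov2008lectures} together with Sobolev embedding give $\nabla u^* \in \mathcal{C}^{\beta/2,\beta}(Q)$ for some $\beta \in (0,1)$. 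Lemma \ref{lm:hamiltonian} and the composition rule then yield $H^c(\cdot,\cdot,\nabla u^*) \in \mathcal{C}^{\delta/2,\delta}(Q)$ with $\delta = \min(\beta,\bar{r})$, and a first pass of Schauder estimates gives $u^* \in \mathcal{C}^{1+\delta/2, 2+\delta}(Q)$. Iterating, now using the boosted H\"older regularity of $\nabla u^*$ and the local H\"older regularity of $H^c_{px}, H^c_{pp}$, produces $u^* \in \mathcal{C}^{1+r/2,2+r}(Q)$ for every $r < \bar{r}$. The formula $v^* = -H^c_p(\cdot,\cdot,\nabla u^*)$ together with the regularity of $H^c_p$ yields $v^* \in \mathcal{C}^r(Q) \cap \mathbb{L}^\infty([0,1];\mathcal{C}^{1+r}(\mathbb{T}^d))$. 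Finally, $m^*$ satisfies a linear parabolic equation with H\"older coefficients involving $v^*$ and $\mathrm{div}\, v^*$ and initial datum $m_0^c \in \mathcal{C}^{2+\bar{r}}$; classical Schauder estimates deliver $m^* \in \mathcal{C}^{1+r/2,2+r}(Q)$.

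\textbf{Main obstacle.} The principal difficulty is the uniform gradient bound for $u^*$: the quadratic growth of the Hamiltonian obstructs a naive application of the maximum principle and requires a dedicated argument. Once this a priori estimate is in hand, the remainder of the proof is a standard bootstrap, each step of which is a direct application of linear parabolic Schauder theory.
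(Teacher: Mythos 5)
Your outline is viable, but it takes a different (and considerably more self-contained) route than the paper. The paper does not prove existence and uniqueness at all: it cites \cite[Thm.~1]{bonnans2021schauder} to obtain, in one stroke, the unique classical solution together with a base regularity $u^*,m^*\in\mathcal{C}^{1+r'/2,2+r'}(Q)$, $v^*,\nabla v^*\in\mathcal{C}^{r'}(Q)$ for some possibly small $r'\in(0,\bar r]$; the entire proof is then the exponent-upgrading bootstrap, carried out by localizing with a cutoff so that the Schauder theorem of \cite[Thm.~5.2]{ladyvzenskaja1988linear} applies on a bounded domain. Your Schauder-fixed-point existence argument and Lasry--Lions uniqueness argument are the standard continuous analogues of Theorem~\ref{thm1} and Lemma~\ref{lemma:unique} and would work, at the price of redoing what the citation buys. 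Two remarks on the bootstrap itself. First, the gradient bound you single out as the main obstacle is not actually delicate in this setting: since $\ell^c(t,\cdot,v)$, $f^c(t,\cdot,m)$ and $g^c$ are Lipschitz in $x$ uniformly (Assumption~\ref{ass:continuous}), a direct comparison of the value function at two points using the same control gives $\|\nabla u^*\|_\infty\le L_\ell^c+L_f^c+L_g^c$, exactly as in Lemma~\ref{lm:lip}; no Bernstein or stochastic representation is needed (see the remark opening the proof of Lemma~\ref{lem:reg_sol}). Second, and more importantly, your exponent bookkeeping glosses over the step that forces the paper's Step~1: to obtain $v^*\in\mathbb{L}^\infty([0,1];\mathcal{C}^{1+r})$ and a $\mathcal{C}^{r/2,r}$ coefficient $\operatorname{div}(v^*)$ for the FP Schauder estimate, one must compose $H^c_{px}$ and $H^c_{pp}$, which are only $\mathcal{C}^{\bar r}$ in their arguments, with $\nabla u^*$; a composition with a $\mathcal{C}^{\gamma}$ function yields only exponent $\bar r\gamma$, so one needs $\nabla u^*\in\mathcal{C}^{r/\bar r}$ in space, not merely $\mathcal{C}^{r}$. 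The paper secures this by differentiating the HJB equation and applying $W^{1,2}_p$ estimates plus Morrey to $u^*_{x_i}$. Your iteration can recover it (after one Schauder pass $\nabla u^*$ is Lipschitz in $x$, hence $\mathcal{C}^{r/\bar r}$), but this must be said explicitly; as written, your claim that one obtains $\delta=\min(\beta,\bar r)$ and then ``iterates'' does not track where the factor $\bar r$ is lost and regained, and that is precisely the point of the restriction $r<\bar r$ in the statement.
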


\begin{proof}
Fixing any $0<r<\bar{r}$, we will prove that Assumption \ref{ass:sol+} is satisfied for $r$. Under Assumptions \ref{ass:continuous} and \ref{ass:regular}, according to \cite[Thm.\@ 1]{bonnans2021schauder}, there exists $r' \in (0,\bar{r}]$ such that the continuous system \eqref{eq:mfg} has a unique classical solution $(u^{*}, v^{*}, m^{*})$ with 
\begin{equation}\label{eq:regular}
    u^{*}, \, m^{*}\in \mathcal{C}^{1+r'/2, 2+r'}(Q),
\quad    
    v^{*} \in \mathcal{C}^{r'}(Q), \quad \text{and} \quad \nabla v^{*} \in \mathcal{C}^{r'}(Q, \mathbb{R}^{d\times d}).
\end{equation}

\smallskip
\noindent
\textbf{Step 1:} Regularity of $\nabla u^{*}$. By \eqref{eq:regular}, we have that $u^{*}\in \mathcal{C}^1(Q)$. This implies that $\nabla u^{*} \in \mathbb{L}^{\infty}(Q)$.   Let $ u^{*}_{x_i}$ be the partial derivative of $u^{*}$ w.r.t.\@ $x_i$. Then, $u^{*}_{x_i} $ is a weak solution of the following linear equation:
\begin{equation*}
    \begin{cases}
    -\partial_t w (t,x) - \sigma \Delta w (t,x) + H^c_p[\nabla u^{*}] \nabla w(t,x) =  \check{f}_0(t,x), \qquad &  (t,x)\in Q \,,
   \\
    w (1,x) = g_{x_i}(x), & x\in \mathbb{T}^d,
    \end{cases}
\end{equation*}
where 
\begin{equation*}
   \check{f}_0(t,x) = D_{x_i} f^c(t,x,m^{*}(t)) - H^c_{x_i}[ \nabla u^{*}] (t,x),
\end{equation*}
where $D_{x_i} $ denotes the weak derivative w.r.t.\@ $x_i$. By the regularity of $H^c_p$, $H^c$, $f^c$, and $u^{*}$, we deduce that $ H^c_p[\nabla u^{*}] \in \mathbb{L}^{\infty}(Q) $ and $ \check{f}_0 \in \mathbb{L}^{\infty}(Q) $. Moreover, the regularity of $g$ implies that $g_{x_i} \in W^2_{\infty}(\mathbb{T}^d)$. Then \cite[Thm.\@ 4]{bonnans2021schauder}  shows that $u^{*}_{x_i}$ is the unique weak solution and $ u^{*}_{x_i} \in W^{1,2}_p (Q) \subset W^1_p(Q)$ for any $p>d+1$. Morrey's inequality \cite[Lem.\@ 4.28]{adams2003sobolev} implies that
\begin{equation*}
    u^{*}_{x_i} \in \mathcal{C}^{\gamma}(Q), \quad \text{ with } \ \gamma = 1 - \frac{d+1}{p}.
\end{equation*}
Taking $p = \frac{d+1}{1-r/\bar{r}}$, we have that $u^{*}_{x_i} \in \mathcal{C}^{r/\bar{r}}(Q)$. The same result follows for $\nabla u^{*}$.

\smallskip
\noindent
\textbf{Step 2:} Regularity of $u^{*}$.
Let $\varphi\in \mathcal{C}^{\infty}(\mathbb{R}^d)$ such that $\varphi (x) = 1$ for $x\in B(0,2\sqrt{d})$ and $\varphi (x) = 0$ for $x\notin \Omega \coloneqq B(0,3\sqrt{d})$. It is straightforward that $B(0,2\sqrt{d})$ contains a neighborhood of $\mathbb{T}^d$. Let us set $Q' = (0,1)\times \Omega$. 

Since $u^{*}$ can be identified to a periodic function over $\mathbb{R}^d$, we define $\Check{u} = u^{*} \varphi $. Then, $\check{u}$ is the solution of the following equation:
\begin{equation*}
    \begin{cases}
    -\partial_t \check{u}(t,x) - \sigma \Delta \check{u}(t,x) =  \check{f}_1(t,x), \qquad &  (t,x)\in Q' \,,
    \\
    \check{u}(t,x) = 0 , & (t,x)\in (0,1)\times \partial \Omega,
    \\
    \check{u}(1,x) = g(x)\varphi(x), & x\in \Omega,
    \end{cases}
\end{equation*}
where 
\begin{equation*}
    \check{f}_1(t,x) =  \varphi(x) \Big(f^c(t,x,m^{*}(t)) - H^c(t,x,\nabla u^{*}(t,x))\Big) - 2\sigma \langle \nabla \varphi(x) , \nabla u^{*}(t,x) \rangle- \sigma u^{*}(t,x) \Delta \varphi(x).
\end{equation*}
By the regularity of $f^c$ and $m^{*}$, we deduce the following: For any $(t_1,x_1), (t_2,x_2) \in Q'$,
\begin{equation*}
\begin{split}
     f^c(t_1,x_1,m^{*}(t_1)) - f^c(t_2,x_2,m^{*}(t_2)) &\leq L_f^c (|t_1-t_2| + \|x_1-x_2\|) + L_f^c \|m^{*}(t_1) - m^{*}(t_2)\|_{\mathbb{L}^2} \\
     & \leq L_f^c (|t_1-t_2| + \|x_1-x_2\|) + L_f^c \|m^{*}(t_1) - m^{*}(t_2)\|_{\mathbb{L}^\infty} \\
     & \leq C(|t_1-t_2|^{r/2} + \|x_1-x_2\|^r),
\end{split}
\end{equation*}
for some constant $C$. Using the regularity properties of $u^{*}$, $\nabla u^{*}$ and $H^c$, we obtain that $\check{f}_1 \in \mathcal{C}^{r/2,r}(Q')$. The final condition lies in $\mathcal{C}^r(\Omega)$ by Assumption \ref{ass:regular}. The boundary conditions satisfying the requirements in \cite[Thm.\@ 5.2]{ladyvzenskaja1988linear}, we deduce that $\check{u} \in \mathcal{C}^{1+r/2,2+r} (\bar{Q'})$, where $\bar{Q'}$ is the closure of $Q'$. By the definition of $\varphi$, we have that $u^{*}(t,x) = \check{u}(t,x)$ for all $(t,x)\in Q$. The regularity of $u^{*}$ follows.

\smallskip
\noindent
\textbf{Step 3:} Regularity of $v^{*}$. By \eqref{eq:mfg} (ii) and the regularity of $H^c$, we have
\begin{align*}
    v^{*}(t,x) & = - H^c_p(t,x,\nabla u^{*}(t,x));\\
    \nabla v^{*} (t,x) & = - H^c_{px} (t,x, \nabla u^{*}(t,x) ) - H^c_{pp} (t,x,\nabla u^{*}(t,x)) D_{xx} u^{*}(t,x).
\end{align*}
Since $H_p^c$ is continuously differentiable and $\nabla u^{*}\in \mathcal{C}^{r/\bar{r}}(Q)$, we deduce that $ v^{*} \in \mathcal{C}^{r/\bar{r}} (Q) \subset \mathcal{C}^{r} (Q) $. By a similar argument, from the regularity of $H^c_{px}, $ and $ H^c_{pp}$ in Lemma \ref{lm:hamiltonian} and the regularity of $ \nabla u^{*}$ and $D_{xx} u^{*}$, we have $\nabla v^{*} \in \mathcal{C}^{r/2,r}(Q)$.

\smallskip
\noindent
\textbf{Step 4:} Regularity of $m^{*}$. Since $m^{*}\in \mathcal{C}^{1+r'/2,2+r'}(Q)$ and $Q$ is bounded, we have $m^{*} \in W^{1,2}_p(Q)$ for any $p>d+2$. By \cite[Lem.\@ 3.3]{ladyvzenskaja1988linear}, it holds that
\begin{equation*}
    m^{*}\in \mathcal{C}^{\gamma/2, \gamma}(Q) \text{ and } \nabla m^{*}\in \mathcal{C}^{\gamma/2, \gamma}(Q), \text{ with } \gamma = 1-\frac{d+2}{p}.
\end{equation*}
Taking $p = \frac{d+2}{1-r}$, it follows that $m^{*} \in \mathcal{C}^{r/2,r}(Q)$ and $ \nabla m^{*} \in \mathcal{C}^{r/2,r}(Q)$.

Let us define $\check{m} = m^{*} \varphi$. Then $\check{m}$ satisfies the following equation:
\begin{equation*}
\begin{cases}
    \partial_t \check{m}(t,x) - \sigma \Delta \check{m}(t,x) + \langle v^{*}, \nabla \check{m} \rangle (t,x) + \text{div} (v^{*}) \check{m}(t,x) =  \check{f}_2(t,x), \qquad &  (t,x)\in Q' \,,
    \\
    \check{m}(t,x) = 0 , & (t,x)\in (0,1)\times \partial \Omega,
    \\
    \check{m}(0,x) = m_0(x)\varphi(x), & x\in \Omega,
\end{cases}
\end{equation*}
where 
\begin{equation*}
    \check{f}_2(t,x) =  - 2\sigma \langle  \nabla \varphi(x), \nabla m^{*}(t,x) \rangle - \sigma m^{*}(t,x) \Delta \varphi(x) + \langle v^{*}(t,x), \nabla\varphi(x) \rangle m^{*}(t,x).
\end{equation*}
From the regularity of $v^{*}, m^{*}$ and $\nabla m^{*}$, we deduce that $\check{f}_2 \in \mathcal{C}^{r/2,r}(Q')$. Combining with the regularity of $v^{*}$, $\nabla v^{*}$ and $m_0\varphi $, we deduce that $ \check{m}\in \mathcal{C}^{1+r/2,2+r}(\bar{Q'})$ by \cite[Thm.\@ 5.2]{ladyvzenskaja1988linear}. Therefore, $m^{*}\in \mathcal{C}^{1+r/2,2+r}(Q)$.
\end{proof}

\section{Construction of a numerical Hamiltonian}\label{Appendix:C}

This section, {as a complementary material to the rest of the article}, is dedicated to the construction of a numerical Hamiltonian satisfying the assumptions of \cite{achdou2016}, in a general framework (see equation \eqref{eq:numericalH}). Our main assumption is the strong convexity of the running cost with respect to the control variable.

Given a vector $q \in \mathbb{R}^{2d}$, we denote
\begin{equation*}
    {}^{\dag}q = (q_1,q_3,\ldots, q_{2d-1}), \quad  q^{\dag}= (q_2,q_4,\ldots, q_{2d}).
\end{equation*}
Following the terminology of \cite{achdou2016}, we call \emph{numerical Hamiltonian} a function $\mathcal{H}\colon [0,1]\times \mathbb{T}^d\times \mathbb{R}^{2d} \to \mathbb{R}$ satisfying the following conditions: For any $(t,x)\in [0,1]\times \mathbb{T}^d$,
\begin{itemize}
    \item[(g1)] [Monotonicity] $\mathcal{H}(t,x,\cdot)$ is nonincreasing w.r.t.\@ ${}^{\dag}q_i$ and nondecreasing w.r.t.\@ $q^{\dag}_i$ for all $i=1,2,\ldots,d$;
    \item[(g2)] [Consistency] For any $q$ such that ${}^{\dag}q= q^{\dag}$, it holds $\mathcal{H}(t,x,q) = H^c(t,x,q^{\dag})$;
    \item[(g3)] [Regularity] $\mathcal{H}(t,x,\cdot)$ is continuously differentiable;
    \item[(g4)] [Convexity] $\mathcal{H}(t,x,\cdot)$ is convex;
    \item[(g5)] There exists positive constants $c_1$, $c_2$, $c_3$ and $c_4$, independent of $(t,x)$, such that for any $q \in \R^{2d}$,
\begin{align}
\langle \mathcal{H}_q(t,x,q), q\rangle - \mathcal{H}(t,x,q) & \geq c_1 \|\mathcal{H}_q(t,x,q)\|^2 - c_2; \label{eq:E1} \\
       \|\mathcal{H}_q(t,x,q)\| & \leq c_3\|q\| + c_4. \label{eq:E2}
\end{align}
\end{itemize}
 
\begin{lem}\label{lm:numericalH}
Consider a function $\mathcal{H}\colon [0,1]\times \mathbb{T}^d\times \mathbb{R}^{2d} \to \mathbb{R}$ satisfying \textnormal{(g3)-(g4)}. Assume that $\mathcal{H}_q$ is uniformly Lipschitz continuous w.r.t.\@ $q$, $\mathcal{H}(t,x,0)$ is bounded from above, and $\mathcal{H}_q(t,x,0)$ is uniformly bounded. Then $\mathcal{H}$ satisfies \textnormal{(g5)}.
\end{lem}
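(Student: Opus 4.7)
The plan is to verify the two inequalities in (g5) separately, the second one being an immediate consequence of the Lipschitz assumption on $\mathcal{H}_q$ and the first one following from the Nesterov-type inequality \eqref{eq:nesterov} applied to $\mathcal{H}(t,x,\cdot)$.

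For \eqref{eq:E2}, let $L$ denote the Lipschitz modulus of $\mathcal{H}_q(t,x,\cdot)$ and let $K_1 = \sup_{(t,x)} \|\mathcal{H}_q(t,x,0)\| < \infty$. Then by the triangle inequality,
\begin{equation*}
\|\mathcal{H}_q(t,x,q)\| \leq \|\mathcal{H}_q(t,x,0)\| + L\|q\| \leq K_1 + L\|q\|,
\end{equation*}
so we can take $c_3=L$ and $c_4=K_1$.

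For \eqref{eq:E1}, the key is to apply \eqref{eq:nesterov} to the function $F = \mathcal{H}(t,x,\cdot)$, which is convex (by (g4)) and has $L$-Lipschitz gradient (by assumption), at the points $p=0$ and $q=q$. This yields
\begin{equation*}
\frac{1}{2L}\bigl\|\mathcal{H}_q(t,x,0) - \mathcal{H}_q(t,x,q)\bigr\|^2 \leq \mathcal{H}(t,x,0) - \mathcal{H}(t,x,q) + \langle \mathcal{H}_q(t,x,q), q\rangle.
\end{equation*}
The main technical step is then to bound the left-hand side below by a positive multiple of $\|\mathcal{H}_q(t,x,q)\|^2$ up to a uniform constant. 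Using the elementary inequality $\|a-b\|^2 \geq \tfrac{1}{2}\|a\|^2 - \|b\|^2$ (obtained by Young's inequality $2\|a\|\|b\| \leq \tfrac{1}{2}\|a\|^2 + 2\|b\|^2$) with $a = \mathcal{H}_q(t,x,q)$ and $b = \mathcal{H}_q(t,x,0)$, we get
\begin{equation*}
\bigl\|\mathcal{H}_q(t,x,0) - \mathcal{H}_q(t,x,q)\bigr\|^2 \geq \tfrac{1}{2}\|\mathcal{H}_q(t,x,q)\|^2 - K_1^2.
\end{equation*}

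Combining the previous two displays, letting $K_2 = \sup_{(t,x)} \mathcal{H}(t,x,0)$ (finite by assumption), and rearranging, we obtain
\begin{equation*}
\langle \mathcal{H}_q(t,x,q), q\rangle - \mathcal{H}(t,x,q) \geq \frac{1}{4L}\|\mathcal{H}_q(t,x,q)\|^2 - \frac{K_1^2}{2L} - K_2,
\end{equation*}
so \eqref{eq:E1} holds with $c_1 = 1/(4L)$ and $c_2 = K_1^2/(2L) + K_2$. There is no serious obstacle here; the only mildly delicate point is the choice of the elementary inequality linking $\|a-b\|^2$ with $\|a\|^2$, and the observation that \eqref{eq:nesterov} is exactly the tool that converts the gap $\langle \mathcal{H}_q(q),q\rangle - \mathcal{H}(q)$ into a quadratic control on $\mathcal{H}_q$, which is otherwise the interpretation of (g5).
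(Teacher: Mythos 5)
Your proof is correct and follows essentially the same route as the paper: apply \eqref{eq:nesterov} to $F=\mathcal{H}(t,x,\cdot)$ at $0$ and $q$, then use the elementary bound $\|a-b\|^2 \geq \tfrac{1}{2}\|a\|^2 - \|b\|^2$ to extract $\tfrac{1}{4L}\|\mathcal{H}_q(t,x,q)\|^2$, absorbing $\mathcal{H}(t,x,0)$ and $\|\mathcal{H}_q(t,x,0)\|^2$ into the constant $c_2$. The constants you obtain ($c_1=1/(4L)$, $c_2=K_1^2/(2L)+K_2$) match the paper's.
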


\begin{proof}
 Let $L$ be the Lipschitz constant of $\mathcal{H}_q(t,x,\cdot)$. Applying inequality \eqref{eq:nesterov}, we obtain that
\begin{equation*}
     \frac{1}{2L} \| \mathcal{H}_q(t,x,0) - \mathcal{H}_q(t,x,q)\|^2 \leq \mathcal{H}(t,x,0) - \mathcal{H}(t,x,q) + \langle \mathcal{H}_q(t,x,q), q\rangle .
\end{equation*}
Applying inequality $\|a-b\|^2\geq \|b\|^2/2 - \|a\|^2 $, we deduce that
\begin{equation*}
     \langle \mathcal{H}_q(t,x,q), q\rangle -\mathcal{H}(t,x,q) \geq \frac{1}{4L}\|\mathcal{H}_q(t,x,q)\|^2 - \frac{1}{2L}\|\mathcal{H}_q(t,x,0)\|^2 -\mathcal{H}(t,x,0).
\end{equation*}
Since $\mathcal{H}(t,x,0)$ is bounded from above and since $\mathcal{H}_q(t,x,0)$ is uniformly bounded, \eqref{eq:E1} is satisfied. Inequality \eqref{eq:E2} is obvious by the uniform Lipschitz continuity of $\mathcal{H}_q$.
\end{proof}

Assume that the running cost $\ell^c(t,x,v)\colon [0,1]\times \mathbb{T}^d \times \mathbb{R}^d \rightarrow \mathbb{R}$ is uniformly $\alpha^c$-convex w.r.t.\@ $v$ with some $\alpha^c>0$. Then, $\ell^c$ can be decomposed as
\begin{equation*}
    \ell^c(t,x,v) = \ell_0^c(t,x,v) + \frac{\alpha^c}{2}\|v\|^2,
\end{equation*}
where $\ell^c_0$ is convex w.r.t.\@ $v$. We propose the following definition for a numerical Hamiltonian:
\begin{equation}\label{eq:numericalH}
    \mathcal{H}(t,x,q) = \sup_{
\begin{subarray}{c}    
    v \in \R^d, \,v \geq 0 \\ u \in \R^d, \, u \leq 0
\end{subarray}    
    } \,
\Big(  -\langle v,{}^{\dag}q \rangle -\langle u, q^{\dag} \rangle - \ell^c_0(t,x, v+u) -\frac{\alpha^c}{2} \big( \| v \|^2 + \|u\|^2 \big) \Big) .
\end{equation}

\begin{thm}\label{thm:numericalH}
  Assume that $\ell^c \colon [0,1]\times \mathbb{T}^d \times \mathbb{R}^d \rightarrow \mathbb{R}$ is $\alpha^c$-convex with respect to its third variable, $\ell^c$ is bounded from below by some constant $c$, and for some $v_0 \in\mathbb{R}^d$, there exists a constant $C(v_0)<+\infty$ such that for all $(t,x)\in [0,1]\times \mathbb{T}^d$, $\ell^c(t,x,v_0) \leq C(v_0)$. Then the function $\mathcal{H}$ defined by \eqref{eq:numericalH} is a numerical Hamiltonian, for the Hamiltonian $H^c$ defined by \eqref{eq:Hc}.
\end{thm}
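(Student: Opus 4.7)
The plan is to verify each of the five conditions (g1)--(g5) in turn, exploiting the $\alpha^c$-strong concavity (in $(v,u)$) of the objective function inside the supremum defining $\mathcal{H}$.

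Condition (g1) (monotonicity) is immediate: the objective depends on $q$ only through the linear terms $-\langle v, {}^{\dag}q\rangle$ and $-\langle u, q^{\dag}\rangle$; since $v \geq 0$ in the feasible set, the objective is pointwise nonincreasing in each coordinate of ${}^{\dag}q$, and since $u \leq 0$, it is pointwise nondecreasing in each coordinate of $q^{\dag}$, and these properties pass to the supremum. For condition (g2) (consistency), set ${}^{\dag}q = q^{\dag} = p$. The objective then depends on $(v,u)$ only through $w := v + u$ and $\|v\|^2 + \|u\|^2$. A coordinatewise calculation shows that
\begin{equation*}
\min \big\{ \|v\|^2 + \|u\|^2 \, : \, v + u = w, \; v \geq 0, \; u \leq 0 \big\} = \|w\|^2,
\end{equation*}
attained at $(v, u) = (w^+, -w^-)$, where $w^{\pm}$ are the componentwise positive and negative parts of $w$. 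Substituting, and using the identity $\ell^c_0(t,x,w) + \frac{\alpha^c}{2}\|w\|^2 = \ell^c(t,x,w)$, reduces the supremum to $\sup_{w \in \R^d} \big( -\langle w, p\rangle - \ell^c(t,x,w) \big) = H^c(t,x,p)$.

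For (g3) (continuous differentiability) and (g4) (convexity), I identify $\mathcal{H}$ as a Fenchel conjugate. Let $\Phi \colon \R^{2d} \to \R \cup \{+\infty\}$ be defined by
\begin{equation*}
\Phi(v,u) = \begin{cases} \ell^c_0(t,x,v+u) + \frac{\alpha^c}{2}(\|v\|^2 + \|u\|^2), & \text{if } v \geq 0 \text{ and } u \leq 0,\\ +\infty, & \text{otherwise.} \end{cases}
\end{equation*}
Then $\Phi$ is proper, lower semicontinuous, and $\alpha^c$-strongly convex thanks to the quadratic term. By construction $\mathcal{H}(t,x,q) = \Phi^*(-{}^{\dag}q, -q^{\dag})$. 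The conjugate of an $\alpha^c$-strongly convex function is convex, of class $\mathcal{C}^1$, and has a $(1/\alpha^c)$-Lipschitz gradient (this is the dual form of inequality \eqref{eq:nesterov}). Since the rearrangement $q \mapsto (-{}^{\dag}q, -q^{\dag})$ is a linear isometry of $\R^{2d}$, properties (g3) and (g4) follow.

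For (g5), I invoke Lemma \ref{lm:numericalH}. Lipschitz continuity of $\mathcal{H}_q$ has just been obtained, so it suffices to bound $\mathcal{H}(t,x,0)$ from above and $\mathcal{H}_q(t,x,0)$ uniformly in $(t,x)$. Using $\ell^c_0 = \ell^c - \frac{\alpha^c}{2}\|\cdot\|^2$, the objective at $q=0$ simplifies to
\begin{equation*}
F_0(v,u) := -\ell^c(t,x,v+u) + \alpha^c \langle v, u\rangle \leq -c,
\end{equation*}
using $\ell^c \geq c$ and the sign condition $\langle v, u\rangle \leq 0$ for $v \geq 0, u \leq 0$; this gives $\mathcal{H}(t,x,0) \leq -c$. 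For the gradient, let $(v^*, u^*)$ denote the unique maximizer of $F_0$ over the feasible set (whose existence follows from the $\alpha^c$-strong concavity). By the envelope theorem, the entries of $\mathcal{H}_q(t,x,0)$ equal, up to sign, those of $(v^*, u^*)$. Applying the strong concavity inequality at the admissible test point $(v_0^+, -v_0^-)$, which sums to $v_0$, yields
\begin{equation*}
-\ell^c(t,x,v_0) + \frac{\alpha^c}{2} \big( \|v^* - v_0^+\|^2 + \|u^* + v_0^-\|^2 \big) \leq F_0(v^*, u^*) \leq -c,
\end{equation*}
and since $\ell^c(t,x,v_0) \leq C(v_0)$, this bounds $\|(v^* - v_0^+, u^* + v_0^-)\|$ uniformly. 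The triangle inequality then yields a uniform bound on $\|(v^*, u^*)\|$, hence on $\|\mathcal{H}_q(t,x,0)\|$, and Lemma \ref{lm:numericalH} completes the proof.

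The main subtle point is the last bound on $\mathcal{H}_q(t,x,0)$: the hypothesis controls $\ell^c$ at only one reference point $v_0$, so the strong concavity inequality must be tested against $(v_0^+, -v_0^-)$ rather than $(0,0)$, exploiting that this pair is simultaneously admissible and sums to $v_0$; without this, one would need an upper bound on $\ell^c(t,x,0)$, which is not assumed.
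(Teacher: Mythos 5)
Your proof is correct and follows essentially the same route as the paper: (g1) by the sign constraints, (g2) by reducing $\|v\|^2+\|u\|^2$ to $\|v+u\|^2$ via the positive/negative-part decomposition, (g3)--(g4) by recognizing $\mathcal{H}$ as the conjugate of an $\alpha^c$-strongly convex function, and (g5) via Lemma \ref{lm:numericalH}. The only cosmetic difference is in the gradient bound for (g5): the paper identifies $\mathcal{H}_q(t,x,0)$ with the positive/negative parts of $\argmin_v \ell^c(t,x,v)$ and bounds that minimizer, whereas you bound the $2d$-dimensional maximizer directly by testing strong concavity at $(v_0^+,-v_0^-)$ --- both arguments use the hypotheses in the same way.
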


\begin{proof}
The condition (g1) can be easily deduced from the nonnegativity and nonpositivity constraints for $v$ and $u$ in \eqref{eq:numericalH}. 

\smallskip
\noindent
\textbf{Step 1:} Proof of (g2). Let us take any $q\in\mathbb{R}^{2d}$, such that ${}^{\dag}q = q^{\dag}$. Then, we deduce that
\begin{equation*}
    \mathcal{H}(t,x,q) = \sup_{v\geq 0,\, u\leq 0}  -\langle v+u, q^{\dag} \rangle - \ell^c_0(t,x, v+u) -\frac{\alpha^c}{2} \big( \|v\|^2 + \|u\|^2 \big).
\end{equation*}
Since $v\geq0$ and $u\leq 0$, we have that $\|v\|^2+\|u\|^2\geq \|u+v\|^2$. Then, \begin{equation*}
    \mathcal{H}(t,x,q) \leq \sup_{v\geq 0,\, u\leq 0}  -\langle v+u, q^{\dag} \rangle - \ell^c_0(t,x, v+u) -\frac{\alpha^c}{2}(\|v+u\|^2) =  H^c(t,x,q^{\dag}).
\end{equation*}
Conversely, take $v^{*} =- H^c_p(t,x,q^{\dag})$, $v^{*}_{+} = \{\max\{0,v_i^{*}\}\}_{i=1,\ldots d}$ and $v^{*}_{-}= \{\min\{0,v_i^{*}\}\}_{i=1,\ldots d}$. We have that $v^{*}_{+}\geq 0$, $v^{*}_{-}\leq 0$, $v^{*} = v^{*}_{+} + v^{*}_{-}$ and $\|v^{*}\|^{2} =\|v^{*}_{+}\|^2 + \|v^{*}_{-}\|^2 $. Thus, by Fenchel's relation, it follows that
\begin{equation*}
    H^c(t,x,q^{\dag}) = -\langle v^{*}_{+} + v^{*}_{-}, q^{\dag} \rangle - \ell^c_0(t,x, v^{*}_{+} + v^{*}_{-}) -\frac{\alpha^c}{2} \big( \|v^{*}_{+}\|^2 +\| v^{*}_{-}\|^2 \big) \leq \mathcal{H}(t,x,q).
\end{equation*}

\smallskip
\noindent
\textbf{Step 2:} Proof of (g3)-(g4). Consider the function $\bar{\ell}^c \colon  [0,1]\times \mathbb{T}^d\times \mathbb{R}^{2d}\to \mathbb{R}$ defined by
\begin{equation*}
    \bar{\ell}^c(t,x,w) = \ell^c_0(t,x,{}^{\dag}w + w^{\dag}) + \frac{\alpha^c}{2}(\|{}^{\dag}w\|^2 + \|w^{\dag}\|^2) + \chi^{+}({}^{\dag}w) +\chi^{-}(w^{\dag}), 
\end{equation*}
where $\chi^{+}(x) = 0$ (resp.\@ $\chi^{-}(x)=0$) if $x\geq 0$ (resp.\@ $x\leq 0$) and infinity otherwise. It is obvious that $\bar{\ell}^c$ is uniformly $\alpha^c$-convex w.r.t.\@ $w$. The definition \eqref{eq:numericalH} implies that
\begin{equation*}
    \mathcal{H}(t,x,q) = (\bar{\ell}^{c})^{*}(t,x,-q).
\end{equation*}
By \cite[Thm.\@ 4.2.1]{JBHU}, $\mathcal{H}$ is convex and continuously differentiable w.r.t.\@ $q$ and $\mathcal{H}_q$ is uniformly $1/\alpha^c$-Lipschitz w.r.t.\@ $q$. 

\smallskip
\noindent
\textbf{Step 3:} Proof of (g5). We apply Lemma \ref{lm:numericalH} for the proof. Taking $q=0$, by the consistency of $\mathcal{H}$, we have for any $(t,x)\in [0,1]\times\mathbb{T}^d$ that 
\begin{equation*}
    \mathcal{H}(t,x,0) = H^c(t,x,0) =  - \inf_{v\in\mathbb{R}^d} \ \Big( \ell^c_0(t,x,v) + \frac{\alpha^c}{2}\|v\|^2 \Big) \leq -c,
\end{equation*}
By Fenchel's relation, it follows that
\begin{equation*}
     - \mathcal{H}_q(t,x,0) = \argmin_{v\geq 0, u\leq 0} \ \Big( \ell^c_0(t,x,v+u) + \frac{\alpha^c}{2}(\|v\|^2+\|u\|^2) \Big).
\end{equation*}
Let us set $v^{*}(t,x) = \argmin_{v\in\mathbb{R}^d} \ell^{c}(t,x,v)$.
By a similar argument to the one of Step 1, we have that ${}^{\dag}\mathcal{H}_q(t,x,0) = - v^{*}(t,x)_{+} $ and $\mathcal{H}_q(t,x,0)^{\dag} =  - v^{*}(t,x)_{-}$. In order to prove that $\mathcal{H}_q(t,x,0)$ is uniformly bounded, it suffices to show the boundedness of $v^{*}(t,x)$. By the strong convexity and boundedness assumptions of $\ell^c$, we deduce that for any $(t,x)\in [0,1]\times\mathbb{T}^d$,
\begin{equation*}
    C(v_0)\geq \ell^c(t,x,v_0) \geq \ell^c(t,x,v^{*}(t,x)) + \frac{\alpha^c}{2}\|v^{*}(t,x)-v_0\|^2 \geq c + \frac{\alpha^c}{2}\|v^{*}(t,x)-v_0\|^2.
\end{equation*}
This implies that $\|v^{*}\|_{\infty}\leq \|v_0\| + \sqrt{2(C(v_0)-c)/\alpha^c}$. The conclusion follows.
\end{proof}

\bibliographystyle{plain} 
\bibliography{biblio.bib} 

\begin{thebibliography}{10}

\bibitem{achdou2014partial}
Y.~Achdou, F.J. Buera, J.-M. Lasry, P.-L. Lions, and B.~Moll.
\newblock Partial differential equation models in macroeconomics.
\newblock {\em Philosophical Transactions of the Royal Society A: Mathematical,
  Physical and Engineering Sciences}, 372(2028):20130397, 2014.

\bibitem{achdou2013mean}
Y.~Achdou, F.~Camilli, and I.~Capuzzo-Dolcetta.
\newblock Mean field games: {C}onvergence of a finite difference method.
\newblock {\em SIAM Journal on Numerical Analysis}, 51(5):2585--2612, 2013.

\bibitem{achdou2010mean}
Y.~Achdou and I.~Capuzzo-Dolcetta.
\newblock Mean field games: {N}umerical methods.
\newblock {\em SIAM Journal on Numerical Analysis}, 48(3):1136--1162, 2010.

\bibitem{achdou2020mean}
Y.~Achdou and M.~Lauri{\`e}re.
\newblock Mean field games and applications: {N}umerical aspects.
\newblock {\em Mean field games}, pages 249--307, 2020.

\bibitem{achdou2016}
Y.~Achdou and A.~Porretta.
\newblock Convergence of a finite difference scheme to weak solutions of the
  system of partial differential equations arising in mean field games.
\newblock {\em SIAM Journal on Numerical Analysis}, 54(1):161--186, 2016.

\bibitem{adams2003sobolev}
R.A. Adams and J.F. Fournier.
\newblock {\em Sobolev spaces}.
\newblock Elsevier, 2003.

\bibitem{allaire2007numerical}
G.~Allaire.
\newblock {\em Numerical analysis and optimization: an introduction to
  mathematical modelling and numerical simulation}.
\newblock OUP Oxford, 2007.

\bibitem{bertucci2022}
C.~Bertucci and A.~Cecchin.
\newblock Mean field games master equations: from discrete to continuous state
  space.
\newblock {\em arXiv preprint arXiv:2207.03191}, 2022.

\bibitem{bonnans2021schauder}
J.F. Bonnans, S.~Hadikhanloo, and L.~Pfeiffer.
\newblock Schauder estimates for a class of potential mean field games of
  controls.
\newblock {\em Applied Mathematics \& Optimization}, 83(3):1431--1464, 2021.

\bibitem{lavigne2023discrete}
J.F. Bonnans, P.~Lavigne, and L.~Pfeiffer.
\newblock Discrete potential mean field games: duality and numerical
  resolution.
\newblock {\em Mathematical Programming}, pages 1--38, 2023.

\bibitem{cardaliaguet2010notes}
P.~Cardaliaguet.
\newblock Notes on mean field games.
\newblock Technical report, Technical report, 2010.

\bibitem{cardaliaguet2017learning}
P.~Cardaliaguet and S.~Hadikhanloo.
\newblock Learning in mean field games: the fictitious play.
\newblock {\em ESAIM: Control, Optimisation and Calculus of Variations},
  23(2):569--591, 2017.

\bibitem{cardaliaguet2018mean}
P.~Cardaliaguet and C.-A. Lehalle.
\newblock Mean field game of controls and an application to trade crowding.
\newblock {\em Mathematics and Financial Economics}, 12(3):335--363, 2018.

\bibitem{carlini2014fully}
E.~Carlini and F.J. Silva.
\newblock A fully discrete semi-lagrangian scheme for a first order mean field
  game problem.
\newblock {\em SIAM Journal on Numerical Analysis}, 52(1):45--67, 2014.

\bibitem{carlini2015semi}
E.~Carlini and F.J. Silva.
\newblock A semi-lagrangian scheme for a degenerate second order mean field
  game system.
\newblock {\em Discrete \& Continuous Dynamical Systems}, 35(9):4269, 2015.

\bibitem{clark1987}
D.S. Clark.
\newblock Short proof of a discrete {G}ronwall inequality.
\newblock {\em Discrete applied mathematics}, 16(3):279--281, 1987.

\bibitem{hadikhanloo2019finite}
S.~Hadikhanloo and F.J. Silva.
\newblock Finite mean field games: fictitious play and convergence to a first
  order continuous mean field game.
\newblock {\em Journal de Math{\'e}matiques Pures et Appliqu{\'e}es},
  132:369--397, 2019.

\bibitem{JBHU}
J.-B. Hiriart-Urruty and C.~Lemar{\'e}chal.
\newblock {\em Convex analysis and minimization algorithms II: advanced theory
  and bundle methods}.
\newblock Springer Verlag, 1993.

\bibitem{huang2006large}
M.~Huang, R.P. Malham{\'e}, and P.E. Caines.
\newblock Large population stochastic dynamic games: closed-loop
  {M}ckean-{V}lasov systems and the {N}ash certainty equivalence principle.
\newblock {\em Communications in Information \& Systems}, 6(3):221--252, 2006.

\bibitem{krylov1996lectures}
N.V. Krylov.
\newblock {\em Lectures on Elliptic and Parabolic Equations in H\"older
  Spaces}, volume~12.
\newblock American Mathematical Soc., 1996.

\bibitem{krylov2008lectures}
N.V. Krylov.
\newblock {\em Lectures on Elliptic and Parabolic Equations in Sobolev Spaces},
  volume~96.
\newblock American Mathematical Soc., 2008.

\bibitem{lachapelle2010computation}
A.~Lachapelle, J.~Salomon, and G.~Turinici.
\newblock Computation of mean field equilibria in economics.
\newblock {\em Mathematical Models and Methods in Applied Sciences},
  20(04):567--588, 2010.

\bibitem{ladyvzenskaja1988linear}
O.A. Lady{\v{z}}enskaja, V.A. Solonnikov, and N.N. Ural'ceva.
\newblock {\em Linear and quasi-linear equations of parabolic type}, volume~23.
\newblock American Mathematical Soc., 1988.

\bibitem{lasry2007mean}
J.-M. Lasry and P.-L. Lions.
\newblock Mean field games.
\newblock {\em Japanese journal of mathematics}, 2(1):229--260, 2007.

\bibitem{lavigne2021}
P.~Lavigne and L.~Pfeiffer.
\newblock Generalized conditional gradient and learning in potential mean field
  games.
\newblock {\em arXiv preprint arXiv:2109.05785}, 2021.

\bibitem{Liions1971}
J.-L. Lions.
\newblock {\em Optimal control of systems governed by partial differential
  equations}.
\newblock Springer Verlag, 1971.

\bibitem{nesterov2018lectures}
Y.~Nesterov.
\newblock {\em Lectures on convex optimization}, volume 137.
\newblock Springer, 2018.

\bibitem{thomas1995numerical}
J.W. Thomas.
\newblock {\em Numerical partial differential equations: finite difference
  methods}, volume~22.
\newblock Springer Science \& Business Media, 1995.

\end{thebibliography}


\end{document}